\documentclass[11pt]{amsart}

\usepackage[english]{babel}

\usepackage[a4paper,top=2cm,bottom=2cm,left=3cm,right=3cm,marginparwidth=1.75cm]{geometry}

\usepackage{amsmath}
\usepackage{graphicx}
\usepackage[colorlinks=true, allcolors=blue]{hyperref}
\usepackage{amsthm}
\usepackage{amssymb}
\usepackage{mathtools}
\usepackage{enumitem}

\newtheorem{thm}{Theorem}[section]
\newtheorem{cor}[thm]{Corollary}
\newtheorem{lem}[thm]{Lemma}
\newtheorem{prop}[thm]{Proposition}
\theoremstyle{definition}

\theoremstyle{remark}
\newtheorem{rem}[thm]{Remark}
\newtheorem{question}[thm]{Question}

\begin{document}

\title[Z2-Thurston norm in Sol manifolds]{$\mathbb{Z}_2$-Thurston norm in Sol manifolds and embeddability of non-orientable surfaces}

\author[Xiaoming Du \and Weibiao Wang]{Xiaoming Du$^1$ \and Weibiao Wang$^{2}$}
\address{$^1$ School of Mathematics, South China University of Technology, Guangzhou, 510640, PR China}
\email{scxmdu@scut.edu.cn}
\address{$^2$ School of Mathematics and Statistics, Central South University, Changsha, 410083, PR China
}
\email{wangweibiao@csu.edu.cn}

\subjclass[2020]{57N35, 57Q35, 57R40, 57K20.}

\keywords{$\mathbb{Z}_2$-Thurston norm, Sol manifold, non-orientable surface, embeddability, curve complex, incompressible surface.}

\begin{abstract}
For every Sol manifold $M$, we determine the $\mathbb{Z}_2$-Thurston norm of every element in $H_2(M;\mathbb{Z}_2)$.
Each Sol manifold is either a torus bundle over the circle or a torus semi-bundle, thus corresponds to a torus map.
We discuss the action of this torus map on a curve complex for the torus, whose edges connect curve classes of intersection number 2. 
For torus bundles over the circle, the $\mathbb{Z}_2$-Thurston norm of any $\mathbb{Z}_2$-homology class equals either zero or the minimum translation distance under the action; and for torus semi-bundles, it equals either zero or the translation distance of a specific curve class.
Moreover, we construct incompressible surfaces to realize all the $\mathbb{Z}_2$-homology classes.
	
As a consequence, for any torus bundle over the circle or torus semi-bundle, we determine which non-orientable closed surfaces can be embedded in it.
\end{abstract}

\maketitle

\section{Introduction}

For a connected closed 3-manifold $M$, let us consider the following question. 

\begin{question}
	\label{question}
	Given a $\mathbb{Z}_2$-homology class $\alpha\in H_2(M;\mathbb{Z}_2)$, how can we find an embedded surface in $M$ that is the ``simplest" one to represent $\alpha$?
\end{question}

It is known that for each $\alpha\in H_2(M;\mathbb{Z}_2)$, there is an embedded closed surface $F$ with $[F]=\alpha$ (see \cite[Section 2]{BW1969}).
Such a surface $F$ may be disconnected.
One can make it connected by performing connected sums, but this will also make $F$ compressible and thus not the ``simplest" in some sense.
How to measure the complexity of $F$?
A possible way is to consider
\begin{displaymath}
	\chi_-(F)=\sum_{i=1}^{n}\max\{0,-\chi(F_i)\},
\end{displaymath}
where $F_1,\cdots,F_n$ are all the components of $F$ and $\chi(\cdot)$ denotes the Euler characteristic.
Jaco, Rubinstein and Tillmann \cite{JRT2013} introduced the \textbf{$\mathbb{Z}_2$-Thurston norm} of $\alpha\in H_2(M;\mathbb{Z}_2)$: 
\begin{displaymath}
	||\alpha||_{\mathbb{Z}_2} =\min \{\chi_-(F): [F]=\alpha\},
\end{displaymath}
which is an analogue of the Thurston norm for elements in $H_2(M;\mathbb{Z})$ \cite{Thurston1986}.
So to answer Question \ref{question} is to find an embedded surface $F$ realizing the $\mathbb{Z}_2$-Thurston norm. 
Moreover, in many cases such a surface can be \textbf{$\mathbb{Z}_2$-taut}, i.e.,  $||[F]||_{\mathbb{Z}_2}=-\chi(F)$ and each component of $F$ is neither a sphere nor a projective plane.

According to Thurston's geometrization, there are eight kinds of geometric 3-manifolds, among which six can be Seifert fibered, one is Sol, and one is hyperbolic.
For orientable, closed, irreducible, Seifert fibered 3-manifolds with orientable base surfaces, a method to compute $\mathbb{Z}_2$-Thurston norms was given in \cite{Du2022}.
This paper is devoted to determining $\mathbb{Z}_2$-Thurston norms for all orientable closed Sol manifolds and finding $\mathbb{Z}_2$-taut surfaces realizing them.

Each orientable closed Sol manifold is either a torus bundle over $S^1$ or a torus semi-bundle \cite[Theorem 5.3]{Scott1983}.
We consider these two kinds of manifolds, including torus bundles over $S^1$ which are non-orientable or admit $\mathbb{E}^3$ or Nil geometry.
If $M$ is such a manifold, we will see that $H_2(M;\mathbb{Z}_2)$ can be expressed as a direct sum $G_0\oplus G_1$, where $G_0$ is generated by a torus or two Klein bottles and thus has a trivial $\mathbb{Z}_2$-Thurston norm; $G_1$ is determined by a torus map.
For any $\alpha\in H_2(M;\mathbb{Z}_2)$ with projection $\alpha_1\in G_1$, the $\mathbb{Z}_2$-Thurston norm $||\alpha||_{\mathbb{Z}_2}$ equals $||\alpha_1||_{\mathbb{Z}_2}$.
In fact, the surfaces realizing $||\alpha||_{\mathbb{Z}_2}$ and $||\alpha_1||_{\mathbb{Z}_2}$ can be converted to each other by a surgery (see Proposition \ref{prop:sum}).
Therefore, the only task is to determine the $\mathbb{Z}_2$-Thurston norms in $G_1$.

If $M$ is a torus bundle over $S^1$, i.e., the mapping torus of a self-homeomorphism $f$ of the torus $T^2$, then $G_0=\mathbb{Z}_2$, generated by a torus fiber.
For a $\mathbb{Z}_2$-taut and incompressible surface $F$ with $[F]\in G_1\backslash\{0\}$, we cut $M$ along a torus fiber. 
Then $F$ becomes an incompressible surface in a thickened torus $T^2\times I$, whose boundary can be assumed to have the form of $(c\times\{0\})\cup(f(c)\times\{1\})$ with $c$ an essential simple closed curve on $T^2$.
According to \cite{Przytycki2019}, such a surface corresponds to the shortest path connecting $c,f(c)$ in the intersection number 2 curve complex $\mathcal{C}^{i=2}(T^2)$, which is a forest of three trees (see Section \ref{sect:complex}).
By considering the action of $f$ on $\mathcal{C}^{i=2}(T^2)$ and computing the minimum distance between $c$ and $f(c)$, we can finally determine the value of  $||[F]||_{\mathbb{Z}_2}$ from the matrix of $f$.
See Theorem \ref{thm:bundle} and Corollary \ref{cor:l} for the conclusion.

If $M$ is a torus semi-bundle, then $M$ can be constructed by gluing two copies of the twisted $I$-bundle over the Klein bottle via a torus map $f'$.
In this case $G_0$ is $\mathbb{Z}_2\oplus\mathbb{Z}_2$, generated by the two Klein bottles.
For a $\mathbb{Z}_2$-taut and incompressible surface $F$ with $[F]\in G_1\backslash\{0\}$, we cut $M$ along two torus fibers. 
Then we get three pieces: a thickened torus $T^2\times I$, and two copies $N_1,N_2$ of the twisted $I$-bundle over the Klein bottle. 
We may assume that $F\cap N_i$ ($i=1,2$) is essential in $N_i$ and $F\cap (T^2\times I)$ is incompressible in $T^2\times I$.
Incompressible surfaces and essential surfaces in irreducible Seifert fibered 3-manifolds are well-studied. 
They are isotopic to pseudo-vertical or pseudo-horizontal surfaces \cite{Frohman1986,Rannard1996,KN2023},
which can be encoded by several parameters \cite{Du2022}.
By analyzing pseudo-vertical surfaces and pseudo-horizontal surfaces in the three pieces, we show that $||[F]||_{\mathbb{Z}_2}$ can be determined by the matrix of $f'$.
See Theorem \ref{thm:semi-bundle} for the conclusion.

As a consequence, we answer the following question for all torus bundles over $S^1$ (including non-orientable ones) and all torus semi-bundles. 
See Theorems \ref{thm:mog}, \ref{thm:meg-mog}, \ref{thm:meg}.
\begin{question}
	For a given 3-manifold, which non-orientable closed surfaces can be embedded in it?
\end{question}

It is known that for any closed 3-manifold $M$, there exists an embedded non-orientable closed surface if and only if $H_2(M;\mathbb{Z}_2)$ is nontrivial \cite[Proposition 2.2]{BW1969}.
But generally it is hard to answer the above question.
Bredon and Wood \cite{BW1969} solved this problem for the following 3-manifolds: lens spaces, $F\times S^1$ with $F$ being any orientable closed surface, and finite connected sums of them.
End \cite{End1992} and Rannard \cite{Rannard1996} gave an answer for $F' \times S^1$ where $F'$ is any non-orientable closed surface.

This paper is organized as follows.
In Section \ref{sect:complex} we introduce the intersection number 2 curve complex $\mathcal{C}^{i=2}(T^2)$.
In Section \ref{sect:incompressible-surface}, we discuss the structure of incompressible surfaces and essential surfaces in the thickened torus, the solid torus, and the twisted $I$-bundle over the Klein bottle.
Then in Section \ref{sect:norm} we determine $\mathbb{Z}_2$-Thurston norms in all torus bundles over $S^1$ and torus semi-bundles (in particular, all Sol manifolds), and construct surfaces realizing them.
Finally in Section \ref{sect:embeddability}, we answer the question of which non-orientable closed surfaces can be embedded in these manifolds.

We work in the category of smooth manifolds.
Some constructions will be presented piecewise linearly, but it is not hard to see that they can all be made smooth.
All intersections of submanifolds are assumed to be transverse.

\textbf{Acknowledgements} Weibiao Wang is supported by the National Natural Science Foundation of China (grant No. 12501089), and the Natural Science Foundation of Hunan Province, China (grant No. 2026JJ60114).

\section{The intersection number 2 curve complex}\label{sect:complex}

Fix two oriented simple closed curves $\lambda$ and $\mu$ on the torus $T^2$ such that they generate the homology group:
\begin{displaymath}
	H_1(T^2;\mathbb{Z})=\mathbb{Z}\langle[\lambda]\rangle\oplus\mathbb{Z}\langle[\mu]\rangle.
\end{displaymath}
An essential simple closed curve $c$ on $T^2$ is said to be a \textbf{$p/q$-curve} if with an arbitrary orientation, the element $[c]\in H_1(T^2; \mathbb{Z})$ represented by $c$ equals $p[\lambda]+q[\mu]$.
Here $p,q$ are coprime integers and $p/q$ is called the \textbf{slope} of $c$.
It is well-known that two essential simple closed curves on $T^2$ are isotopic if and only if they have the same slope (see \cite[\S 2.C]{Rolfsen2003}), and thus there is a 1-1 correspondence:
\begin{displaymath}
	\begin{matrix}
		\mathbb{Q}\cup\{\infty\} &\xleftrightarrow{1-1} &\{\text{isotopy classes of essential simple closed curves on }T^2\}.\\
		p/q & \xleftrightarrow{\quad} & \text{the isotopy class of }p/q\text{-curves}
	\end{matrix}
\end{displaymath}
In this paper they are identified and the isotopy class of $p/q$-curves will be written briefly as $p/q$, where $p,q$ are always assumed to be coprime to each other.

For two isotopy classes of simple closed curves on a surface, their \textbf{geometric intersection number} is the minimum number of intersection points between two representative curves of them.
On $T^2$, the geometric intersection number of the $p/q$-curve class and the $p'/q'$-curve class, denoted by $i(p/q,p'/q')$, equals $|pq'-p'q|$ \cite[\S 1.2.3]{FM2011}.

The \textbf{intersection number 2 curve complex} of $T^2$, denoted by $\mathcal{C}^{i=2}(T^2)$, is the graph defined by the following data.
\begin{itemize}
	\item The vertex set is $\mathbb{Q}\cup\{\infty\}$. 
	That is to say, there is one vertex for each isotopy class of essential simple closed curves on $T^2$.
	\item The edge set is
	\begin{displaymath}
		\{\{p/q,p'/q'\}: p/q,p'/q'\in\mathbb{Q}\cup\{\infty\}, |pq'-p'q|=2\}.
	\end{displaymath}
	That is to say, two vertices are connected by an undirected edge if and only if the corresponding curve classes have a geometric intersection number of $2$.
\end{itemize}
It was shown in \cite[Proposition 2.2]{Przytycki2019} that $\mathcal{C}^{i=2}(T^2)$ has three connected components and they are all trees. 
We denote them by $T_{1/0},T_{0/1},T_{1/1}$, where the vertex set of $T_{j/k}$ ($j/k\in\{1/0,0/1,1/1\}$) is 
\begin{displaymath}
	V_{j/k}=\{p/q\in\mathbb{Q}\cup\{\infty\}: p \equiv j, \, q \equiv k \bmod 2\},
\end{displaymath}
and the edge set of $T_{j/k}$ consists of all edges in $\mathcal{C}^{i=2}(T^2)$ with endpoints in $V_{j/k}$.

If we replace the edge set in the definition of $\mathcal{C}^{i=2}(T^2)$ by 
\begin{displaymath}
	\{\{p/q,p'/q'\}: p/q,p'/q'\in\mathbb{Q}\cup\{\infty\}, |pq'-p'q|=1\},
\end{displaymath}
then we get the intersection number 1 curve complex of $T^2$, denoted by $\mathcal{C}^{i=1}(T^2)$, which may be more familiar to readers.
According to \cite[\S 4.1.1]{FM2011}, $\mathcal{C}^{i=1}(T^2)$ is a Farey graph, which can be combinatorially embedded in $\mathbb{H}^2\cup\partial\mathbb{H}^2$, the hyperbolic plane with the boundary at infinity, and gives the Farey tessellation (see Figure \ref{fig:Farey}, in the disk model of $\mathbb{H}^2$). 
Here the vertex set of $\mathcal{C}^{i=1}(T^2)$ is identified with $\mathbb{Q}\cup\{\infty\}\subset \mathbb{R}\cup\{\infty\}\cong\partial\mathbb{H}^2$, and the edges are realized as geodesics.
Similarly, each component of $\mathcal{C}^{i=2}(T^2)$ can be combinatorially embedded in $\mathbb{H}^2\cup\partial\mathbb{H}^2$. 
Figure \ref{fig:trees} shows $T_{1/0},T_{0/1},T_{1/1}$, in green, blue, and red respectively.

\begin{figure}[htbp]
	\centering
	\includegraphics[scale=0.3]{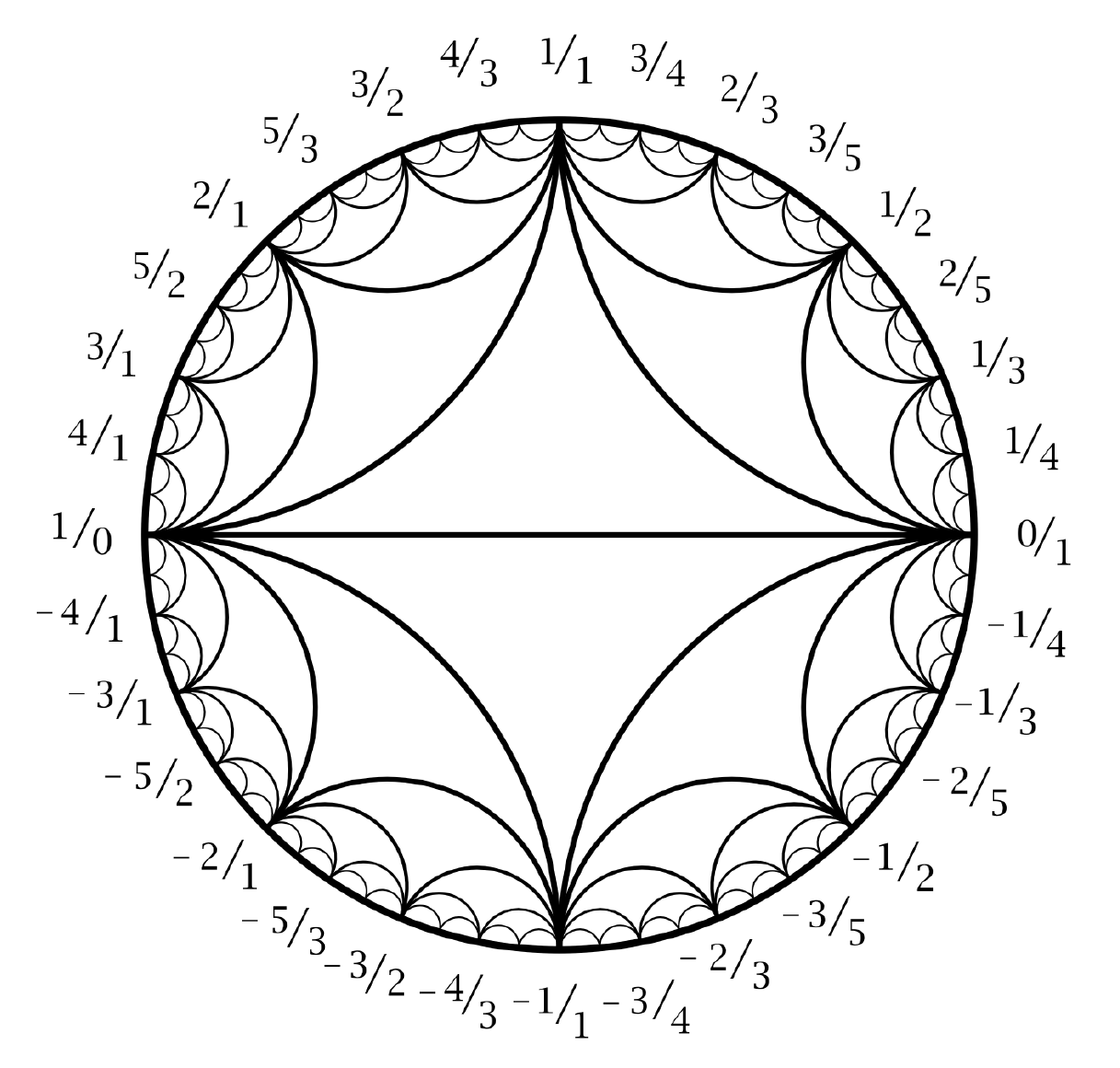}
	\caption{$\mathcal{C}^{i=1}(T^2)$: a Farey graph.}
	\label{fig:Farey}
\end{figure}

\begin{figure}[htbp]
	\centering
	\includegraphics[scale=1.5]{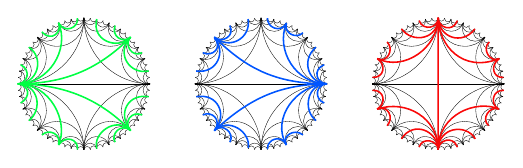}
	\caption{The three components of $\mathcal{C}^{i=2}(T^2)$.}
	\label{fig:trees}
\end{figure}

Denote the group of automorphisms (self-diffeomorphisms) of $T^2$ by ${\rm Aut}(T^2)$.
Each $f\in{\rm Aut}(T^2)$ induces an automorphism $f_*$ of $H_1(T^2;\mathbb{Z})$ with a corresponding matrix in $GL(2,\mathbb{Z})$, denoted by $A_f$: 
\begin{displaymath}
	(\begin{matrix}
		f_*[\lambda] & f_*[\mu]
	\end{matrix})
	=(\begin{matrix}
		[\lambda] &[\mu]
	\end{matrix})\,A_f.
\end{displaymath}
This gives an action of ${\rm Aut}(T^2)$ on $\mathbb{Q}\cup\{\infty\}$ as well as on $\mathcal{C}^{i=2}(T^2)$, which factors through $GL(2,\mathbb{Z})$.
Note that the mapping class group of $T^2$, i.e., ${\rm Aut}(T^2)/$isotopy, is isomorphic to $GL(2,\mathbb{Z})$, via $[f]\mapsto A_f$ (see \cite[\S 2. D]{Rolfsen2003}).

\begin{lem}\label{lem:transitivity}
	The action of ${\rm Aut}(T^2)$ on $\mathcal{C}^{i=2}(T^2)$ is transitive on both the vertex set and the edge set.
\end{lem}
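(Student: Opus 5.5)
The plan is to pass to matrices and work entirely in $GL(2,\mathbb{Z})$. Since the mapping class group of $T^2$ surjects onto $GL(2,\mathbb{Z})$ via $[f]\mapsto A_f$, every matrix $A\in GL(2,\mathbb{Z})$ is realized as $A_f$ for some $f\in{\rm Aut}(T^2)$. Under this correspondence a vertex $p/q$ is carried to $p'/q'$, where $(p',q')^T=A(p,q)^T$. The action preserves the edge relation because $|\det A|=1$: writing $B$ for the matrix with columns $(p,q)^T,(p',q')^T$, we have $|\det(AB)|=|\det B|$, so $|pq'-p'q|$ is unchanged. The action on $\mathcal{C}^{i=2}(T^2)$ is therefore by graph automorphisms, and it suffices to exhibit suitable matrices.

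\emph{Vertex transitivity.} Take any vertex $p/q$ with $\gcd(p,q)=1$. By B\'ezout there are integers $r,s$ with $ps-qr=1$, so
\[
A=\begin{pmatrix} p & r\\ q & s\end{pmatrix}\in SL(2,\mathbb{Z})
\]
sends $1/0$ to $p/q$. Hence every vertex lies in the orbit of $1/0$.

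\emph{Edge transitivity.} By vertex transitivity, every edge can be moved to one of the form $\{1/0,p/q\}$ with $|q|=2$ and $p$ odd. I then use the stabilizer of $1/0$, which contains the matrices
\[
\begin{pmatrix}1&n\\0&1\end{pmatrix}\quad\text{and}\quad\begin{pmatrix}1&0\\0&-1\end{pmatrix}.
\]
The reflection sends $(p,q)$ to $(p,-q)$, so we may assume $q=2$. The shear sends $(p,2)$ to $(p+2n,2)$, which reduces $p$ to $1$. So every edge is equivalent to $\{1/0,1/2\}$, and the action is transitive on edges.

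Two minor points need checking. First, a vertex $p/q$ is the same as $(-p)/(-q)$, so sign ambiguity in the vector $(p,q)$ does no harm. Second, the edges are undirected, so edge transitivity only requires carrying the unordered pair to the unordered pair; the argument above never needs to swap the endpoints.

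The only step that takes any thought is the reduction of $p$ modulo $2$ in the edge case. It works because $p$ is forced to be odd: the vertex $p/q$ has $q=\pm2$ and $\gcd(p,q)=1$.

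It is worth noting a consequence. The action permutes the three trees $T_{1/0},T_{0/1},T_{1/1}$, and this permutation is realized through reduction mod $2$, i.e.\ the map $GL(2,\mathbb{Z})\to GL(2,\mathbb{Z}_2)\cong S_3$. This explains why transitivity holds even across components.
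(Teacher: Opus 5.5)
Your proof is correct and follows essentially the same route as the paper. Both use B\'ezout to move any vertex to a base vertex (you use $1/0$, the paper uses $0/1$), then a shear in that vertex's stabilizer to normalize the other endpoint using its odd coordinate; your reflection step is harmless but redundant, since, as you note yourself, $p/(-2)$ is the same vertex as $(-p)/2$.
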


\begin{proof}
	For any two coprime numbers $p,q$, we can take integers $r,s$ such that $ps-qr=1$.
	Then an automorphism $f$ of $T^2$ with matrix
	\begin{displaymath}
		A_f=\left(
		\begin{matrix}
			r & p \\
			s & q \\
		\end{matrix}
		\right)^{-1}
		=\left(
		\begin{matrix}
			-q & p \\
			s & -r
		\end{matrix}
		\right)
	\end{displaymath}
	takes the $p/q$-curve class to the $0/1$-curve class.
	So the action of ${\rm Aut}(T^2)$ is transitive on the vertex set.
	
	For the transitivity on the edge set, it suffices to show that for any edge $\{p/q,0/1\}$, there exists an $h\in{\rm Aut}(T^2)$ taking it to $\{2/1,0/1\}$. 
	Since $i(p/q,0/1)=|p|=2$, we may assume that $p=2$ and $q$ is odd.
	Let $f'$ be an automorphism of $T^2$ with matrix
	\begin{displaymath}
		A_{f'}=\left(
		\begin{matrix}
			1 & 0 \\
			\frac{1-q}{2} & 1
		\end{matrix}
		\right).
	\end{displaymath}
	Then we have $f'_*(0/1)=0/1$ and $f'_*(p/q)=2/1$.
\end{proof}

\begin{rem}
	Two vertices $p/q,p'/q'$ have geometric intersection number 2 if and only if in the Farey tessellation (Figure \ref{fig:Farey}) there exists an edge $e$ such that $p/q,p'/q'$ and $e$ determine two adjacent ideal triangles.
	In fact, if $i(p/q,p'/q')=2$, then by transitivity of the ${\rm Aut}(T^2)$-action on both $\mathcal{C}^{i=1}(T^2)$ and $\mathcal{C}^{i=2}(T^2)$, we may assume that $p/q=1/1,\,p'/q'=-1/1$ and thus such an edge $e$ exists: the one connecting $1/0$ and $0/1$; conversely, if such an edge $e$ exists, again by transitivity we may assume that $e$ is the one connecting $1/0,0/1$ and thus $p/q,p'/q'$ can only be $1/1,-1/1$.
\end{rem}

The distance between two vertices $p/q$ and $p'/q'$ in $\mathcal{C}^{i=2}(T^2)$ is the minimum number of edges in a path connecting $p/q$ and $p'/q'$, denoted by $d(p/q,p'/q')$.
If there is no path between them, i.e., $p\not\equiv p'$ or $q\not\equiv q' \bmod 2$, then let $d(p/q,p'/q')$ be $\infty$.
Now ${\rm Aut}(T^2)$ and $GL(2,\mathbb{Z})$ act on $\mathcal{C}^{i=2}(T^2)$ by isometries.

Suppose that $p,q\in\mathbb{Z}$ are nonzero and coprime to each other.
Moreover, assume that $p$ is even.
Take a continued fraction of $|p|/|q|$:
\begin{displaymath}
	\frac{|p|}{|q|} =[a_0,a_1,\cdots,a_n]
	=a_0+\cfrac{1}{a_1+\cfrac{1}{\ddots+\cfrac{1}{a_n}}},
\end{displaymath}
where $a_0,a_1,\cdots,a_n$ are integers with $a_0\geq 0$, $a_1,\cdots,a_{n-1}>0$, $a_n>1$.
Let $b_0=a_0$ and for $i\geq 1$ inductively define
\begin{displaymath}
	b_i=\begin{cases}
		a_i, &\text{if }b_{i-1}\neq a_{i-1}\text{ or }\sum\limits_{j=0}^{i-1}b_j\text{ is odd};\\
		0, &\text{if }b_{i-1}=a_{i-1}\text{ and }\sum\limits_{j=0}^{i-1}b_j\text{ is even}.
	\end{cases}
\end{displaymath}
Let 
\begin{displaymath}
	N(p,q)=\frac{1}{2}\sum_{i=0}^{n}b_i.
\end{displaymath}
This was defined by Bredon and Wood \cite{BW1969}. 
They gave the following theorem.

\begin{thm}[{\cite[Theorem 6.1 \& \S 3]{BW1969}}]
	\label{thm:BW1969}
	Suppose that $p,q\in\mathbb{Z}$ are nonzero and coprime to each other.
	A non-orientable closed surface of genus $g$ can be embedded in the lens space $L(p,q)$ if and only if $p$ is even and $g$ equals $N(p,q)+2n$ with $n\in\mathbb{Z}$, $n\geq 0$.
	Particularly, if $p$ is even, $N(p,q)$ is an invariant of $L(p,q)$ with $N(p,q) \equiv p/2 \bmod 2$.
\end{thm}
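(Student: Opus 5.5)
This statement is the classical theorem of Bredon and Wood, quoted from \cite{BW1969}, so in the paper it is cited rather than reproved. If I were to reprove it, I would use a genus-one Heegaard splitting $L(p,q)=V_1\cup V_2$ into two solid tori. The gluing sends the meridian $1/0$ of $V_2$ to the $p/q$-curve on $\partial V_1$, whose meridian is $0/1$ in suitable coordinates.

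\textbf{Existence of a surface.} Since $H_2(L(p,q);\mathbb{Z}_2)\cong H^1(L(p,q);\mathbb{Z}_2)\cong\mathbb{Z}_2/(p)$, a non-orientable surface can exist only if $p$ is even, by \cite[Proposition 2.2]{BW1969}. Conversely, when $p$ is even the nonzero class is represented by an embedded surface. Every closed surface in $L(p,q)$ representing the nonzero class is non-orientable, because $H_2(L(p,q);\mathbb{Z})=0$.

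\textbf{Minimal genus.} I would take a representing surface $F$ of minimal genus. After compressions and discarding spheres and sphere components it is incompressible; compressing reduces genus, and we then add back handles, as discussed below. Put $F$ in normal position with respect to the Heegaard torus $T$. Then $F\cap V_i$ consists of incompressible, boundary-incompressible surfaces in a solid torus: meridian disks and M\"obius-band-like twisted pieces. Their boundaries on $T$ are parallel curves of some slope $r$. The geometric intersection numbers with the two meridians control the pieces.

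The nontrivial $\mathbb{Z}_2$-class forces a single curve of slope $r$ with $i(r,0/1)$ and $i(r,p/q)$ suitable. More precisely, the surface decomposes as a sequence of bands, and this sequence corresponds to a path in the intersection number $2$ curve complex from $0/1$ to $p/q$. Each edge of the path contributes one crosscap. Hence the minimal genus equals the distance $d(0/1,p/q)$ in the tree $T_{0/1}$, which contains $p/q$ because $p$ is even and $q$ is odd. The main step is then to compute this tree distance via continued fractions and show it equals $N(p,q)$. I would proceed by induction on $n$, following the Farey path: the rule for $b_i$ (skipping $a_i$ when the partial sum is even) records when consecutive Farey convergents can be jumped over by an $i=2$ edge. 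I expect this combinatorial identification, together with the realization argument, to be the main obstacle.

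\textbf{Remaining claims.} Larger genera $N(p,q)+2n$ are obtained by adding handles, i.e.\ connected sum with tori in a ball. Genera of the other parity are excluded because $\chi(F)\bmod 2$ is determined by the class: $w_1^3$, equivalently a self-intersection argument, gives $g\equiv p/2 \pmod 2$. Invariance of $N(p,q)$ follows because it is the minimal genus, which is a topological invariant of $L(p,q)$.
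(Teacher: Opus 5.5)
The paper does not prove this statement; it quotes it from Bredon--Wood. Several parts of your outline are fine: the computation of $H_2(L(p,q);\mathbb{Z}_2)$, the fact that closed non-orientable surfaces represent the nonzero class, adding handles, and invariance of the minimal genus. The parity argument via $\chi(F)\equiv\langle\alpha^3,[L(p,q)]\rangle \bmod 2$, with $\alpha^3\neq 0$ iff $p\equiv 2\bmod 4$, is also correct. So is the upper bound: stack one $\Pi_{1,2}$ per edge of a path from $0/1$ to $p/q$ in $\mathcal{C}^{i=2}(T^2)$ and cap with meridian disks, as in Section~\ref{subsect:torus}.

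The genuine gap is the lower bound. Nothing in your sketch shows that a non-orientable surface in $L(p,q)$ has genus at least $d(0/1,p/q)$, and the normal form you propose is wrong:
\begin{itemize}
\item By Proposition~\ref{prop:solid-torus}, the only connected essential surfaces in a solid torus are meridian disks. $F\cap V_1$ and $F\cap V_2$ cannot both be meridian disks, because the meridian slopes $0/1$ and $p/q$ differ.
\item The non-orientable pieces are $\partial$-compressible copies of $\Pi_{g,1}$ of arbitrary genus, not M\"obius-band-like pieces.
\item The sentence ``the surface decomposes as a sequence of bands corresponding to a path, each edge contributing one crosscap'' is exactly the hard content. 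That is Rubinstein's classification in the solid torus, or Theorem~\ref{thm:thickened-torus} in $T^2\times I$. You give no mechanism excluding a surface that uses fewer crosscaps than the tree distance.
\item The $\mathbb{Z}_2$-class does not force a single intersection curve; it only controls parities of intersection numbers.
\end{itemize}

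A workable route is as follows. Choose $F$ incompressible with $|F\cap T|$ minimal in its isotopy class. Then:
\begin{itemize}
\item all curves of $F\cap T$ are essential and of one slope $r$;
\item each $F\cap V_i$ is incompressible in $V_i$, since a compressing disk would let you lower $|F\cap T|$, using irreducibility and $[F]\neq 0$;
\item no $\partial$-parallel annuli remain.
\end{itemize}
The solid-torus classification together with Lemma~\ref{lem:connected} then makes each side either a single meridian disk or a single non-orientable piece with one boundary curve. That piece has genus equal to the tree distance from the meridian slope of $V_i$ to $r$. Hence $g(F)=d(0/1,r)+d(r,p/q)\geq d(0/1,p/q)$.

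Even then you still need $d(0/1,p/q)=N(p,q)$, which you flag as the main obstacle but do not prove. Without it, neither the value $N(p,q)$ nor the congruence $N(p,q)\equiv p/2$ is established. If you import Proposition~\ref{prop:solid-torus} in its $N(2k,q)$ form, or Lemma~\ref{lem:distance}, from the literature, check that their proofs do not themselves rely on Bredon--Wood's computation; otherwise the argument becomes circular.
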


For convenience, let $N(p,q)$ be $\infty$ if $p$ is odd, and let $N(0,\pm 1)=0$.
Then the distance between two vertices in $\mathcal{C}^{i=2}(T^2)$ can be determined as follows.

\begin{lem}[{\cite[Corollary 2.6]{Przytycki2019}}]
	\label{lem:distance}
	For any vertex $p/q\in\mathbb{Q}\cup\{\infty\}$, $d(0/1,p/q)$ equals $N(p,q)$.
\end{lem}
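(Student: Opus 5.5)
We need to show that for every vertex $p/q$, $d(0/1,p/q)=N(p,q)$. First dispose of the degenerate cases. If $p$ is odd, then $p/q\notin V_{0/1}$, so $d=\infty=N(p,q)$ by convention. If $p/q=0/1$, then both sides are $0$. If $q=0$, then $p=\pm1$ is odd. So we may assume $p$ is even and nonzero, $q$ is odd, and by the symmetry $\mathrm{diag}(1,-1)$ (which fixes $0/1$ and sends $p/q\mapsto -p/q$) we may take $p,q>0$.

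The plan is to describe geodesics in the tree $T_{0/1}$ via the Farey tessellation, using the Remark: edges of $T_{0/1}$ correspond to pairs of adjacent Farey triangles, i.e.\ to Farey edges $e$, joining the two vertices opposite $e$. Since $T_{0/1}$ is a tree (\cite[Proposition 2.2]{Przytycki2019}), the distance is the length of the unique reduced path. So it suffices to exhibit an explicit path from $0/1$ to $p/q$ of length $N(p,q)$ without backtracking.

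To build the path, follow the Farey geodesic from $0/1$ to $p/q$. The Farey triangles crossed by the hyperbolic geodesic from $0/1$ to $p/q$ form a strip. This strip is encoded by the continued fraction $[a_0,\dots,a_n]$: it consists of $a_0$ triangles fanning around one vertex, then $a_1$ around the next, and so on, with the pivot vertex alternating sides. Consider the sequence of vertices of this strip that are not on the current "pivot" side. Among them, the even-numerator ones (those in $V_{0/1}$) form a chain in which consecutive members lie opposite a common Farey edge, hence are adjacent in $T_{0/1}$.

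The parity bookkeeping is what produces the $b_i$ rule. Within a fan of $a_i$ triangles around a pivot $u$, the opposite vertices are $v, v+u, v+2u,\dots$ (Farey sums). If $u$ is odd-type relative to the parity class, every second one lies in $V_{0/1}$, giving $a_i/2$ steps. When the parity of the running sum $\sum b_j$ forces a skip, a block contributes $0$, and the neighbouring blocks absorb the step. I would verify by induction on $n$ that the number of $T_{0/1}$-edges used equals $\tfrac12\sum b_i$, using the matrices $\begin{pmatrix}1&a\\0&1\end{pmatrix}$ and $\begin{pmatrix}0&1\\1&0\end{pmatrix}$ to reduce $[a_0,\dots,a_n]$ to a shorter fraction. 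Lemma \ref{lem:transitivity} lets us normalize the first edge in each step.

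The main obstacle is showing the path is reduced, i.e.\ that the vertices visited are distinct. This should follow because each visited vertex lies on a Farey triangle of the strip crossed by the geodesic, and these appear in monotone order along it; a tree path with distinct vertices is the geodesic.

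Alternatively, one can bypass the combinatorics. Theorem \ref{thm:BW1969} gives the minimal non-orientable genus $N(p,q)$ in $L(p,q)$. Przytycki's correspondence identifies paths from $0/1$ to $p/q$ in $\mathcal{C}^{i=2}(T^2)$ with one-sided surfaces in $L(p,q)$, obtained from two solid tori glued along a torus, whose genus equals the path length. Minimality on both sides then yields equality. I would present the direct Farey argument, and keep this second route as a cross-check.
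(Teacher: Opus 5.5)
The paper gives no argument for this lemma beyond quoting \cite[Corollary 2.6]{Przytycki2019}, so your proposal has to stand on its own. The route you say you would present does not yet do so.

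\textbf{The gap in the Farey route.} The entire content of the lemma is the identity between the number of $T_{0/1}$-edges on your path and $\frac12\sum_i b_i$, and you only announce it (``I would verify by induction on $n$''). Nothing in the sketch pins it down. A fan whose pivot lies in $V_{0/1}$ contributes no step at all. The number of steps any other fan contributes depends on the parity classes of its pivot and of the vertex where the path enters it. The fan lengths are also not simply the $a_i$, since the strip from $0/1$ is governed by the expansion of $q/p$. Deriving from this exactly the recursion for $b_i$ is the real work: its dependence on whether $b_{i-1}=a_{i-1}$ and on the parity of $\sum_{j<i}b_j$ has to come out of the count.

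The induction is not set up either. The matrices $\left(\begin{smallmatrix}1&a\\0&1\end{smallmatrix}\right)$ and $\left(\begin{smallmatrix}0&1\\1&0\end{smallmatrix}\right)$ move $0/1$, and they permute the three trees (the first for $a$ odd, the second always). So you would need a strengthened inductive statement for arbitrary base vertices and trees, which you do not formulate.

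Conversely, what you call the main obstacle is easy. Each $T_{0/1}$-edge is dual to a Farey edge with one endpoint in $V_{1/0}$ and one in $V_{1/1}$. Farey edges do not cross, so deleting that tree edge splits $V_{0/1}$ into the two sides of the geodesic. Hence $d(0/1,p/q)$ is exactly the number of such Farey edges crossed by the hyperbolic geodesic from $0/1$ to $p/q$. That reduces the lemma to a count, but the count is what is missing.

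\textbf{The fix: make your second route the proof.} It closes the gap using only results the paper already quotes. However, ``minimality on both sides'' hides the nontrivial direction, which needs the following argument.

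First, capping the $\Pi_{l,2}$ of Section~\ref{subsect:torus} by two meridian disks turns a path of length $l$ into $\Pi_l\subset L(p,q)$. So Theorem~\ref{thm:BW1969} gives $d(0/1,p/q)\ge N(p,q)$.

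For the converse, take $F\cong\Pi_{N(p,q)}\subset L(p,q)$.
\begin{itemize}
\item $F$ is incompressible. Since $F$ is one-sided, $[F]\neq 0$ in $H_2(L(p,q);\mathbb{Z}_2)$. Closed orientable surfaces in $L(p,q)$ separate, since $H_2(L(p,q);\mathbb{Z})=0$, and so are $\mathbb{Z}_2$-null. Hence a compression would leave a non-orientable component of genus less than $N(p,q)$, contradicting Theorem~\ref{thm:BW1969}.
\item Make $F$ transverse to the cores $C_1,C_2$ of the two solid tori, with $|F\cap(C_1\cup C_2)|$ minimal. Delete small tubular neighbourhoods of the cores meeting $F$ in meridian disks. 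The complement is $T^2\times I$, and $F_X=F\cap(T^2\times I)$ is connected, because capping by disks joins no components.
\item $F_X$ is incompressible. Suppose $D$ were a nontrivial compressing disk for $F_X$. Then $\partial D$ bounds a disk $D'\subset F$ that meets the cores. By irreducibility of $L(p,q)$, the sphere $D\cup D'$ bounds a ball. Its interior misses $F$, since a closed non-orientable surface cannot lie in a ball. Pushing $D'$ across the ball onto $D$ then lowers $|F\cap(C_1\cup C_2)|$, contradicting minimality.
\item So $F_X$ is a connected incompressible non-orientable surface bounded by a $0/1$-curve and a $p/q$-curve. Theorem~\ref{thm:thickened-torus}(4) says its genus, which is the genus of $F$, equals $d(0/1,p/q)$.
\end{itemize}
Thus $N(p,q)=d(0/1,p/q)$, and this direction alone already gives the equality.
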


\begin{cor}
	\label{cor:distance}
	For two vertices $p/q,p'/q'\in\mathbb{Q}\cup\{\infty\}$, the distance $d(p/q,p'/q')$ equals $N(pq'-qp',p's-q'r)$, where $r,s$ are integers satisfying $ps-qr=1$.
	In particular, $d(1/0,p/q)=N(q,p)$, and $d(1/1,p/q)=N(q-p,p)$.
\end{cor}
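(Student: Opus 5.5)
The idea is to move $p/q$ to $0/1$ with an isometry of $\mathcal{C}^{i=2}(T^2)$ and then read off the distance from Lemma \ref{lem:distance}. As in the proof of Lemma \ref{lem:transitivity}, I take integers $r,s$ with $ps-qr=1$ and let $f$ be an automorphism of $T^2$ with
\begin{displaymath}
	A_f=\left(\begin{matrix} -q & p\\ s & -r\end{matrix}\right).
\end{displaymath}
The action of $A_f$ on slopes is $x/y\mapsto (-qx+py)/(sx-ry)$, with $\lambda,\mu$ coordinates $(x,y)$. Checking that $A_f$ sends $(p,q)$ to $(0,1)$ will fix the exact form of the action.

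Since $\mathrm{Aut}(T^2)$ acts on $\mathcal{C}^{i=2}(T^2)$ by isometries, and a vertex with no path to it is sent to one with no path to it, we get $d(p/q,p'/q')=d(0/1, f_*(p'/q'))$. The coordinates of $f_*(p'/q')$ are $(-qp'+pq',\ sp'-rq')$. These are coprime, because $A_f\in GL(2,\mathbb{Z})$.

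Lemma \ref{lem:distance} then gives $d = N(pq'-qp',\, p's-q'r)$. Two conventions need a check here.

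\textbf{Slope versus pair.} The lemma uses the pair $(p,q)$ representing a slope, and $(x,y)$ and $(-x,-y)$ give the same slope. So I will note that $N(x,y)=N(-x,-y)$. This holds because $N$ is defined through $|p|/|q|$, together with the conventions $N(0,\pm1)=0$ and $N=\infty$ for odd first entry.

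\textbf{Degenerate cases.} When the second coordinate $p's-q'r$ vanishes, the image slope is $1/0$. Its first coordinate is then $\pm1$, which is odd, so $N=\infty$, matching $d(0/1,1/0)=\infty$. When the first coordinate vanishes the distance is $0$. Both cases are consistent with the conventions, and so is the case of a first coordinate that is odd.

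For the special cases:
\begin{itemize}
	\item For $p/q=1/0$, take $r=-1$, $s=0$, so that $ps-qr=0+0\cdot1$... Since the $q$ in $ps-qr$ is the denominator $0$, we instead need $1\cdot s-0\cdot r=1$, so $s=1$ with $r$ arbitrary; choose $r=0$. The formula then gives $N(1\cdot q'-0,\ p'\cdot1-0)=N(q',p')$, i.e.\ $d(1/0,p/q)=N(q,p)$ after renaming.
	\item For $1/1$, we need $s-r=1$; take $s=1$, $r=0$. The formula gives $N(q'-p',p')$.
\end{itemize}

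The main obstacle is only bookkeeping: getting the matrix action on slopes right, so that the image is indeed $(pq'-qp',\,p's-q'r)$ and not its transpose, and handling the sign ambiguity and the degenerate cases against the conventions for $N$.
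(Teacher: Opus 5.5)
Your proof is correct and is essentially the paper's argument: the same matrix $A_f$ sending $p/q$ to $0/1$, followed by Lemma \ref{lem:distance} applied to the image $(pq'-qp')/(p's-q'r)$. The only difference is that you handle the case of different components through isometry invariance of $d$ together with the convention $N=\infty$ for an odd first entry, whereas the paper checks separately that $pq'-qp'$ is odd in that case; you should also delete the aborted choice $r=-1$, $s=0$ in your $1/0$ computation.
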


\begin{proof}
	If $p\not\equiv p'$ or $q\not\equiv q'\bmod 2$, then $pq'\not\equiv qp'\bmod 2$, and hence $N(pq'-qp',p's-q'r)=\infty=d(p/q,p'/q')$.
	
	If $p\equiv p'$ and $q\equiv q'\bmod 2$, let $f$ be an automorphism of $T^2$ with matrix
	\begin{displaymath}
		A_f=\left(
		\begin{matrix}
			-q & p \\
			s & -r \\
		\end{matrix}
		\right).
	\end{displaymath}
	Then $f(p/q)=0/1$ and $f(p'/q')=(pq'-qp')/(p's-q'r)$.
	So 
	\begin{displaymath}
		d(p/q,p'/q')=d(f(p/q),f(p'/q'))=N(pq'-qp',p's-q'r). \qedhere
	\end{displaymath}
\end{proof}

\section{Incompressible surfaces}
\label{sect:incompressible-surface}

\subsection{Basic concepts}

Here are some basic concepts in the study of embedded surfaces in 3-manifolds, which can be found in 
\cite[pp. 5, 13, 16, 18 -- 19]{Hatcher2023}.
Suppose that $M$ is a 3-manifold and $F\subset M$ is a compact surface (not necessarily connected or orientable) which is \textbf{properly embedded}, i.e., satisfying $F\cap \partial M=\partial F$.
\begin{itemize}
	\item A \textbf{compressing disk} for $F$ is an embedded disk $D$ in $M$ with $D\cap F=\partial D$.
	Moreover, if $\partial D$ can not bound a disk in $F$, then $D$ is called a \textbf{nontrivial} compressing disk.
	If $F$ has no nontrivial compressing disk, and each component of $F$ is neither a sphere nor a disk, then $F$ is said to be \textbf{geometrically incompressible}, or briefly \textbf{incompressible} in this paper.
	If $F$ has a nontrivial compressing disk $D$, then one can \textbf{compress $F$ along $D$}, i.e., take a closed tubular neighborhood $N(D)\cong D\times [-1,1]$ of $D$ with $N(D)\cap F=\partial D\times[-1,1]$, and then get a new surface $(F-N(D))\cup(D\times\{\pm 1\})$.
	
	\item A \textbf{$\partial$-compressing disk} for $F$ is an embedded disk $D$ in $M$ such that $\partial D \cap F$ is an arc $\alpha$ in $\partial D$ and $\partial D-\alpha \subset \partial M$.
	Moreover, if there is no embedded disk $D' \subset F$ with $\alpha \subset \partial D'$ and $\partial D' - \alpha \subset \partial M$, then $D$ is called a \textbf{nontrivial} $\partial$-compressing disk.
	If $F$ has no nontrivial $\partial$-compressing disk, then $F$ is said to be \textbf{$\partial$-incompressible}.
	If $F$ has a nontrivial $\partial$-compressing disk $D$, then one can \textbf{$\partial$-compress $F$ along $D$}, i.e., take a closed tubular neighborhood $N(D)\cong D\times [-1,1]$ of $D$ with $N(D)\cap F=\alpha\times[-1,1]$, $N(D)\cap \partial M=\overline{\partial D-\alpha}\times [-1,1]$, and then get a new surface $(F-N(D))\cup(D\times\{\pm 1\})$.
\end{itemize}
Moreover, $F$ is \textbf{$\partial$-parallel} if it is isotopic, fixing $\partial F$, to a subsurface of $\partial M$, or equivalently, $F$ splits off an embedded $F \times I$ in $M$ with $F=F\times\{0\}$ (in this case $F$ is said to be \textbf{parallel} to the subsurface $\partial F\times I\cup F\times\{1\}$ of $\partial M$).
If $F$ is either incompressible and $\partial$-incompressible, or a sphere not bounding an embedded 3-ball in $M$, or a disk that is not $\partial$-parallel, then we call $F$ an \textbf{essential} surface.

\begin{lem}
	\label{lem:component}
	For a properly embedded surface $F$ in a 3-manifold $M$, $F$ is incompressible if and only if each component of $F$ is incompressible; $F$ is $\partial$-incompressible if and only if each component is $\partial$-incompressible; $F$ is essential if and only if each component is essential.
\end{lem}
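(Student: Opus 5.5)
The plan is to treat the three assertions separately. Each one follows from a local-to-global principle: a (nontrivial) compressing or $\partial$-compressing disk for $F$ can be taken to meet only a single component of $F$. Throughout, write $F=F_1\sqcup\cdots\sqcup F_n$ for the components.

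\textbf{Incompressibility.} First suppose $D$ is a nontrivial compressing disk for some component $F_i$, with $D\cap F_i=\partial D$. The interior of $D$ may still meet the other components $F_j$, $j\neq i$, in closed curves, so I will remove these curves by the standard innermost-disk argument. Take a curve of $D\cap(F-F_i)$ that is innermost on $D$. It bounds a subdisk $D'\subset D$ with $D'\cap F=\partial D'\subset F_j$.
\begin{enumerate}
\item If $\partial D'$ does not bound a disk in $F_j$, then $D'$ is a nontrivial compressing disk for $F$, and we stop.
\item Otherwise $\partial D'$ bounds a disk $E\subset F_j$. Here I will use that $M$ is irreducible, or at least that the sphere $D'\cup E$ bounds a ball; without this one only gets the statement for the nontrivial compressing disks actually produced. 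Swap $D'$ for $E$ and push slightly off $F_j$. This lowers the number of intersection curves.
\end{enumerate}
If no case (1) ever occurs, we end with a disk meeting $F$ only in $\partial D$, which is a nontrivial compressing disk for $F$.

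I expect this step to be the main obstacle, since it seems to require a ball hypothesis that the statement does not mention. A cleaner route avoids irreducibility altogether: \emph{do not swap}. Either some innermost curve is essential in its component, which gives a nontrivial compressing disk for $F$ directly, or every curve of $D\cap F$ in the interior of $D$ is inessential in $F$. In the second case, take $D$ itself and view it as a compressing disk for $F$ after cut-and-paste with disks in $F$, which are disjoint from $F_i$. The only condition needed is that $\partial D$ remains nontrivial in $F_i$, and it does, because the modifications only touch the interior of $D$. In the paper's setting, where nontriviality is measured on $F$ and the surgery happens away from $\partial D$, I would write out this second route.

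For the converse direction, note that any compressing disk $D$ for $F$ has boundary $\partial D$ in a single component $F_i$, because $\partial D$ is connected. Triviality in $F$ is the same as triviality in $F_i$. Hence a nontrivial compressing disk for $F$ is also a nontrivial compressing disk for $F_i$ once we check that $D\cap F_i=\partial D$, which holds because $D\cap F=\partial D$. The condition that no component is a sphere or a disk is plainly componentwise.

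\textbf{$\partial$-incompressibility.} The same reasoning applies with arcs. A $\partial$-compressing disk $D$ for $F$ has $\alpha=\partial D\cap F$ connected, so $\alpha$ lies in one component $F_i$, and nontriviality for $F$ is the same as nontriviality for $F_i$. In the other direction, a $\partial$-compressing disk for $F_i$ can meet other components in closed curves and in arcs with endpoints on $\partial D-\alpha\subset\partial M$. I will remove these by outermost-arc and innermost-curve arguments. At each stage we either obtain a nontrivial ($\partial$-)compressing disk for $F$, or we reduce the intersection number.

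\textbf{Essentiality.} Finally, essentiality is defined componentwise for spheres and disks. For the remaining components it reduces to the two previous parts. The one point to check is that a disk component which is not $\partial$-parallel, or a sphere component not bounding a ball, keeps that property when viewed inside the whole surface $F$. This holds because both conditions refer only to $M$ and the component itself.
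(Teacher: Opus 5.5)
For incompressibility, the route you say you would write out is the flawed one. The route you set aside is correct and is exactly the paper's argument.

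Replacing the innermost subdisk $D'$ by the disk $E\subset F_j$ bounded by $\partial D'$ gives a new disk $(D-D')\cup E$ with the same boundary $\partial D$. It need not be isotopic to $D$, so no ball and no irreducibility are needed, and your ``main obstacle'' does not exist.

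Your ``do not swap'' route rests on a false dichotomy. All innermost curves of $D\cap F$ can be inessential while a non-innermost curve is essential in its component. So you must surger and iterate, and the cut-and-paste you then invoke is the swap itself.

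There is one detail you should add, which the paper's ``little perturbation'' also skips. $E$ may contain other curves of $D\cap F_j$, so $(D-D')\cup E$ need not be embedded. Instead, take a curve of $D\cap E$ innermost in $E$ and replace the subdisk of $D$ it bounds.

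The genuine gap is in the $\partial$-incompressibility part, in the direction ``$F$ $\partial$-incompressible $\Rightarrow$ each component $\partial$-incompressible''. Suppose an innermost closed curve of $D\cap F_j$ is essential in $F_j$. Then you obtain a compressing disk for $F$, which contradicts nothing, because $\partial$-incompressibility is defined here without incompressibility. That curve also cannot be removed.

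No repair is possible, because this assertion is false. In the solid torus $M=A\times[0,1]$, with $A$ an annulus with core $c$, let $F_1=A\times\{\frac12\}$. Let $F_2,F_3$ be the boundary tori of thin tubular neighbourhoods $U_2,U_3$ of $c\times\{\frac14\}$ and $c\times\{\frac34\}$.
\begin{itemize}
\item For a spanning arc $\gamma$ of $A$, the disk $\gamma\times[0,\frac12]$ is a nontrivial $\partial$-compressing disk for $F_1$.
\item Let $D$ be a $\partial$-compressing disk for $F=F_1\cup F_2\cup F_3$. Its arc $\alpha$ lies in $F_1$, since $F_2,F_3$ miss $\partial M$. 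Because its interior misses $F$, $D$ lies in $A\times[0,\frac12]-{\rm Int}\,U_2$ or in $A\times[\frac12,1]-{\rm Int}\,U_3$. Each of these is $\cong T^2\times I$, and $\partial D$ sits on a boundary torus that is incompressible there.
\item Hence $\partial D$ is inessential on that torus and has algebraic intersection $0$ with the core of $F_1$. The rest of $\partial D$ lies in $\partial M$ and misses this core, while a spanning arc would meet it once. So $\alpha$ is not spanning, it cuts off a disk from $F_1$, and $D$ is trivial.
\end{itemize}
Thus $F$ is $\partial$-incompressible while $F_1$ is not.

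What your argument, and the paper's ``similar'', actually proves is weaker: if $F$ is incompressible \emph{and} $\partial$-incompressible, then every component is $\partial$-incompressible. This is what the essentiality assertion needs, and it is all the paper uses later.

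Finally, consider the definitions as written. A disconnected $F$ with a sphere or disk component is never incompressible, hence never essential. So treating essentiality as ``componentwise for spheres and disks'' changes the definition. The third equivalence should be stated for surfaces without such components.
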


\begin{proof}
    We only give a proof for incompressibility. 
	For $\partial$-incompressibility and essentiality, the proofs are similar.
    
	One direction is obvious: if each component of $F$ is incompressible, then $F$ has no nontrivial compressing disk either and thus is also incompressible.
	For the other direction we use proof by contradiction. 
	Assume that $F=F_1\cup F_2\cup \cdots \cup F_n$ is incompressible but one component $F_1$ has a nontrivial compressing disk $D_1$ in $M$.
	Then $D_1$ must intersect $F_2\cup\cdots\cup F_n$, for otherwise $D_1$ becomes a nontrivial compressing disk for $F$.
	We can choose $D_1$ so that the number of components of $D_1\cap(F_2\cup\cdots\cup F_n)$ is minimized.
	Take an innermost component $C_0$ of $D_1\cap(F_2\cup\cdots\cup F_n)$, which bounds a disk $D_0\subset D_1$ with ${\rm Int}(D_0)\cap F=\emptyset$.
	Since $F$ is incompressible, $C_0$ must bound a disk $D_0'$ in $F$.
	Then we can replace $D_1$ by $(D_1-D_0)\cup D_0'$ with a little perturbation to eliminate $C_0$ from $D_1\cap(F_2\cup\cdots\cup F_n)$.
	This is a contradiction to our assumption that the number of its components is minimized.
\end{proof}

\subsection{Incompressible surfaces in the thickened torus and the solid torus}
\label{subsect:torus}

Incompressible surfaces in the thickened torus $T^2\times I$ are classified by Przytycki:

\begin{thm}[{\cite[Theorem 2.3]{Przytycki2019}}]
	\label{thm:thickened-torus}
	If $F$ is an incompressible connected surface in $T^2\times I$, then it is isotopic to one of the following surfaces:
	\begin{enumerate}
		\item a $\partial$-parallel torus;
		\item a $\partial$-parallel annulus;
		\item a vertical annulus $c\times I$ with $c$ an essential simple closed curve on $T^2$;
		\item a non-orientable surface bounded by two essential simple closed curves $c_0\subset T^2\times\{0\}$, $c_1\subset T^2\times\{1\}$ of different slopes $p_0/q_0, p_1/q_1$, whose genus equals $d(p_0/q_0,p_1/q_1)$.
	\end{enumerate}
\end{thm}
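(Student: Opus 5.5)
The statement is a direct citation of \cite[Theorem 2.3]{Przytycki2019}, so the paper may simply invoke it. If I were to reprove it, I would proceed as follows.

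Let $F\subset T^2\times I$ be connected and incompressible. The first step is to normalize the boundary. Any component of $\partial F$ that is inessential in $T^2\times\{0,1\}$ bounds a disk there. Taking an innermost such disk and pushing it slightly into the interior gives a compressing disk; its boundary must bound a disk in $F$, and since $F$ is connected and not a disk, this is impossible. Hence every boundary curve is essential. Components of $\partial F$ on one boundary torus are disjoint, so they are parallel with a common slope.

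\textbf{Closed case.} A closed non-orientable surface cannot embed in $T^2\times I$, because $T^2\times I$ embeds in $S^3$, and $S^3$ contains no closed one-sided surfaces since $H_2(S^3;\mathbb{Z}_2)=0$. So $F$ is an orientable incompressible closed surface. For orientable surfaces geometric incompressibility implies $\pi_1$-injectivity, so Waldhausen's classification in $T^2\times I$ (or the Seifert fibered structure) shows $F$ is a $\partial$-parallel torus. This gives case (1).

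\textbf{Orientable with boundary.} If $F$ is $\partial$-compressible, a $\partial$-compression along an arc with both ends on the same side shows $F$ is a $\partial$-parallel annulus. Otherwise $F$ is essential in the Seifert fibered space $T^2\times I$, fibered by circles of the slope of $\partial F$ on one side. Essential surfaces there are vertical or horizontal, and horizontal ones are excluded by the existence of boundary. This yields the vertical annulus $c\times I$. These are cases (2) and (3).

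\textbf{Non-orientable case.} I would put $F$ in Morse position with respect to the projection $t\colon T^2\times I\to I$.
\begin{enumerate}
\item Using incompressibility and irreducibility, remove all inessential level curves: innermost ones bound disks in the level torus, hence disks in $F$, and an isotopy across the resulting balls removes them. Then index-0 and index-2 critical points cancel, and every regular level $F\cap (T^2\times\{t\})$ is a family of parallel essential curves.
\item Show that each regular level can be taken to be a single curve. Two-sided saddles either join parallel curves into a $\partial$-parallel annulus piece, which can be isotoped away, or are inessential. One-sided saddles take one curve $c$ to one curve $c'$ by a M\"obius band move, which forces $i(c,c')=2$.
\item Conclude that $\partial F$ is one curve $c_0$ on each side, and that the successive level slopes form an edge path in $\mathcal{C}^{i=2}(T^2)$ from $p_0/q_0$ to $p_1/q_1$. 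Each one-sided saddle raises the non-orientable genus by one, so the genus of $F$ equals the length of this path, which is at least $d(p_0/q_0,p_1/q_1)$. The slopes differ, since otherwise we are in the annulus cases.
\end{enumerate}

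\textbf{Main obstacle.} The key point is showing that incompressibility forces the path to be a geodesic, so that the genus equals $d(p_0/q_0,p_1/q_1)$. Because each component of $\mathcal{C}^{i=2}(T^2)$ is a tree, a non-geodesic path must backtrack, giving consecutive slopes $c\to c'\to c$. I would show that the two corresponding M\"obius band moves, performed between levels, cobound a region in which there is a compressing disk: the disk is the product of an arc of $c$ with the vertical interval. Its boundary is essential in $F$, since it crosses the two one-sided bands. Thus an incompressible $F$ yields a geodesic path.

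Finally, realize each geodesic by stacking band moves, and argue uniqueness up to isotopy using the transitivity of Lemma \ref{lem:transitivity} on edges. This is where I expect the most delicate bookkeeping, in particular handling saddles at different heights that could be reordered.
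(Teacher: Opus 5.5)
The paper gives no proof of this theorem: it imports it directly from \cite[Theorem 2.3]{Przytycki2019}, so your first sentence matches the paper's treatment.

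Your self-contained sketch, however, has a genuine gap at exactly the step you flag, namely that a backtracking slope sequence $s\to s'\to s$ forces a compression. The disk you propose, an arc of $c$ times the vertical interval, is not a compressing disk. Below the first saddle the level set of $F$ is $c$ itself, so much of that product lies inside $F$ rather than meeting $F$ only along its boundary. More fundamentally, knowing that the second band returns the level curve to slope $s$ is only a statement about isotopy classes of curves. It says nothing about where the second band $b_2$ sits relative to the first band $b_1$, and that is what a product disk needs.

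The missing ingredient is a uniqueness statement for one-sided band moves. Normalize $c'$ to be a $0/1$-curve. A band attached to $c'$ from opposite sides has as core a spanning arc of the annulus $T^2\setminus c'$, and the move produces slope $2/(2k+1)$, where $k$ is the twisting of that arc. So the band is determined, up to isotopy through such bands, by the slope it produces. Since the level set between the two saddle heights $t_1<t_2$ is the single curve $c'$, this isotopy is induced by an isotopy of $F$. Hence if $s_{i+1}=s_{i-1}$, you may assume $b_2$ is the dual band of $b_1$.

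Now let $\delta$ be the cocore arc of $b_1$. The disk $\delta\times[t_1,t_2]$ has its bottom in $b_1$, its sides in $c'\times[t_1,t_2]$, its top in $b_2$, and its interior disjoint from $F$. Its boundary meets exactly once the proper arc of $F$ obtained by extending the core of $b_1$ down to $\partial F$. A curve bounding a disk in $F$ meets every proper arc an even number of times, so the compression is nontrivial. Together with the tree structure of $\mathcal{C}^{i=2}(T^2)$, this gives genus $d(p_0/q_0,p_1/q_1)$.

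Smaller points:
\begin{itemize}
\item In the orientable case, horizontal surfaces are not excluded by the mere presence of boundary. They are excluded because you fibred $T^2\times I$ by the slope of $\partial F$ on one side, whereas the boundary of a horizontal surface is transverse to the fibres.
\item Your step (1) is more delicate than an isotopy across balls. When the disk in $F$ bounded by an inessential level curve contains a single extremum, that move removes nothing. You need minimum--saddle cancellations, including the case where a minimum and a saddle produce an annulus bounded by two parallel essential level curves.
\item Once every level is essential, step (2) is simpler than you suggest. Two-sided saddles cannot occur at all, since merging two adjacent parallel essential curves, or splitting one, always creates an inessential curve. A one-sided saddle requires the level to be a single curve, so connectivity gives one curve per level.
\item The realization and uniqueness discussion in your last paragraph is unnecessary, since the statement is only the classification direction.
\end{itemize}
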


Hereafter we use $\Pi_{g,b}$ to denote the non-orientable, connected, compact surface of genus $g$ and with $b$ boundary components. 
Moreover, $\Pi_{g,0}$ is denoted by $\Pi_g$ briefly. 

The non-orientable surfaces in Theorem \ref{thm:thickened-torus} (4) can be constructed as follows.
First for a $1/1$-curve on $T^2\times\{0\}$ and a $-1/1$-curve on $T^2\times\{1\}$, there is an embedded $\Pi_{1,2}$ in $T^2\times I$ taking them as boundary components.
See Figure \ref{fig:thickened-torus}, where $T^2\times I$ is obtained from the cube by gluing the front face and the back face, as well as the left face and the right face, in a trivial way.
By Lemma \ref{lem:transitivity}, for each edge $\{p/q,p'/q'\}$ in $\mathcal{C}^{i=2}(T^2)$, a $p/q$-curve on $T^2\times\{0\}$ and a $p'/q'$-curve on $T^2\times\{1\}$ cobound an embedded $\Pi_{1,2}$.
Thus given any path connecting $p_0/q_0,p_1/q_1$ in $\mathcal{C}^{i=2}(T^2)$ of length $l$, we can construct an embedded $\Pi_{1,2}$ in each $T^2\times [\frac{j-1}{l},\frac{j}{l}]$ for the $j$-th edge ($j=1,2,\cdots,l$).
Their union is an embedded $\Pi_{l,2}$ in $T^2\times I$, whose boundary consists of a $p_0/q_0$-curve on $T^2\times\{0\}$ and a $p_1/q_1$-curve on $T^2\times\{1\}$.

\begin{figure}[htbp]
	\centering
	\begin{picture}(110, 105)(0,0)
		\put(0,0){\includegraphics{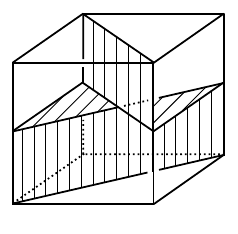}
		}
		\put(50,17){\footnotesize $1/1$}
		\put(56,94){\footnotesize $-1/1$}
	\end{picture}
	\caption{An embedded $\Pi_{1,2}$ in $T^2 \times I$.}
	\label{fig:thickened-torus}
\end{figure}

Suppose that $F$ is an incompressible connected surface in the solid torus $D^2 \times S^1$. 
If $F$ is orientable, then according to \cite[Lemmas 1.10 \& 1.11]{Hatcher2023}, it is a $\partial$-parallel annulus.
So by \cite[\S 3]{Rubinstein1978} (also see the explanation in \cite[p. 107]{Frohman1986}), \cite[Corollary 2.2.]{Frohman1986}, and \cite[Corollary 2.5]{Przytycki2019}, we have the following conclusion on the structure of incompressible surfaces in the solid torus.
Here we fix an orientation of $D^2\times S^1$, and a simple closed curve $c$ on its boundary is called a $p/q$-curve if with an orientation $[c]$ equals 
\begin{displaymath}
	p[*\times S^1]+q[\partial D^2 \times *] \in H_1(\partial D^2 \times S^1; \mathbb{Z}) \cong \mathbb{Z} \langle [*\times S^1] \rangle \oplus \mathbb{Z} \langle [\partial D^2 \times *] \rangle.
\end{displaymath}

\begin{prop}
	\label{prop:solid-torus}
	Suppose that $F$ is a connected surface and properly embedded in the solid torus $D^2 \times S^1$. 
	Then $F$ is incompressible in $D^2\times S^1$ if and only if either $F$ is a $\partial$-parallel annulus, or $\partial F$ is a $2k/q$-curve on $\partial D^2 \times S^1$ with $k,q \in \mathbb{Z}\backslash\{0\}$ and $F\cong \Pi_{N(2k,q),1}$.
	Moreover, $F$ is essential if and only if $F$ is isotopic to a meridian disk $D^2\times *$.
\end{prop}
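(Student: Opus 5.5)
The proof splits according to whether $F$ is orientable. If $F$ is orientable and incompressible, then by \cite[Lemmas 1.10 \& 1.11]{Hatcher2023} it is a $\partial$-parallel annulus, as noted just before the statement. Conversely, a $\partial$-parallel annulus has essential core curves, since a $p/q$-curve with $p\neq 0$ is nontrivial in $\pi_1(D^2\times S^1)$. It is therefore $\pi_1$-injective, hence incompressible, and it is neither a disk nor a sphere. So in the orientable case the "if and only if" reduces to this one family.

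The main case is $F$ non-orientable, where I would use the classical description of non-orientable incompressible surfaces in the solid torus. By \cite[\S 3]{Rubinstein1978} and \cite[Corollary 2.2]{Frohman1986}, such a surface has connected boundary, a $p/q$-curve with $p$ even and nonzero. The nonzero condition is needed because $\partial F$ is null-homologous mod 2 in $D^2\times S^1$, so $p$ is even, and the $0/1$-curve bounds a disk, which would make $F$ a compressible punctured surface.

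The genus is where \cite[Corollary 2.5]{Przytycki2019} enters. The strategy is to remove a small neighbourhood of a core circle (or of a meridian disk) to relate $F$ to a surface in $T^2\times I$ bounded by the meridian slope $0/1$ and the slope of $\partial F$. Theorem \ref{thm:thickened-torus}(4) then gives genus $d(0/1,2k/q)$, which equals $N(2k,q)$ by Lemma \ref{lem:distance}. Existence goes by the same picture: take a shortest path from $0/1$ to $2k/q$ in $T_{0/1}$, build the $\Pi_{l,2}$ of Section \ref{subsect:torus}, and cap the $0/1$ boundary component with a meridian disk. This yields $\Pi_{N(2k,q),1}$. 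Incompressibility of any $F\cong\Pi_{N(2k,q),1}$ with such boundary is exactly the content of \cite[Corollary 2.5]{Przytycki2019}: a compression would produce a surface of smaller genus with the same boundary (after discarding closed components), contradicting minimality of $N$. I expect this minimal-genus step to be the main obstacle. I would simply invoke the cited corollary rather than reprove it, checking only that the slope convention there matches the one fixed here (with $p$ the longitudinal coefficient, so the meridian is $0/1$).

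For essentiality, a meridian disk is essential because it is not $\partial$-parallel. Conversely, an essential $F$ that is not a disk is incompressible and $\partial$-incompressible. A $\partial$-parallel annulus is $\partial$-compressible. For $\Pi_{N,1}$, since $\partial F$ meets a meridian disk $D$ essentially, an outermost-arc argument on $F\cap D$ produces a nontrivial $\partial$-compressing disk, because arcs of $D\cap F$ cannot all be inessential in $F$ when $|p|\geq 2$. Alternatively one can cite \cite[Lemmas 1.10 \& 1.11]{Hatcher2023}: essential surfaces in a handlebody-like solid torus are disks, which gives the claim directly. Spheres bound balls since the solid torus is irreducible, so the only essential surfaces are meridian disks up to isotopy.
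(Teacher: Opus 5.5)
Your overall route matches the paper's. You take the orientable case from \cite[Lemmas 1.10 \& 1.11]{Hatcher2023} and the non-orientable classification from \cite{Rubinstein1978,Frohman1986,Przytycki2019}. For existence you cap off the $\Pi_{l,2}$ built from a shortest path $0/1\to 2k/q$ in $T_{0/1}$ with a meridian disk, which is exactly the construction the paper gives right after the statement. Your compression argument for the converse in the non-orientable case is a sound addition. Any closed piece produced by a compression separates, since $H_2(D^2\times S^1;\mathbb{Z}_2)=0$. It is therefore two-sided, hence orientable, and it has positive genus because the compressing curve was essential. The piece containing $\partial F$ stays non-orientable, since $[\partial F]=2k\neq 0$ in $H_1(D^2\times S^1;\mathbb{Z})$. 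So its genus drops to at most $N(2k,q)-2$. This is impossible: compress further to an incompressible surface with the same boundary and contradict the ``only if'' classification, or cap off in a lens space and apply Theorem~\ref{thm:BW1969}.

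\textbf{Gap: the converse for annuli.} You justify incompressibility of a $\partial$-parallel annulus by ``a $p/q$-curve with $p\neq 0$ is nontrivial in $\pi_1$'', but nothing forces $p\neq 0$, and for $p=0$ the claim is false. Let $F$ be obtained by pushing the interior of $\partial D^2\times[\theta_1,\theta_2]$ into $D^2\times S^1$. Then a small disk in $D^2\times\{(\theta_1+\theta_2)/2\}$ meets $F$ exactly in the core circle of $F$, so it is a nontrivial compressing disk. The ``if'' direction therefore holds only for $\partial$-parallel annuli of non-meridional slope. The statement as printed has the same imprecision; the cited lemmas of Hatcher only supply the ``only if'' direction. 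You should add the restriction $p\neq 0$, after which your $\pi_1$-injectivity argument (two-sided and $\pi_1$-injective implies incompressible) is fine.

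\textbf{Smaller slip.} Removing a neighbourhood of a meridian disk leaves a ball, not $T^2\times I$. The reduction to Theorem~\ref{thm:thickened-torus}(4) needs $F$ isotoped to meet a core circle in a single point. Then $F$ minus one meridian disk is an incompressible surface in $T^2\times I$ with boundary slopes $0/1$ and $2k/q$. That normal-form fact must be imported from \cite{Rubinstein1978,Przytycki2019}, just as the paper does by citation.
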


According to \cite[\S 3]{Rubinstein1978}, for a given $2k/q$-curve on $\partial D^2 \times S^1$, the incompressible surface $\Pi_{N(2k,q),1}$ bounded by it in $D^2\times S^1$ is unique up to isotopy.
It can be constructed as follows.
First take a smaller disk $D_0^2$ in $D^2$.
By Theorem \ref{thm:thickened-torus}, in $(D^2-{\rm Int}(D_0^2))\times S^1 \cong T^2\times I$ there is a non-orientable surface of genus $d(0/1,2k/q)=N(2k,q)$ bounded by $\partial D_0^2\times *$ and a $2k/q$-curve on $\partial D^2 \times S^1$.
So the union of this surface and $D_0^2\times *$ is a $\Pi_{N(2k,q),1}$.
Moreover, this implies that the non-orientable surfaces in Theorem \ref{thm:thickened-torus} (4) are all incompressible in the thickened torus.

\begin{lem}[{\cite[Lemma 2.8]{Du2022}}]
	\label{lem:connected}
	If $F_1$ and $F_2$ are two incompressible non-orientable surfaces in the solid torus, then $F_1\cap F_2\neq \emptyset$.
\end{lem}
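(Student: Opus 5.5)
Suppose, for contradiction, that $F_1\cap F_2=\emptyset$, and apply Proposition \ref{prop:solid-torus} to each. Since each $F_i$ is a connected, incompressible, non-orientable surface, its boundary $\partial F_i$ is a single $2k_i/q_i$-curve on $\partial D^2\times S^1$ with $k_i,q_i\neq 0$, and $F_i\cong\Pi_{N(2k_i,q_i),1}$. (If the $F_i$ are allowed to be disconnected, I would first pass to a non-orientable component of each.)

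The first step is to compare the boundary slopes. Since $F_1$ and $F_2$ are disjoint, $\partial F_1$ and $\partial F_2$ are disjoint essential simple closed curves on the torus $\partial D^2\times S^1$, so they have the same slope. Write it as $2k/q$ with $q$ odd.

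The main step is a mod 2 intersection argument, which is the one I expect to be the real obstacle. I would use the class $[F_i,\partial F_i]\in H_2(D^2\times S^1,\partial;\mathbb{Z}_2)\cong H^1(D^2\times S^1;\mathbb{Z}_2)\cong\mathbb{Z}_2$. This class is detected by the mod 2 intersection number with the core circle $*\times S^1$. It can be computed on the boundary: the class $[\partial F_i]\in H_1(\partial D^2\times S^1;\mathbb{Z}_2)$ is $q[\partial D^2\times *]$ mod 2, i.e. the meridian class, since $q$ is odd. So each $F_i$ is dual mod 2 to the core and meets a core curve an odd number of times. Concretely, $F_i$ behaves like a meridian disk mod 2.

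Next I push $\partial F_2$ off $\partial F_1$ on the torus to get a parallel copy and consider $F_1\cap F_2$. Choose a core circle $\gamma$ lying in the region between the parallel boundary annulus and one of the surfaces. The cleanest version is as follows. $F_1$ separates $D^2\times S^1$ mod 2 in the sense that $D^2\times S^1-F_1$ is connected, since $F_1$ is non-separating with nonzero class. Then cut along $F_1$ to get a manifold $W$. The double cover picture says $F_2$, disjoint from $F_1$, lies in $W$ and still represents the nonzero class. A loop meeting $F_1$ once (the core, after isotopy) can be pushed into the complement of $F_2$ near $F_1$, giving a contradiction with the intersection parity.

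If that parity argument gets tangled, my fallback is to use the uniqueness of the incompressible surface with given boundary curve (Rubinstein \cite[\S 3]{Rubinstein1978}), together with the explicit construction. With that, $F_2$ is isotopic to a surface built as a meridian disk $D_0^2\times *$ plus a surface in $(D^2-{\rm Int}D_0^2)\times S^1$. Disjointness of two such surfaces would then force, after cutting along an annulus between the boundaries, two disjoint non-orientable surfaces in $T^2\times I$ realizing a path from $0/1$ to $2k/q$. I would rule this out by comparing with the $\mathbb{Z}_2$ class: $F_1\cup F_2\cup A$, with $A$ the boundary annulus, would be a closed surface whose class is twice the meridian class, hence zero. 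Thus it bounds a region $R$, and then I would analyse $R$ to contradict non-orientability. Deciding which of these two routes closes cleanly is the hardest part.
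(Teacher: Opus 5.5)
Your mod~2 set-up is correct: with $V=D^2\times S^1$, each $F_i$ carries the nonzero class of $H_2(V,\partial V;\mathbb{Z}_2)\cong\mathbb{Z}_2$. But neither route is closed, and the idea that would close them is missing: a non-orientable surface in the orientable manifold $V$ is \emph{one-sided}. In the main route everything rests on the sentence that a loop meeting $F_1$ once ``(the core, after isotopy) can be pushed into the complement of $F_2$ near $F_1$'', and you give no valid reason for it. Nothing established before that sentence distinguishes $F_1$ from a meridian disk. Two disjoint meridian disks $D_1,D_2$ also carry the same nonzero class, and $V-D_1$ is connected. Yet every loop meeting $D_1$ once must meet $D_2$, and the core cannot be isotoped into the ball $N(D_1)$. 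So this is exactly the step where non-orientability has to be used, and you never use it there. The fallback stops at the same point (``analyse $R$ to contradict non-orientability''). Its detour through the standard model in $T^2\times I$ is also unnecessary, and not justified: isotoping each $F_i$ separately into standard form need not preserve their disjointness.

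Here is the missing step. Since $F_1$ is non-orientable and $V$ is orientable, the normal bundle of $F_1$ is nontrivial. So a thin regular neighbourhood $N(F_1)$, chosen disjoint from $F_2$, is a twisted $I$-bundle. Pushing an orientation-reversing loop of $F_1$ slightly off itself inside $N(F_1)$ gives a closed loop $\gamma$ that meets $F_1$ transversely in exactly one point and misses $F_2$. The same construction applied to $F_2$ shows $[F_2]\neq 0$, so $[F_1]=[F_2]$ in $\mathbb{Z}_2$. Then $1=\gamma\cdot[F_1]=\gamma\cdot[F_2]=0$, a contradiction. This argument needs neither the slope comparison nor incompressibility. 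Alternatively, you can finish your fallback directly. Let $A\subset\partial V$ be an annulus between the parallel curves $\partial F_1,\partial F_2$. Then $S=F_1\cup A\cup F_2$, pushed slightly into the interior, is a closed connected surface containing $F_1$, hence non-orientable. But $H_2(V;\mathbb{Z}_2)=0$, not ``twice the meridian class'', forces $S$ to separate $V$. So $S$ is two-sided and therefore orientable, since $V$ is; equivalently, $S=\partial R$ for a compact region $R\subset V$. This is a contradiction.
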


\begin{cor}
	\label{cor:thickened-torus}
	If $F$ is an incompressible surface in $T^2\times I$, then up to isotopy it is one of the following surfaces:
	\begin{enumerate}
		\item a union of several $\partial$-parallel annuli and several $\partial$-parallel tori;
		\item a union of several $\partial$-parallel annuli and several parallel copies of a vertical annulus $c\times I$ with $c$ an essential simple closed curve on $T^2$;
		\item a union of several $\partial$-parallel annuli and a non-orientable surface bounded by two essential simple closed curves $c_0\subset T^2\times\{0\}$, $c_1\subset T^2\times\{1\}$ of different slopes $p_0/q_0, p_1/q_1$, whose genus equals $d(p_0/q_0,p_1/q_1)$.
	\end{enumerate}
\end{cor}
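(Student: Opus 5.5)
By Lemma \ref{lem:component}, every component of $F$ is incompressible in $T^2\times I$. Theorem \ref{thm:thickened-torus} therefore says each component is, up to isotopy, of type (1)--(4). The work is to show that only the listed combinations can occur simultaneously as \emph{disjoint} surfaces, and that the components can then be isotoped into the standard positions at once.

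First I handle the case where some component $F_0$ is non-orientable, of type (4). Let $c_0\subset T^2\times\{0\}$ and $c_1\subset T^2\times\{1\}$ be its boundary curves, with distinct slopes. I claim no other component is non-orientable or a vertical annulus.

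For a second non-orientable component $F_0'$, I would glue a solid torus to $T^2\times\{0\}$ with meridian of slope $p_0/q_0$. Then $F_0$ caps off by a meridian disk to a surface in the solid torus. This surface is incompressible non-orientable by the construction after Proposition \ref{prop:solid-torus}, since its genus is the tree distance. If $F_0'$ has the same bottom slope, it also caps off; a parallel copy of the meridian disk works since the boundaries are disjoint. Lemma \ref{lem:connected} then gives a contradiction.

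Disjointness of the boundary curves does force the slopes on each boundary torus to agree. Disjoint essential curves on $T^2$ are parallel, so $F_0'$ has slopes $p_0/q_0$ and $p_1/q_1$ as well, and the same-bottom-slope case always applies.

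A vertical annulus $c\times I$ disjoint from $F_0$ would have slope equal to both $p_0/q_0$ and $p_1/q_1$, which is impossible since these differ.

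$\partial$-parallel tori also cannot coexist with $F_0$. Such a torus $T$ separates $T^2\times I$, with one side a product region adjacent to a boundary torus. The connected $F_0$ meets both boundary tori, so it must cross $T$, contradicting disjointness. The same argument excludes $\partial$-parallel tori when a vertical annulus is present, and shows that all vertical annuli are parallel copies, since their slopes must agree. This yields exactly the three cases.

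The main obstacle is simultaneous isotopy: Theorem \ref{thm:thickened-torus} isotopes each component individually, while the corollary is stated for the union. I would proceed by an innermost/outermost argument. Each $\partial$-parallel annulus $A$ cobounds with an annulus in $\partial(T^2\times I)$ a solid torus region $A\times I$. Choose an outermost such annulus, meaning its parallelism region is not contained in another's. Other components inside that region are incompressible, so they are themselves $\partial$-parallel annuli or tori nested inside it. The parallelism regions of outermost annuli are then disjoint, and their complement is again homeomorphic to $T^2\times I$, containing the remaining components.

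Similarly, $\partial$-parallel tori are nested parallel copies, cutting $T^2\times I$ into product layers. In case (3) I would apply the uniqueness of the model surface, via the capped solid-torus picture and \cite[\S 3]{Rubinstein1978}, in the complement. This puts the non-orientable component in standard position without disturbing the others.
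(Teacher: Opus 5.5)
Your proof is correct and follows the paper's route: apply Lemma~\ref{lem:component} and Theorem~\ref{thm:thickened-torus} componentwise, then exclude a second non-orientable component by gluing a solid torus along $T^2\times\{0\}$ so that the (necessarily parallel) bottom boundary curves bound meridian disks, and invoke Lemma~\ref{lem:connected}. Your slope and separation arguments ruling out vertical annuli and $\partial$-parallel tori alongside the non-orientable piece, and your simultaneous-isotopy discussion, just make explicit what the paper compresses into ``the conclusion follows.'' In fact only case (2) needs any simultaneous isotopy, since the descriptions in (1) and (3) are isotopy-invariant properties of the individual components.
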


\begin{proof}
	By Lemma \ref{lem:component}, each component of $F$ is isotopic to one of the surfaces in Theorem \ref{thm:thickened-torus}.
	If there are two non-orientable components $F_1$, $F_2$ in $T^2\times I$, then we can glue a solid torus $D^2\times S^1$ to $T^2\times I$ via a homeomorphism $\partial D^2\times S^1\xrightarrow{\cong}T^2\times\{0\}$, such that $(\partial F_1\cup \partial F_2) \cap (T^2\times\{0\})$ bounds a union of meridian disks in $D^2\times S^1$.
	Then we get a solid torus, and the union of $F_1\cup F_2$ and these meridian disks gives two incompressible non-orientable surfaces, a contradiction to Lemma \ref{lem:connected}.
	Therefore, there is at most one non-orientable component in $F$ and the conclusion follows.
\end{proof}

\subsection{Incompressible surfaces in Seifert fibered manifolds}

A 3-manifold $M$ is \textbf{irreducible}, if each embedded 2-sphere bounds an embedded 3-ball.
The structure of incompressible surfaces and essential surfaces in irreducible Seifert fibered 3-manifolds was investigated in \cite{Waldhausen1967,Hatcher2023,Frohman1986,Rannard1996,KN2023}.
For convenience here we only give an introduction to the result for compact orientable Seifert fibered 3-manifolds with orientable base surfaces.
Such a 3-manifold can be constructed as follows.
Denote by $\Sigma_{g,b}$ a compact orientable surface of genus $g$ and with $b$ boundary components.
Let $B=\Sigma_{g,b}$ with $b\geq 0$ (the base surface).
Take $n$ disjoint closed disks $B_1,B_2,\cdots,B_n$ in its interior ${\rm Int}(B)$ (we may always assume that $n\geq 1$). 
Let $B_0$ be $B-\cup_{i=1}^n{\rm Int}(B_i)\cong\Sigma_{g,b+n}$ and let $M_0$ be $B_0\times S^1$.
Fix orientations of $B_0$ and $S^1$, so $M_0$ and $\partial B_1,\partial B_2\cdots,\partial B_n$ are all oriented.
For each $i=1,2,\cdots,n$, take a solid torus $V_i=D_i^2\times S_i^1$ (with a suitable orientation) and attach it to $M_0$ 
via $f_i:\partial D_i^2\times S_i^1\xrightarrow{\cong}\partial B_i\times S^1$, which induces
\begin{displaymath}
	\begin{matrix}
		(f_i)_*: & H_1(\partial D_i^2\times S_i^1;\mathbb{Z}) & \xrightarrow{\cong} & H_1(\partial B_i\times S^1;\mathbb{Z}) \\
		&  \left(\begin{matrix}
			[\partial D_i^2\times *] & [*\times S_i^1]
		\end{matrix}\right)
		&\mapsto &
		\left(\begin{matrix}
			[\partial B_i\times *] & [*\times S^1]
		\end{matrix}\right)
		\left(\begin{matrix}
			\alpha_i & \gamma_i \\
			\beta_i & \delta_i
		\end{matrix}\right)
	\end{matrix}
\end{displaymath}
with $\alpha_i,\beta_i,\delta_i,\gamma_i\in\mathbb{Z}$, $\alpha_i\delta_i-\beta_i\gamma_i=1$, and $\alpha_i \neq 0$.
Then we get a compact orientable 3-manifold $M_0\cup V_1\cup\cdots\cup V_n$. 
Using the notation in \cite{Du2022}, we denote it by
\begin{displaymath}
	M=(\Sigma_{g,b},(\alpha_1,\beta_1),\cdots,(\alpha_n,\beta_n)).
\end{displaymath}
Note that $M_0=B_0\times S^1$ is fibered by $S^1$ naturally and this fibering extends to each $V_i$ as $\alpha_i\neq 0$.
So $M$ is Seifert fibered. 
For each $i=1,2,\cdots,n$, the fiber $\{0\}\times S_i^1\subset D_i^2\times S_i^1$ is singular if and only if $\alpha_i\neq \pm 1$. 
Other fibers are all regular.

For a properly embedded surface $F$ in $M$, it is called 
\begin{itemize}
	\item \textbf{vertical}, if $F$ is a union of regular fibers of the Seifert fibered manifold;
	\item \textbf{pseudo-vertical}, if $F\cap M_0$ is vertical in $M_0$ and for each $i=1,2,\cdots,n$, $F\cap V_i$ is either empty or an incompressible non-orientable surface;
	\item \textbf{horizontal}, if $F$ is transverse to all fibers;
	\item \textbf{pseudo-horizontal}, if $F\cap M_0$ is horizontal in $M_0$ and for each $i=1,2,\cdots,n$, $F\cap V_i$ is either an incompressible non-orientable surface or a collection of meridian disks of $V_i$.
\end{itemize}

\begin{rem}
	Using the notation in \cite[\S 2.1]{Hatcher2023}, such a Seifert fibered 3-manifold is denoted by
	\begin{displaymath}
	    M(g,b;\beta_1/\alpha_1,\beta_2/\alpha_2,\cdots,\beta_n/\alpha_n).
	\end{displaymath}
	If $M$ is closed, i.e., $b=0$, the sum $\sum\limits_{i=1}^n\beta_i/\alpha_i$ is an invariant, called the \textbf{Euler number} of the Seifert fibering, and horizontal surfaces exist if and only if the Euler number is zero \cite[Proposition 2.2]{Hatcher2023}.
\end{rem}

\begin{thm}[{\cite[Theorem 1.4]{KN2023}}]
	\label{thm:essential}
	Suppose that $M=(\Sigma_{g,b},(\alpha_1,\beta_1),\cdots,(\alpha_n,\beta_n))$ is irreducible and has at least one singular fiber.
	Then each essential connected surface in $M$ is isotopic to a pseudo-vertical surface or a pseudo-horizontal surface. 
\end{thm}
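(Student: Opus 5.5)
The plan is to cut $F$ along the tori $T_i=\partial V_i$, classify the pieces in $M_0$ and in each $V_i$ separately, and then reassemble.

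\emph{Step 1: put $F$ in good position with respect to the $T_i$.} Isotope $F$ so that it is transverse to $T_1,\dots,T_n$ and the number of components of $F\cap\bigcup_i T_i$ is minimal. A curve of $F\cap T_i$ that is inessential in $T_i$ has an innermost representative bounding a disk $E\subset T_i$ with interior disjoint from $F$. Since $F$ is incompressible, $\partial E$ bounds a disk $E'\subset F$. Since $M$ is irreducible, the sphere $E\cup E'$ bounds a ball, and an isotopy across that ball reduces the intersection, contradicting minimality. So every curve of $F\cap T_i$ is essential in $T_i$; for each $i$ these curves are parallel and have a common slope $s_i$.

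\emph{Step 2: the pieces are incompressible.} Show that $F\cap V_i$ is incompressible in $V_i$ and $F\cap M_0$ is incompressible in $M_0$. Take a compressing disk $D$ for a piece. Its boundary bounds a disk $D'\subset F$, and $D'$ meets the $T_i$ only in curves essential in $T_i$. Such a curve bounds a disk in $F$, so its slope must be the meridian slope of some $V_i$, and then $D'\cap V_i$ consists of meridian disks. An innermost-curve argument inside $D'$, together with irreducibility, lets us replace $D'$ by a disk missing the $T_i$. This shows $\partial D$ is trivial in the piece.

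\emph{Step 3: classify the pieces.} By Proposition \ref{prop:solid-torus}, each component of $F\cap V_i$ is one of three things: a $\partial$-parallel annulus, a meridian disk, or a non-orientable $\Pi_{N(2k,q),1}$. Any $\partial$-parallel annulus can be pushed across $T_i$ into $M_0$, lowering the intersection number, so by minimality none occurs. Now $M_0=B_0\times S^1$ with $B_0$ having nonempty boundary; this uses $n\ge1$, which the singular-fiber hypothesis allows us to assume. We then want to invoke the classical fact (Waldhausen; \cite[Prop.~1.11]{Hatcher2023}) that essential surfaces in $B_0\times S^1$ are isotopic to vertical or horizontal ones. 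For that we must also show $F\cap M_0$ is $\partial$-incompressible.

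\emph{Step 4 (main obstacle): $\partial$-incompressibility of $F\cap M_0$.} Let $\Delta\subset M_0$ be a $\partial$-compressing disk with its arc $\beta=\partial\Delta\cap T_i$. I would consider the band $\beta\times I$ in a collar of $T_i$ together with the adjacent component of $F\cap V_i$.
If that component is a meridian disk or a non-orientable surface, $\beta$ joins two boundary curves of $F\cap V_i$ or returns to the same one. In the first case, $\Delta$ combined with a strip in $V_i$ produces either a compressing disk for $F$ or an isotopy of $F$ reducing $|F\cap T_i|$. In the second case, one can show $\partial$-compressing yields a $\partial$-parallel annulus in $M_0$, which again contradicts minimality.
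The delicate subcase is when the component is one of the Rubinstein surfaces $\Pi_{N,1}$ in $V_i$. For it I would use the uniqueness of these surfaces and the fact (Lemma \ref{lem:connected}) that $V_i$ contains at most one of them up to parallelism. This gives enough control on the arcs $\beta$ to rule $\Delta$ out.
Also, $F$ is essential in $M$, so no $\Delta$ can have $\beta$ on $\partial M$.

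\emph{Step 5: reassemble.} By connectedness of $F$, and since vertical and horizontal surfaces in $M_0$ cannot be disjoint unless they are parallel tori, $F\cap M_0$ is isotoped to be either entirely vertical or entirely horizontal.
In the vertical case each $s_i$ is the fiber slope. Since the fiber slope is never the meridian slope of $V_i$, $F\cap V_i$ is empty or non-orientable, so $F$ is pseudo-vertical.
In the horizontal case each $F\cap V_i$ is a collection of meridian disks or a non-orientable incompressible surface, so $F$ is pseudo-horizontal.
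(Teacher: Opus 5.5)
The paper gives no proof of this statement; it is quoted from \cite[Theorem 1.4]{KN2023}, so your outline has to stand on its own. Steps 1--3 and 5 are the standard cut-and-classify scheme and are essentially fine. Step 4, however, is a genuine gap. The decisive subcase is only asserted (``this gives enough control on the arcs $\beta$ to rule $\Delta$ out''). Worse, the claim is false under the normalization of Step 1, where you minimize only the number of curves in $F\cap\bigcup_i T_i$.

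Here is a counterexample. Take an essential pseudo-vertical surface $F$ as in Lemma \ref{lem:pseudo-vertical}(2) or (3), e.g.\ the M\"obius band of Theorem \ref{thm:twisted-bundle}(4). Let $P=F\cap V_i\cong\Pi_{N,1}$ and $c=\partial P$.

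By Proposition \ref{prop:solid-torus}, $P$ is incompressible but not essential in $V_i$. So it has a nontrivial $\partial$-compressing disk $\Delta'\subset V_i$. The arc $\beta'=\partial\Delta'\cap T_i$ must run from one side of $c$ to the other: an arc returning to the same side cuts off a disk of $T_i-c$, and incompressibility of $P$ would then make $\Delta'$ trivial.

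Now push $F$ across $\Delta'$, through $T_i$, into $M_0$, and call the result $F''$.
\begin{itemize}
\item $F''$ meets $T_i$ in the single curve $c''$ obtained by band surgery of $c$ along $\beta'$, so $i(c,c'')=2$.
\item The number of intersection curves is unchanged. It is still minimal, because $F$ meets the core of each $V_k$ it enters an odd number of times.
\item $F''$ satisfies everything you arranged in Steps 1--3: $F''\cap V_i$ is $\Pi_{N-1,1}$ (a meridian disk if $N=1$), and there are no $\partial$-parallel annuli.
\end{itemize}
Yet $F''\cap M_0$ is $F\cap M_0$ with a band attached to $c$ from its two sides. The cocore of this band, swept down to $T_i$, is a $\partial$-compressing disk whose arc does not separate $F''\cap M_0$ (cutting along it gives back $F\cap M_0$), so it is nontrivial.

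Thus neither your minimality, nor uniqueness of Rubinstein surfaces, nor Lemma \ref{lem:connected} can exclude $\Delta$. The same example shows your ``second case'' is wrong. When $\beta$ returns to the same curve from the other side, nothing becomes $\partial$-parallel; instead, $\partial$-compressing moves the slope of $F\cap T_i$ to an adjacent vertex of $\mathcal{C}^{i=2}(T^2)$.

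The missing idea is to \emph{use} such a $\Delta$ rather than rule it out, with a secondary complexity. Minimize lexicographically the pair (number of curves of $F\cap\bigcup_i T_i$, $-\chi(F\cap M_0)$). Let $\Delta\subset M_0$ be a nontrivial $\partial$-compressing disk with arc $\beta\subset T_i$.
\begin{itemize}
\item If $\beta$ joins two distinct (hence adjacent) curves, pushing $F$ across $\Delta$ into $V_i$ creates an inessential curve and lowers the first entry.
\item If $\beta$ returns to the same side of a curve, then $\beta$ and an arc of that curve bound a disk in $T_i$. Incompressibility of $F\cap M_0$ then makes $\Delta$ trivial.
\item If $\beta$ joins the two sides of $c$ (which forces $F\cap T_i=c$), pushing into $V_i$ keeps one curve and raises $\chi(F\cap M_0)$ by one. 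It turns $F\cap V_i$ into a one-sided surface of one higher genus (a M\"obius band if it was a meridian disk), still incompressible by Step 2.
\end{itemize}
Since $\chi(F\cap M_0)\le 0$ once $\partial M_0$ is incompressible in $M_0$, the process terminates, and only then is $F\cap M_0$ essential so that Step 3 applies.

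This bound needs $B_0$ not to be a disk, which $n\ge 1$ alone does not guarantee. The case $g=b=0$, $n=1$ (a lens space, where $M_0$ is a solid torus) has to be treated separately.
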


\begin{thm}[{\cite[Lemma 2.16]{Jackson2025}}]
	\label{thm:pseudo-horizontal}
	Each essential surface in $(\Sigma_{g,b},(\alpha_1,\beta_1),\cdots,(\alpha_n,\beta_n))$ with $b\geq 1$ is isotopic to a pseudo-vertical surface or a horizontal surface.
\end{thm}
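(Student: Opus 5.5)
The plan is to reduce to the two classification theorems already available: Theorem \ref{thm:essential} of Kaminski--Nakamura, and the classical vertical/horizontal dichotomy for product Seifert fibrations. The reduction is a case distinction on whether $M$ has singular fibers. Afterwards, the hypothesis $b\geq 1$ is used to exclude the genuinely \emph{pseudo}-horizontal pieces.

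\textbf{Case with no singular fiber.} Suppose every $\alpha_i=\pm 1$. Then each $V_i$ can be absorbed into $M_0$, and $M\cong \Sigma_{g,b}\times S^1$ with $b\geq 1$. If $\Sigma_{g,b}$ is a disk or an annulus, $M$ is a solid torus or $T^2\times I$.
\begin{itemize}
\item For the solid torus, Proposition \ref{prop:solid-torus} says the essential surfaces are meridian disks $D^2\times *$, which are horizontal.
\item For $T^2\times I$, Corollary \ref{cor:thickened-torus} applies. The non-orientable surfaces in its case (3) have a boundary component that is $\partial$-compressible across a vertical rectangle, so the essential ones are vertical annuli $c\times I$. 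Each such annulus is vertical or horizontal for a suitable product structure.
\end{itemize}
Otherwise $M$ is a product with incompressible boundary. The standard argument applies (Waldhausen; \cite[Prop.~1.11]{Hatcher2023}). Cut along a system of vertical annuli $\gamma\times S^1$, with $\gamma$ arcs cutting $\Sigma_{g,b}$ into a disk. Put $F$ in normal position with respect to these annuli, removing inessential curves of intersection by incompressibility and inessential arcs by $\partial$-incompressibility. Then $F$ restricted to the resulting solid torus is a union of meridian disks or $\partial$-parallel annuli. This yields horizontal or vertical pieces, which glue consistently.

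\textbf{Case with a singular fiber.} Since $b\geq 1$, $M$ is irreducible: a compact Seifert fibered manifold with nonempty boundary is irreducible. Hence Theorem \ref{thm:essential} applies, and $F$ is isotopic to a pseudo-vertical or a pseudo-horizontal surface. Only the pseudo-horizontal case needs work. There I must show that no $F\cap V_i$ can be an incompressible non-orientable surface, so that every $F\cap V_i$ is a union of meridian disks and $F$ is horizontal.

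\textbf{Main obstacle.} Suppose $F\cap V_i\cong \Pi_{N,1}$ for some $i$. By the construction after Proposition \ref{prop:solid-torus}, this piece is a collar surface in $T^2\times I$ capped by one meridian disk of a smaller solid torus. The curve $\partial(F\cap V_i)$ has slope $2k/q$ on $\partial V_i$, and horizontality in $M_0$ forces it to meet each regular fiber with the same sign.

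My first attempt is the following. Take a properly embedded vertical annulus $A\subset M_0$ running from $\partial V_i$ to a component of $\partial M$; this exists because $b\geq 1$. Examine the arcs of $F\cap A$, which are transverse to the fibers. Use the one-sidedness of the $\Pi_{N,1}$ piece, i.e.\ the parity $2k$ of its boundary, to find an outermost arc of $F\cap A$ cobounding with an arc of $\partial M$ a disk. That disk would be a nontrivial $\partial$-compressing disk for $F$, contradicting essentiality.

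If this fails, a fallback is to count sheets instead. The slope condition gives that $F\cap\partial V_i$ consists of a single curve meeting the fiber $2|k|$ times. Comparing this with the covering degree of the horizontal surface $F\cap M_0\to B_0$ near $\partial M$ should produce a parity contradiction. The horizontal covering has a well-defined degree $d$ on every boundary torus, so the single curve on $\partial V_i$ meeting the fiber $2|k|$ times should force $d=2|k|$ and $\alpha_i\mid d$ compatibly with the twisted piece, which should be impossible. Making this parity argument rigorous is the step I expect to be hardest.
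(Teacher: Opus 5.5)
This statement is only quoted in the paper (from Jackson), so there is no proof to compare with. Judged on its own, your reduction is sound: $b\ge 1$ gives irreducibility, and Theorem~\ref{thm:essential} then leaves only pseudo-horizontal $F$ with some $F\cap V_i\cong\Pi_{N,1}$. But that remaining case is the entire content of the statement, and neither of your routes settles it.

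\textbf{The first route fails as written.} Since $F\cap M_0$ is horizontal and $A=\gamma\times S^1$ is vertical, every component of $F\cap A$ is transverse to the circles of $A$. So each one runs from $A\cap\partial V_i$ to $A\cap\partial M$, and no arc of $F\cap A$ cuts off a disk against $\partial M$. The complementary pieces are rectangles with one side on $\partial V_i$, and the parity of $2k$ plays no role.

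\textbf{The fallback cannot work in principle.} First, your count is off: a $2k/q$-curve meets the meridian $2|k|$ times, but it meets the regular fiber (an $\alpha_i/(-\gamma_i)$-curve) $|2k\gamma_i+q\alpha_i|$ times. More to the point, such pseudo-horizontal surfaces exist. In $(\Sigma_{0,2},(2,1))$ with $\gamma_1=\delta_1=1$, a M\"obius band in $V_1$ bounded by a $2/1$-curve has boundary class $4[\partial B_1\times *]+3[*\times S^1]$. Take a fibration $M_0=\Sigma_{0,3}\times S^1\to S^1$ representing the class in $H^1(M_0;\mathbb{Z})$ with values $-3$, $0$, $4$ on $[\partial B_1\times *]$, on a second boundary circle, and on $[*\times S^1]$. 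Its fiber is a connected horizontal surface with exactly one boundary curve on $\partial B_1\times S^1$, of exactly that class. So the obstruction is $\partial$-incompressibility, not existence, and no count of sheets can detect it.

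\textbf{The missing idea} is to close off one of those rectangles inside $V_i$.
\begin{enumerate}
\item Since $P=F\cap V_i\cong\Pi_{N,1}$ is incompressible but not a meridian disk, Proposition~\ref{prop:solid-torus} gives a nontrivial $\partial$-compressing disk $E\subset V_i$ with $E\cap P=b$ essential in $P$.
\item Then $a=E\cap\partial V_i$ must be a spanning arc of the annulus $\partial V_i\setminus\partial P$; otherwise incompressibility of $P$ makes $b$ inessential.
\item Your observation that the crossings of $\partial P$ with the fiber $A\cap\partial V_i$ all have the same sign says exactly that the $\partial V_i$-side of each rectangle $R$ is also such a spanning arc. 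After an isotopy of $V_i$ preserving $P$ (sliding endpoints along $\partial P$), you may take $a$ to be that side.
\item Then $R\cup E$ meets $F$ in a single arc and $\partial M$ in a single arc. It is a nontrivial $\partial$-compressing disk: a disk in $F$ cut off by that arc would contain a disk of $P$ bounded by $b$ and an arc of $\partial P$.
\end{enumerate}
This is where one-sidedness really enters: $\partial(F\cap V_i)$ is connected, whereas for meridian disks consecutive crossing points lie on different disks.

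\textbf{Two smaller points in the case without singular fibers.}
\begin{enumerate}
\item The Waldhausen--Hatcher cutting argument concerns two-sided surfaces. For non-orientable $F$, the pieces in the cut-open solid torus can also be $\Pi_{N,1}$ (Proposition~\ref{prop:solid-torus}), which your list of meridian disks and $\partial$-parallel annuli omits. That case needs its own argument.
\item In $T^2\times I$ you left out the $\partial$-parallel tori, which are essential in the paper's sense. This is harmless, as they are vertical.
\end{enumerate}
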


For a pseudo-vertical surface $F$, if $F\cap V_i\neq\emptyset$, then by Proposition \ref{prop:solid-torus} and Lemma \ref{lem:connected}, it is an incompressible, connected, non-orientable surface in $V_i$ whose boundary is a $2k/q$-curve with $k,q\in\mathbb{Z}\backslash\{0\}$ and $2k,q$ coprime to each other.
Since $F\cap\partial V_i$ is identified with a vertical circle in $M_0=B_0\times S^1$, we see that $2k/q=-\alpha_i/\gamma_i$.
So $\alpha_i$ must be even.
Since $-\beta_i\gamma_i\equiv 1\bmod\alpha_i$, we have $L(\alpha_i,-\gamma_i)\cong L(\alpha_i,\beta_i)$, so the genus of $F\cap V_i$ equals $N(2k,q)=N(\alpha_i,-\gamma_i)=N(\alpha_i,\beta_i)$.
Generally we have the following classification.

\begin{lem}
    \label{lem:pseudo-vertical}
	Suppose that $M=(\Sigma_{g,b},(\alpha_1,\beta_1),\cdots,(\alpha_n,\beta_n))$ is constructed as above and $F$ is a pseudo-vertical connected surface in it.
	Then $F$ is one of the following surfaces:
	\begin{enumerate}
		\item a vertical torus or a vertical annuls in $M_0=B_0\times S^1$;
		\item a pseudo vertical closed non-orientable surface of genus $N(\alpha_i,\beta_i)+N(\alpha_j,\beta_j)$ ($i,j\in\{1,2,\cdots,n\}$, $\alpha_i,\alpha_j\in 2\mathbb{Z}\backslash\{0\}$), with $F\cap M_0\cong S^1\times I$, $F\cap V_k \cong \Pi_{N(\alpha_k,\beta_k),1}$ for $k=i,j$ and $F\cap V_l=\emptyset$ for $l\neq i,j$;
		\item a pseudo vertical non-orientable surface with connected boundary, of genus $N(\alpha_i,\beta_i)$ ($i\in\{1,2,\cdots,n\}$, $\alpha_i\in 2\mathbb{Z}\backslash\{0\}$), with $F\cap M_0\cong S^1\times I$, $F\cap V_i \cong \Pi_{N(\alpha_i,\beta_i),1}$ and $F\cap V_j=\emptyset$ for $j\neq i$.
	\end{enumerate}
\end{lem}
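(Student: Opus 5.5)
The proof splits according to whether $F$ meets the solid tori $V_1,\dots,V_n$.

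\textbf{Case $F\cap V_i=\emptyset$ for all $i$.} Then $F=F\cap M_0$ is a connected vertical surface in $M_0=B_0\times S^1$. A connected vertical surface is a union of regular fibers, so it has the form $\gamma\times S^1$ for a connected properly embedded $1$-manifold $\gamma\subset B_0$. If $\gamma$ is a circle, $F$ is a vertical torus; if $\gamma$ is an arc, $F$ is a vertical annulus. This gives case (1).

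\textbf{Case $F$ meets some $V_i$.} Let $F_0=F\cap M_0$. Each component of $F_0$ is vertical, so it is $\gamma\times S^1$ with $\gamma$ a circle or a properly embedded arc in $B_0$. Consider a component whose boundary meets some $\partial V_i$. It cannot be a torus, so it is an annulus $\gamma\times S^1$. The two endpoints of $\gamma$ lie on $\partial B_0$, and each endpoint lies either on some $\partial B_i$ or on $\partial B$.

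Next I count how many circles of $F\cap\partial V_i$ there can be. By the discussion just before the lemma, $F\cap V_i$ is a single incompressible non-orientable surface. By Proposition~\ref{prop:solid-torus} it has connected boundary, and by Lemma~\ref{lem:connected} there is at most one such component. Hence $F\cap\partial V_i$ is exactly one curve. Consequently each $\partial B_i$ contains at most one endpoint of arcs of $\gamma$, and the whole of $F$ is $F_0$ together with the pieces $F\cap V_i$.

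Now I use connectedness. A component $\gamma\times S^1$ of $F_0$ together with the caps $F\cap V_i$ attached at its ends is already a closed-up piece of $F$: each cap has only one boundary circle, so nothing else can be glued to it. So $F$ consists of a single annulus $\gamma\times S^1$ with $\gamma$ an arc, plus the caps at its ends, and at least one end lies on some $\partial B_i$ with $F\cap V_i\ne\emptyset$.

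\textbf{Subcases.} If both endpoints of $\gamma$ lie on $\partial B_i$ and $\partial B_j$ with $F\cap V_i$ and $F\cap V_j$ nonempty, then $i\neq j$, since each $\partial B_i$ carries at most one endpoint. If exactly one endpoint does so, the other endpoint lies on $\partial B$. It cannot lie on some $\partial B_l$ with $F\cap V_l=\emptyset$, because then $F$ would not be properly embedded in $M$.

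\textbf{Genus.} By the discussion preceding the lemma, $\alpha_i$ is even and $F\cap V_i\cong\Pi_{N(\alpha_i,\beta_i),1}$. Gluing an annulus to two such caps along its boundary circles gives a closed non-orientable surface. Its Euler characteristic is
\[
(1-N_i)+(1-N_j)=2-(N_i+N_j),
\]
so its genus is $N_i+N_j$, where $N_i=N(\alpha_i,\beta_i)$. This is case (2). Gluing the annulus to a single cap gives $\Pi_{N_i,1}$, which is case (3).

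The main subtlety is showing that $F\cap\partial V_i$ is a single curve. This follows from Lemma~\ref{lem:connected} together with the fact that each incompressible non-orientable surface in the solid torus has connected boundary.
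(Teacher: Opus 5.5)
Your proof is correct and takes essentially the same approach as the paper. The paper gives no separate proof: its preceding paragraph shows, via Proposition~\ref{prop:solid-torus} and Lemma~\ref{lem:connected}, that each nonempty $F\cap V_i$ is a single $\Pi_{N(\alpha_i,\beta_i),1}$ with one boundary fiber and with $\alpha_i$ even. The rest is left implicit, and your connectedness argument supplies it: a single vertical annulus capped in one or two distinct solid tori, or an ordinary vertical torus or annulus when $F$ misses every $V_i$, with the Euler characteristic count giving the stated genera.
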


\subsection{Essential surfaces in the twisted \texorpdfstring{$I$}{I}-bundle over the Klein bottle}
\label{subsect:essential}

Take a thickened torus $S^1\times S^1\times [-1,1]$ with coordinate $(z,w,t)$, $z,w\in S^1\subset\mathbb{C}$, $t\in [-1,1]$.
Let $\sigma$ be the involution of $S^1\times S^1\times [-1,1]$ defined by $\sigma(z,w,t)=(-z,\overline{w},-t)$.
Denote the quotient space $S^1\times S^1\times [-1,1]/\sigma$ by $N$, and the projection by $\rho: S^1\times S^1\times I\to N$. 
Figure \ref{fig:semi-bundle} shows a fundamental domain 
$$\{(z,w,t)\in S^1\times S^1\times I: {\rm Im}\,z\geq 0\}$$
for the $\sigma$-action, and $N$ can be obtained by gluing its left side with the right side according to the arrows.
We see that $N$ is an orientable 3-manifold and can be viewed as a twisted $I$-bundle over the Klein bottle $K=\rho(S^1\times S^1\times \{0\})$.
The boundary of $N$ is a torus $\rho(S^1\times S^1\times \{\pm 1\})$. 
We can take two embedded circles on it: $\lambda=\rho(S^1\times \{1\}\times \{\pm 1\})$, $\mu=\rho(\{1\}\times S^1\times \{1\})$.
Moreover, let $\alpha$ be the circle $\rho(S^1\times \{1\}\times \{0\})$.
For each of these circles, we fix an orientation.

\begin{figure}[htbp]
	\centering
	\begin{picture}(210, 100)(0,0)
		\put(0,0){\includegraphics{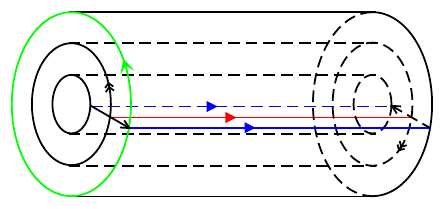}}
		\put(105,52){\color{blue} $\lambda$}
		\put(65,70){\color{green} $\mu$}
		\put(43,39){\color{red} $\alpha$}
	\end{picture}
	\caption{\label{fig:semi-bundle}
		A fundamental domain for the $\sigma$-action on $S^1\times S^1\times I$.}
\end{figure}

Note that $N$ can be Seifert fibered as $(\Sigma_{0,1},1/2,-1/2)$, where the two singular fibers are $\alpha=\rho(S^1\times \{1\}\times \{0\})$ and $\rho(S^1\times \{-1\}\times \{0\})$.
According to \cite[Proposition 1.13]{Hatcher2023}, it is irreducible. 
We can classify all essential surfaces in $N$ as follows (cf. the second paragraph in the proof of \cite[Lemma 14]{JRT2013}).

\begin{thm}
	\label{thm:twisted-bundle}
	Suppose that $F$ is a connected surface in $N=(\Sigma_{0,1},1/2,-1/2)$. 
    Then $F$ is essential if and only if it is isotopic to one of the following surfaces (see Figure \ref{fig:essential}):
	
	\begin{enumerate}
		\item a horizontal annulus, isotopic to $\rho(\{{\rm i}\}\times S^1\times[-1,1])$;
		
		\item a vertical annulus, isotopic to $\rho(S^1\times\{\pm{\rm i}\}\times[-1,1])$;
		
		\item a vertical torus, parallel to $\partial N=\rho(S^1\times S^1\times\{\pm 1\})$;
		
		\item a pseudo-vertical M\"obius band, isotopic to $\rho(S^1\times\{1\}\times[-1,1])$ or $\rho(S^1\times\{-1\}\times[-1,1])$;
		
		\item a pseudo-vertical Klein bottle, isotopic to $K=\rho(S^1\times S^1\times \{0\})$.
	\end{enumerate}
\end{thm}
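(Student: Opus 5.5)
The plan is to use the Seifert fibered structure $N=(\Sigma_{0,1},(2,1),(2,-1))$, in which both singular fibers have $\alpha_i=2$, and to split according to Theorem \ref{thm:pseudo-horizontal}. Since $b=1$, every essential connected surface in $N$ is isotopic either to a pseudo-vertical surface or to a horizontal surface, so it suffices to list these and then decide which of them are essential.

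For the pseudo-vertical case I would apply Lemma \ref{lem:pseudo-vertical}. Here $B_0$ is a pair of pants $\Sigma_{0,3}$, and $N(2,\pm1)=1$, because $L(2,\pm1)=\mathbb{RP}^3$ contains a projective plane.
\begin{itemize}
\item A vertical torus in $M_0=B_0\times S^1$ is $c\times S^1$ for an essential simple closed curve $c$ in $B_0$. Such a $c$ is parallel to one of the three boundary circles. If $c$ is parallel to $\partial B_1$ or $\partial B_2$, the torus bounds a solid torus $V_i$ and is compressible. If $c$ is parallel to $\partial B$, the torus is parallel to $\partial N$, which gives (3).
\item A vertical annulus is $a\times S^1$ for an essential arc $a$ in $B_0$ with both endpoints on $\partial B$. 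Such an arc separates $B_1$ from $B_2$, and this gives (2).
\item Type (2) of Lemma \ref{lem:pseudo-vertical} is a closed surface of genus $1+1=2$, namely $K$, which gives (5).
\item Type (3) of Lemma \ref{lem:pseudo-vertical} has genus $1$ with one boundary component, so it is a Möbius band around $\alpha$ or around the other singular fiber. This gives (4).
\end{itemize}

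For the horizontal case, a horizontal surface $F$ is a branched cover of $B=D^2$ of some degree $d$, with $-\chi(F)=d\left(-1+\tfrac{1}{2}+\tfrac{1}{2}\right)=0$. So $F$ is an annulus or a Möbius band. Since $\partial F$ is horizontal, no component of $\partial F$ is a fiber. Both $\partial F$ and the boundary of the pseudo-vertical Möbius band lie on the torus $\partial N$ and have fixed slopes. The Möbius band case would force $\partial F$ to have a slope that is even relative to the fiber, and I would rule this out by an explicit check on $\partial N$. The remaining horizontal annulus is $\rho(\{{\rm i}\}\times S^1\times[-1,1])$, which is a section-like $2$-fold cover of $D^2$ branched at the two cone points. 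Uniqueness up to isotopy follows because horizontal surfaces are determined by their boundary slope, which is fixed by the Euler number; this gives (1). One could instead lift to the double cover $T^2\times I$ and use Theorem \ref{thm:thickened-torus} there.

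Conversely, I must show that each listed surface is essential. The torus parallel to $\partial N$ is incompressible and $\partial$-incompressible because $N$ is not a solid torus, as $\pi_1(K)$ is not cyclic. The annuli are $\pi_1$-injective with boundary essential on $\partial N$, and they are not $\partial$-parallel, since cutting along them does not yield $A\times I$. For example, cutting along the vertical annulus gives two solid tori, each containing a singular fiber of multiplicity $2$, and neither is a product with respect to the annulus. The Möbius bands are $\pi_1$-injective; their boundary curves on $\partial N$ are $\lambda$-type curves with $\lambda=2\alpha$ in $H_1$, so they are not $\partial$-parallel. $K$ is a $\pi_1$-injective closed surface and is incompressible. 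For one-sided surfaces I would use the criterion that $\partial N(F)$ is incompressible.

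I expect the main obstacle to be the horizontal case. The difficulty is showing that there is exactly one horizontal isotopy class, which is the annulus, with no horizontal Möbius band and no horizontal surfaces of higher degree. The key point to establish is that the boundary slope together with connectedness forces $d=2$.
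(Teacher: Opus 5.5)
Your overall strategy is sound, but it differs from the paper's in how the orientable surfaces and the converse are handled. The paper does not classify orientable surfaces itself. It quotes \cite[Lemma 2.2]{SWW} for (1)--(3). For non-orientable $F$ it removes the horizontal case in one line: a horizontal surface is a branched cover of the orientable base $\Sigma_{0,1}$, hence orientable. So only pseudo-vertical M\"obius bands and Klein bottles remain, and these are pinned down by the three isotopy classes of arcs in $\Sigma_{0,3}$ joining distinct boundary circles.

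You instead extract all five types from Theorem \ref{thm:pseudo-horizontal} and Lemma \ref{lem:pseudo-vertical}. That is more self-contained, but it rests on two facts you only assert:
\begin{enumerate}
\item horizontal boundary slopes are forced, namely to the slope of $\mu$, which meets the fiber $\lambda$ once;
\item a connected horizontal surface is unique up to isotopy. This is true, e.g.\ because it is a fiber of a fibration $N\to S^1$ and $H^1(N;\mathbb{Z})\cong\mathbb{Z}$.
\end{enumerate}
Your parity argument against a horizontal M\"obius band does work once the slope is known: the order-$2$ cone points force $d$ to be even, while a single $\mu$-curve meets $\lambda$ once. The orientability observation makes it unnecessary, though.

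For the converse, the paper treats (4) and (5) with one trick that avoids one-sided subtleties. A ($\partial$-)compression would leave only disks and spheres. Doubling $(N,F)$ then gives a surface that is zero in $H_2(\widetilde N;\mathbb{Z}_2)$ yet meets the loop $\mu$ (resp.\ $\alpha$) once, a contradiction. Your $\pi_1$-injectivity argument is fine for incompressibility and for $K$. The standard fact that an incompressible, non-$\partial$-parallel annulus with boundary on a torus is $\partial$-incompressible handles (1) and (2).

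Two of your converse justifications need repair, however.
\begin{enumerate}
\item ``Cutting along them does not yield $A\times I$'' is false for the horizontal annulus. The map $\rho(z,w,t)\mapsto z^2$ exhibits $N$ as an annulus bundle over $S^1$ with $\rho(\{{\rm i}\}\times S^1\times[-1,1])$ as a fiber, so cutting along it gives exactly $A\times I$. The correct reason it is not $\partial$-parallel is that it is non-separating.
\item For the M\"obius bands, ``not $\partial$-parallel'' is automatic, since a torus has no non-orientable subsurface, and it says nothing about $\partial$-incompressibility. Incompressibility of the frontier of $N(F)$ alone does not supply it either. Here is a direct argument. A nontrivial $\partial$-compressing disk meets the M\"obius band in a non-separating arc, because a separating arc cuts off a disk. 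So $\partial$-compressing yields a disk $F'$ whose boundary differs from $\lambda$ by the boundary of a rectangle in $\partial N$. Hence $[\partial F']=[\lambda]\neq 0$ in $H_1(\partial N;\mathbb{Z}_2)$, and $\partial N$ would be compressible. That is impossible because $N$ is irreducible and $\pi_1(N)\cong\pi_1(K)$ is not cyclic.
\end{enumerate}

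You should also state two smaller points explicitly. Inessential curves and arcs in $B_0$ give compressible tori and $\partial$-parallel annuli. Uniqueness up to isotopy in (4) and (5) follows from uniqueness of the relevant arc in $\Sigma_{0,3}$, together with uniqueness of the incompressible surface in $V_i$ with given boundary.
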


\begin{figure}[htbp]
	\centering
	\begin{picture}(440, 360)(0,0)
		\put(0,0){\includegraphics{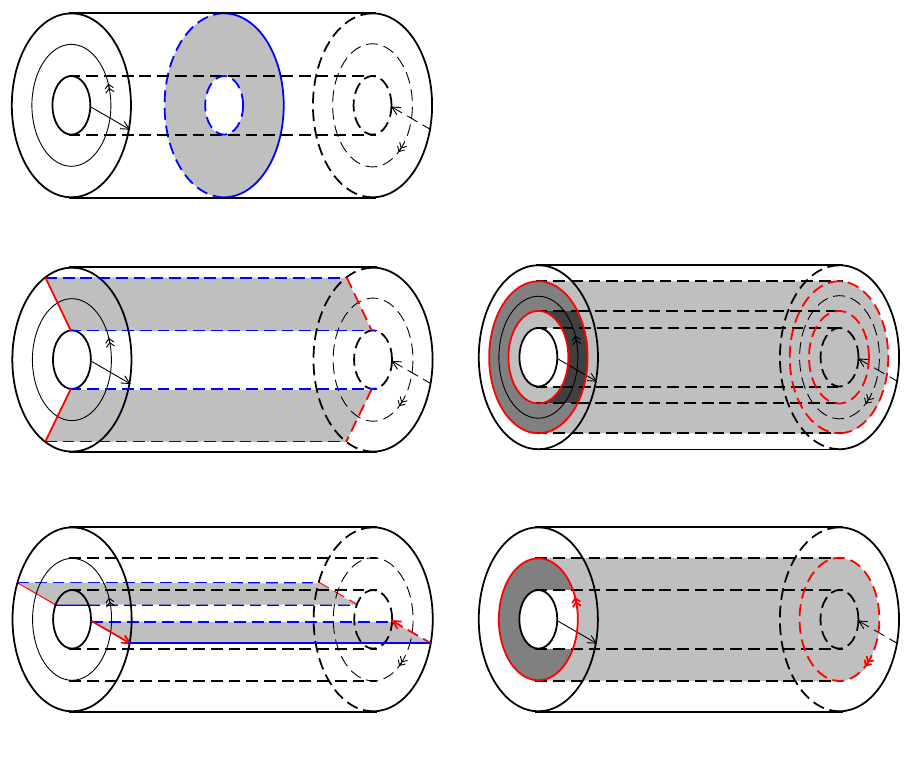}}
		\put(100,253){(1)}
		\put(100,132){(2)}
		\put(100,8){(4)}
		\put(328,132){(3)}
		\put(328,8){(5)}
	\end{picture}
	\caption{\label{fig:essential}
		Essential surfaces in a torus semi-bundle.}
\end{figure}

\begin{proof}
    According to \cite[Lemma 2.2]{SWW} (also see \cite[p. 9]{GMT2025}), if $F$ is orientable, then it is essential if and only if it is isotopic to one of (1), (2), (3).
    So we only need to consider non-orientable surfaces.

    If $F$ is horizontal, the projection from $N$ to the base surface $\Sigma_{0,1}$ induces a branched covering $F\to\Sigma_{0,1}$ and thus $F$ is orientable.
	Therefore, by Theorem \ref{thm:pseudo-horizontal}, we may assume that $F$ is pseudo-vertical and non-orientable.
    Then according to Lemma \ref{lem:pseudo-vertical}, $F$ is a M\"obius band or a Klein bottle.
	For the two singular fibers $\rho(S^1\times\{\pm 1\}\times\{1\})$, take open regular neighborhoods $V_1,V_2$.
	Let $N_0$ be $N-V_1-V_2$, fibered as $(\Sigma_{0,3};\,\,)=\Sigma_{0,3}\times S^1$.
    Then $F\cap N_0$ is a vertical annulus in $N_0$, and the projection of $F\cap N_0$ on $\Sigma_{0,3}$ is an arc with two endpoints on different boundary components.
	Up to isotopy, there are three such arcs.
    Since $F_0$ is determined by this arc and $F_0\cap V_i$ ($i=1,2$) is determined by $\partial F_0$ up to isotopy, we see that $F$ is isotopic to one of the surfaces in Figure \ref{fig:essential} (4) (5).
	
	To finish the proof, it suffices to show that the surfaces in (4) and (5) are all essential.
	Assume that $F$ is such a surface and has a nontrivial ($\partial$-)compressing disk $D$ in $N$.
	If we ($\partial$-)compress $F$ along $D$ to get another surface $F'$, we will have $\chi(F')>\chi(F)=0$. 
    Note that there is no embedded projective plane in $N$ for it must be incompressible.
    Thus each component of $F'$ can only be either a disk or a sphere.
	Double the pair $(N,F)$, i.e., take two copies $(N_1,F_1),(N_2,F_2)$ and glue them along boundary via the trivial homeomorphism, to obtain $(\widetilde{N},\widetilde{F})$.
	Also, there are two copies $D_1,D_2$ of $D$ in $\widetilde{N}$, and we can compress $\widetilde{F}$ along the disk(s) $D_1\cup D_2$. 
	The result surface is a union of two copies of $F'$, which can only be a union of spheres.
	Since the compressing does not change the surface class in $H_2(\widetilde{N};\mathbb{Z}_2)$ and $\widetilde{N}$ is irreducible by \cite[Proposition 1.13]{Hatcher2023}, we have $[\widetilde{F}]=0\in H_2(\widetilde{N};\mathbb{Z}_2)$.
	If $F$ is a M\"obius band in (4) (or the Klein bottle $K$ in (5), resp.), then comparing Figure \ref{fig:essential} with Figure \ref{fig:semi-bundle}, we see that the curve $\mu$ (or  $\alpha$ with a little perturbation, resp.) intersects $F$ at exactly one point. 
    So there is a simple closed curve in $\widetilde{N}$ intersecting $\widetilde{F}$ at exactly one point, which implies $[\widetilde{F}]\neq 0\in H_2(\widetilde{N};\mathbb{Z}_2)$, a contradiction.
\end{proof}


\section{\texorpdfstring{$\mathbb{Z}_2$}{Z2}-Thurston norm}
\label{sect:norm}

In this section we determine $\mathbb{Z}_2$-Thurston norms of all $\mathbb{Z}_2$-homology classes and construct $\mathbb{Z}_2$-taut surfaces realizing them for the following two kinds of 3-manifolds: torus bundles over $S^1$ (including non-orientable ones), and torus semi-bundles.
In particular, $\mathbb{Z}_2$-Thurston norms in Sol manifolds are all determined.

Note that if $M$ is an irreducible closed 3-manifold, then each nontrivial element $\alpha$ in $H_2(M;\mathbb{Z}_2)$ can be represented by an incompressible surface $F$.
Moreover, if there is no embedded projective plane in $M$, then by definition $F$ can be chosen as a $\mathbb{Z}_2$-taut surface.

\subsection{Torus bundles over \texorpdfstring{$S^1$}{S1}}

For $f\in{\rm Aut}(T^2)$, the \textbf{mapping torus} of $f$, denoted by $M_f$, is the manifold $T^2\times I/\sim_f$, where $\sim_f$ is defined by $(0,x)\sim_f(1,f(x))$ for each $x\in T^2$.
It is orientable if and only if $f$ is orientation preserving.
A 3-manifold is a torus bundle over $S^1$ if and only if $M=M_f$ for some $f\in{\rm Aut}(T^2)$.
Up to homeomorphism, the mapping torus $M_f$ of $f$ is determined by the matrix $A_f$:

\begin{thm}[{\cite[Theorem 2.6]{Hatcher2023}}]
	For $f,f'\in{\rm Aut}(T^2)$, the mapping tori $M_f,M_{f'}$ of $f,f'$ are homeomorphic if and only if $A_f$ is conjugate to $A_{f'}$ or $A_{f'}^{-1}$ in $GL(2,\mathbb{Z})$.
\end{thm}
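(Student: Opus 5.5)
The argument has two directions. The backward direction is an explicit map; the forward direction uses the group-theoretic structure of $M_f$ and then upgrades an algebraic isomorphism to a homeomorphism of mapping tori.

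\emph{If $A_f$ is conjugate to $A_{f'}$ or $A_{f'}^{-1}$, then $M_f\cong M_{f'}$.} Since the mapping class group of $T^2$ is $GL(2,\mathbb{Z})$ and isotopic monodromies give homeomorphic mapping tori, we may replace $f,f'$ by linear maps.

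If $A_{f'}=A_hA_fA_h^{-1}$ with $h$ linear, then $h\times\mathrm{id}_I$ on $T^2\times I$ respects the gluings, because
\begin{displaymath}
(0,h(x))\sim_{f'}(1,f'(h(x)))=(1,h(f(x))).
\end{displaymath}
It therefore descends to a homeomorphism $M_f\to M_{f'}$.

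For the inverse, the map $(t,x)\mapsto(1-t,x)$ identifies $M_f$ with $M_{f^{-1}}$. This handles the case $A_f\sim A_{f'}^{-1}$.

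\emph{If $M_f\cong M_{f'}$, then $A_f$ is conjugate to $A_{f'}^{\pm1}$.}

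\textbf{Step 1: reduce to $\pi_1$.} We have $\pi_1(M_f)=\mathbb{Z}^2\rtimes_{A_f}\mathbb{Z}$. So it suffices to show that any isomorphism $\phi:\pi_1(M_f)\to\pi_1(M_{f'})$ carries the fiber subgroup $\mathbb{Z}^2$ onto the fiber subgroup.

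Granting this, $\phi$ induces:
\begin{itemize}
\item an isomorphism $B\in GL(2,\mathbb{Z})$ on the fiber subgroups;
\item an isomorphism $\pm1$ on the quotients $\mathbb{Z}$.
\end{itemize}
The generator $t$ maps to $t'^{\pm1}v$ for some $v\in\mathbb{Z}^2$, and conjugation by $v$ is trivial on $\mathbb{Z}^2$. Compatibility of the conjugation actions then gives $BA_f B^{-1}=A_{f'}^{\pm1}$.

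\textbf{Step 2 (main obstacle): the fiber subgroup is canonical.} I would split by the type of $A_f$.

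\emph{Anosov case, $|\mathrm{tr}A_f|>2$.} The subgroup $\mathbb{Z}^2$ is the unique maximal normal abelian subgroup, and it is also the Fitting subgroup. To see this, take any normal abelian subgroup $H$ not contained in $\mathbb{Z}^2$. Then $H$ contains an element $t^kv$ with $k\ne0$. Commutators with $\mathbb{Z}^2$ force $(A_f^k-I)w\in H\cap\mathbb{Z}^2$ for all $w\in\mathbb{Z}^2$. This produces a finite-index subgroup of $\mathbb{Z}^2$ commuting with $t^kv$, which is impossible because $A_f^k$ has no eigenvalue $1$.

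\emph{Periodic and reducible cases.} The group is virtually $\mathbb{Z}^3$ or virtually Nil. Here the fiber subgroup need not be characteristic, and this is the delicate part. I would handle these cases using the finite list of conjugacy classes:
\begin{itemize}
\item finite order: orders $1,2,3,4,6$, together with the orientation-reversing involutions;
\item parabolic: $\pm\begin{pmatrix}1&n\\0&1\end{pmatrix}$ and the orientation-reversing analogues.
\end{itemize}
For these I would compare invariants of $M_f$:
\begin{itemize}
\item $H_1(M_f)=\mathbb{Z}\oplus\mathrm{coker}(A_f-I)$;
\item orientability;
\item for the parabolic classes, the Euler number $n$ of the Nil structure, up to sign;
\item for the periodic classes, the holonomy of the flat structure.
\end{itemize}
These invariants should distinguish the classes up to inversion.

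\textbf{Step 3: realize the isomorphism geometrically.} Since torus bundles are aspherical Haken manifolds, an isomorphism on $\pi_1$ is induced by a homotopy equivalence. Together with Step 1, this gives the result. This last step is not strictly needed for the algebraic conclusion, which Steps 1 and 2 already deliver.
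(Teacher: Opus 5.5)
The paper gives no proof of this statement; it quotes it from Hatcher's notes. The standard argument there is topological. The fiber torus is incompressible, and when $M_f$ is not Seifert fibered it is the only incompressible torus up to isotopy, so a homeomorphism can be isotoped to preserve fibers. The Seifert-fibered cases are then settled by the classification of Seifert fiberings.

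Your route is genuinely different and group-theoretic, and it can be made to work. Its payoff is that it shows $\pi_1(M_f)$ alone determines $A_f$ up to conjugacy and inversion, and in the Anosov case it needs no surface theory. The backward direction is correct. Your proof that the fiber subgroup is the unique maximal normal abelian subgroup of $\mathbb{Z}^2\rtimes_{A_f}\mathbb{Z}$ is also correct.

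Two repairs are needed in the Anosov part.

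\emph{The case split is wrong for orientation-reversing $f$.} If $\det A_f=-1$ and $\operatorname{tr}A_f\neq 0$ (e.g.\ $\left(\begin{smallmatrix}1&1\\1&0\end{smallmatrix}\right)$), then $A_f$ has no root-of-unity eigenvalue, even though $|\operatorname{tr}A_f|\le 2$. There are also no orientation-reversing parabolic classes. The correct dividing line is whether $A_f$ has a root-of-unity eigenvalue, and your argument applies verbatim to every $A_f$ that does not.

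\emph{The mixed case needs an argument.} When only $f$ is known to be Anosov, you must still show that $\phi(\mathbb{Z}^2_f)$ equals the fiber subgroup $\mathbb{Z}^2_{f'}$. Since $\mathbb{Z}^2_{f'}$ is normal abelian, it lies in $\phi(\mathbb{Z}^2_f)\cong\mathbb{Z}^2$. A proper containment would make $\phi(\mathbb{Z}^2_f)/\mathbb{Z}^2_{f'}$ a nontrivial subgroup of $\mathbb{Z}$, giving rank $3$. This argument also disposes of every pairing of an Anosov class with a non-Anosov one. Step 3 can be dropped.

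The non-Anosov half is only a sketch (``should distinguish''), and there is exactly one non-routine point in it.

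\emph{The easy separations.} The seven periodic classes are already separated by $H_1(M_f)=\mathbb{Z}\oplus\operatorname{coker}(A_f-I)$. For $I$, $-I$, $\left(\begin{smallmatrix}1&0\\0&-1\end{smallmatrix}\right)$, $\left(\begin{smallmatrix}0&1\\1&0\end{smallmatrix}\right)$ and the classes of order $3,4,6$, this gives $\mathbb{Z}^3$, $\mathbb{Z}\oplus\mathbb{Z}_2^2$, $\mathbb{Z}^2\oplus\mathbb{Z}_2$, $\mathbb{Z}^2$, $\mathbb{Z}\oplus\mathbb{Z}_3$, $\mathbb{Z}\oplus\mathbb{Z}_2$, $\mathbb{Z}$, so holonomy is not needed. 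Periodic and parabolic monodromies are separated by whether $\pi_1$ is virtually abelian. The family $B_n$ is separated by $H_1(M_{B_n})=\mathbb{Z}^2\oplus\mathbb{Z}_n$, and the two parabolic families by the rank of $H_1$.

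\emph{The non-routine point.} $\operatorname{coker}(-B_n-I)$ is $\mathbb{Z}_4$ or $\mathbb{Z}_2^2$ according to the parity of $n$. So $H_1$ and orientability cannot distinguish, say, $M_{-B_1}$ from $M_{-B_3}$. Your appeal to ``the Euler number of the Nil structure'' is only valid once you know it is a homeomorphism invariant, i.e.\ that Seifert fiberings of Nil manifolds are unique up to isomorphism. That theorem has to be cited.

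\emph{An algebraic replacement in your own spirit.} Let $\Gamma=\pi_1(M_{-B_n})$. It is not nilpotent, since $-B_n-I$ is invertible over $\mathbb{Q}$. Its index-$2$ subgroup $\mathbb{Z}^2\rtimes\langle t^2\rangle\cong\pi_1(M_{B_{2n}})$ is nilpotent. By Fitting's theorem every normal nilpotent subgroup of $\Gamma$ lies in this subgroup, so it is characteristic. Its abelianization $\mathbb{Z}^2\oplus\mathbb{Z}_{2n}$ then recovers $n$.

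With these additions the argument is complete.
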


The universal covering space of $M_f$ with $f\in{\rm Aut}(T^2)$ is homeomorphic to $\mathbb{R}^3$, and hence by \cite[Proposition 1.6]{Hatcher2023}, $M_f$ is irreducible.

\begin{lem}
	\label{lem:fiber}
	Suppose that $F$ is an incompressible surface in $M_f$ with $f\in{\rm Aut}(T^2)$.
	Then we can choose a torus fiber $T_0$ of $M_f$ such that $F\cap T_0$ is either empty or a union of several essential simple closed curves on $T_0$.
\end{lem}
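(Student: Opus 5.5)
The plan is to take a torus fiber $T_0$ transverse to $F$ and simplify $F\cap T_0$ by a standard innermost-disk argument, using the incompressibility of $F$, the incompressibility of the fiber, and the irreducibility of $M_f$. We may isotope $F$ rather than move $T_0$, since isotoping $F$ is equivalent to choosing a different fiber up to ambient isotopy; if needed we conjugate the isotopy back at the end. Among all surfaces isotopic to $F$ and transverse to a fixed fiber $T_0$, we choose one that minimizes the number of components of $F\cap T_0$. Note that $F\cap T_0$ is a disjoint union of simple closed curves, since $F$ is closed and $M_f$ is closed.

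Suppose some component $c$ of $F\cap T_0$ is inessential on $T_0$. Then it bounds a disk in $T_0$, and we take an innermost such curve $c_0$, bounding a disk $D\subset T_0$ with ${\rm Int}(D)\cap F=\emptyset$. Then $D$ is a compressing disk for $F$. Since $F$ is incompressible, $c_0$ bounds a disk $D'\subset F$. The union $D\cup D'$ is an embedded sphere, perhaps after first choosing $D'$ so that it is innermost in a suitable sense; indeed $D'\cap T_0$ may contain other curves, but $D\cap F=\partial D$, so $D\cup D'$ is embedded. Because $M_f$ is irreducible, this sphere bounds a ball $B$.

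We then isotope $F$ across $B$, replacing $D'$ by a parallel copy of $D$ pushed slightly off $T_0$. This removes $c_0$, together with any curves of $D'\cap T_0$, from the intersection, which contradicts minimality. Hence every curve of $F\cap T_0$ is essential on $T_0$, or the intersection is empty, as the statement requires.

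The main obstacle is the ball-swap step. One must check that pushing $D'$ across $B$ is a genuine isotopy of $F$ that does not create new intersections with $T_0$. The point is that $B$ may meet $T_0$ in more than $D$, or contain other parts of $F$. To handle the latter, I would choose $c_0$ innermost on $T_0$, so that $B\cap F=D'$. That this suffices follows from $F\cap{\rm Int}(D)=\emptyset$ and the fact that any other component of $F$ inside $B$ would be a closed incompressible surface in a ball, which is impossible (a sphere being excluded by the definition of incompressible). After pushing $F$ across $B$, the new intersection with $T_0$ is the old one minus the curves lying in $D'$, so the count strictly drops.
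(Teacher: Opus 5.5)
Your argument is correct and is essentially the paper's proof. You minimize the number of curves in $F\cap T_0$ and take an innermost inessential curve; incompressibility of $F$ gives a disk $D'\subset F$, irreducibility of $M_f$ gives a ball $B$ bounded by $D\cup D'$, and pushing $F$ across $B$ lowers the count. Your check that $B\cap F=D'$, so that the push is a genuine isotopy, fills in a detail the paper leaves implicit, and the incompressibility of the fiber that you mention is never actually needed.
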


\begin{proof}
	After an isotopy, we may assume that $F\cap T_0$ consists of $m$ simple closed curves with $m\geq 0$ minimized.
	If a component $c$ of $F\cap T_0$ bounds a disk $D$ in $T_0$, we may assume that $c$ is innermost, i.e., ${\rm Int}(D)$ contains no other component of $F\cap T_0$.
	Since $F$ is incompressible, $c$ also bounds a disk $D'$ in $F$.
	As $M_f$ is irreducible, the sphere $D\cup D'$ bounds a 3-ball $B$ in $M_f$. 
	We can push $F$ along $B$ to eliminate $c$ from $F\cap T_0$ and decrease $m$, a contradiction.
	It follows that each component of $F\cap T_0$ is an essential simple closed curve on $T_0$.
\end{proof}

\begin{cor}
	There is no embedded $\mathbb{R}P^2$ in $M_f$ for any $f\in{\rm Aut}(T^2)$.
\end{cor}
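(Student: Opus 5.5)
Suppose, for contradiction, that $P\subset M_f$ is an embedded projective plane. I would first show that $P$ is incompressible and then use Lemma \ref{lem:fiber} to cut $M_f$ along a fiber.

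For incompressibility, take any embedded disk $D$ with $D\cap P=\partial D$. Every simple closed curve on $\mathbb{R}P^2$ either bounds a disk in $P$ or is one-sided. Since $\partial D$ bounds the disk $D$ in $M_f$, it is null-homotopic in $M_f$. A one-sided curve on $P$ cannot be the boundary of an embedded disk meeting $P$ only in its boundary: the normal bundle of $P$ restricted to $\partial D$ is nontrivial, while $D$ provides a trivialization along $\partial D$. Hence $\partial D$ bounds a disk in $P$, so every compressing disk is trivial. Since $P$ is neither a sphere nor a disk, $P$ is incompressible in the sense of this paper. (Alternatively, one can use $\pi_1(M_f)$: it is torsion-free because the universal cover is $\mathbb{R}^3$, so $M_f$ is aspherical of finite dimension. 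The nontrivial element of $\pi_1(P)\cong\mathbb{Z}_2$ therefore maps trivially, and the same conclusion follows.)

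Next, apply Lemma \ref{lem:fiber} to $P$ to obtain a fiber $T_0$ such that $P\cap T_0$ is empty or a union of essential simple closed curves on $T_0$.

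In the first case $P\cap T_0=\emptyset$, and cutting $M_f$ along $T_0$ places $P$ inside $T^2\times I$. This is impossible, since $\pi_1(P)$ would then have to inject into $\pi_1(T^2\times I)=\mathbb{Z}^2$, which is torsion-free, or more simply because $P$ is incompressible and closed in $T^2\times I$. By Theorem \ref{thm:thickened-torus}, the only closed incompressible surface in $T^2\times I$ is the $\partial$-parallel torus, so $P$ cannot occur. The same theorem forbids $P$ from appearing as a closed component in the other case.

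In the second case, each curve of $P\cap T_0$ is essential in $T_0$ and hence nontrivial in $\pi_1(M_f)$, because the fiber injects on $\pi_1$. Every simple closed curve on $P$ either bounds a disk in $P$ (hence is trivial in $\pi_1(M_f)$) or is one-sided. A one-sided curve represents the element of order $2$ in $\pi_1(P)$, so its image in $\pi_1(M_f)$ is also trivial because $\pi_1(M_f)$ is torsion-free. This contradicts the essentiality of the curves, so $P\cap T_0=\emptyset$, and we are back in the first case.

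The main obstacle is stating the incompressibility of $P$ cleanly. The step I would rely on is that $\pi_1(M_f)$ is torsion-free, which holds because $M_f$ is aspherical as its universal cover is $\mathbb{R}^3$. This fact alone kills both cases, and it even yields a direct shortcut: $P$ being incompressible forces $\pi_1(P)\to\pi_1(M_f)$ to be injective, which is impossible for a torsion-free target.
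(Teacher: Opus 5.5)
Your main argument is correct, but it is organized differently from the paper's. The paper also starts from incompressibility of $P$ and Lemma~\ref{lem:fiber}. It then avoids a case split: after cutting along $T_0$, every piece of $P$ in $T^2\times I$ is incompressible, hence by Theorem~\ref{thm:thickened-torus} a torus, an annulus, or a non-orientable surface with two boundary components. So $\chi(P)=\sum_i\chi(P_i)\le 0$, contradicting $\chi(\mathbb{R}P^2)=1$. You instead use torsion-freeness of $\pi_1(M_f)$ and $\pi_1$-injectivity of the fiber to force $P\cap T_0=\emptyset$, so you need the classification only for closed surfaces. The paper's Euler-characteristic count treats both cases uniformly and matches the cut-and-classify scheme used in the rest of the section. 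Your write-up has the merit of actually proving that $P$ is incompressible, which the paper only asserts. In that normal-bundle argument, note explicitly that $M_f$ may be non-orientable. The normal bundle of $P$ along a one-sided $\partial D$ is nontrivial only because $\partial D$ is orientation-preserving in $M_f$, which holds since it is null-homotopic.

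Several side remarks, however, are wrong and should be removed. The ``alternative'' proof of incompressibility does not work: knowing that $\pi_1(P)\to\pi_1(M_f)$ is trivial says nothing about whether a one-sided curve bounds a compressing disk. Two other claims rest on the implication ``incompressible $\Rightarrow$ $\pi_1$-injective'': the claim in your first case that $\pi_1(P)$ would inject into $\pi_1(T^2\times I)$, and your closing ``shortcut''. That implication fails for one-sided surfaces. Since $N(4,1)=2$, Theorem~\ref{thm:BW1969} gives an embedded Klein bottle in $L(4,1)$, which is incompressible by Lemma~\ref{lem:incompressible}. Yet its infinite fundamental group cannot inject into $\mathbb{Z}_4$.

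Your torsion-freeness observation does give a much shorter proof, but by lifting rather than injectivity. Since $\pi_1(P)\to\pi_1(M_f)$ is trivial, the inclusion lifts to an embedding of $P$ into the universal cover $\mathbb{R}^3$, which is impossible for $\mathbb{R}P^2$. This bypasses incompressibility, Lemma~\ref{lem:fiber} and Theorem~\ref{thm:thickened-torus} entirely.
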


\begin{proof}
	Assume that there is an embedded $\mathbb{R}P^2$ in $M_f$, denoted by $F$.
	Then $F$ is incompressible.
	Take a torus fiber $T_0$ as in the above lemma.
	Cutting $M_f$ along $T_0$, we get a thickened torus $T^2\times I$ and $F$ becomes several connected pieces $F_1,\cdots,F_k$ properly embedded in it.
	However, each $F_i$ is incompressible and thus by Theorem \ref{thm:thickened-torus}, homeomorphic to one of the followings: the torus, the annulus, a non-orientable surface with two boundary components.
	As a consequence, $\chi(F_i)\leq 0$, and
	\begin{displaymath}
		\chi(F)=\sum_{i=1}^k\chi(F_i)\leq 0,
	\end{displaymath}
	a contradiction.
\end{proof}
 
For any given $f\in{\rm Aut}(T^2)$ with 
\begin{displaymath}
	A_f=
    \left(
	\begin{matrix}
		a & c \\
		b & d
	\end{matrix}
	\right) 
    \in GL(2,\mathbb{Z}),
\end{displaymath}
by an isotopy we may assume that $f$ fixes a point $x\in T^2$.
Let $\gamma_x,\gamma_\lambda,\gamma_\mu\in H_1(M_f;\mathbb{Z}_2)$ be represented by the images of $\{x\}\times I,\lambda\times\{0\},\mu\times\{0\}\subset T^2\times I$ in $M_f=T^2\times I/\sim_f$ respectively.
We have
\begin{displaymath}
	H_1(M_f;\mathbb{Z}_2)\cong 
    \mathbb{Z}_2\langle \gamma_x \rangle \oplus \mathbb{Z}_2 \langle \gamma_\lambda \rangle \oplus \mathbb{Z}_2 \langle \gamma_\mu \rangle / 
    \langle \gamma_\lambda-(a\gamma_\lambda+b\gamma_\mu),\gamma_\mu-(c\gamma_\lambda+d\gamma_\mu)\rangle,
\end{displaymath} 
and the modulo 2 intersection pairing gives $H_2(M_f;\mathbb{Z}_2)\cong H_1(M_f;\mathbb{Z}_2)$.
It can be verified that the number of nontrivial elements in 
\begin{displaymath}
	\mathbb{Z}_2 \langle \gamma_\lambda \rangle \oplus \mathbb{Z}_2 \langle \gamma_\mu \rangle/\langle \gamma_\lambda-(a\gamma_\lambda+b\gamma_\mu),\gamma_\mu-(c\gamma_\lambda+d\gamma_\mu)\rangle
\end{displaymath}
equals the number of $f$-invariant components in $\mathcal{C}^{i=2}(T^2)$.
To determine the $\mathbb{Z}_2$-Thurston norm on $H_2(M_f;\mathbb{Z}_2)$, we are to consider the $f$-action on $\mathcal{C}^{i=2}(T^2)$.
Recall that $\mathcal{C}^{i=2}(T^2)$ consists of three trees: $T_{j/k}$ with vertex set $V_{j/k}$, $j/k\in\{1/0,0/1,1/1\}$.
Below we let
\begin{displaymath}
	l_{j/k} = \inf\{d(v,f(v)): v\in V_{j/k}\}.
\end{displaymath}

\begin{lem}[{\cite[\S 6.4]{Serre1980}}]
	Suppose that $\varphi$ is an automorphism of a tree.
	Then one of the following holds:
	\begin{itemize}
		\item $\varphi$ is a \textbf{rotation}, i.e., $\varphi$ preserves a vertex;
		\item $\varphi$ is an \textbf{inversion}, i.e., $\varphi$ preserves an edge and exchanges the two endpoints;
		\item $\varphi$ is a \textbf{translation}, i.e., there is a $\varphi$-invariant straight path on which $\varphi$ induces a translation.
	\end{itemize}
\end{lem}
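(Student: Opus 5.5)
We prove the trichotomy by examining the minimal displacement of $\varphi$ on the tree $T$. Let $l=\min\{d(v,\varphi(v)) : v \text{ a vertex of } T\}$, which exists since distances are nonnegative integers. Choose a vertex $v$ realizing $l$.

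\emph{Case $l=0$.} Then $\varphi$ fixes $v$ and is a rotation.

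\emph{Setup for $l\ge 1$.} Let $[v,\varphi(v)]$ denote the unique geodesic from $v$ to $\varphi(v)$. Its edges are $e_1,\dots,e_l$ in order, and $e_1$ is incident to $v$.

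\emph{Case $l=1$ with $\varphi(e_1)=e_1$.} Here $e_1=\{v,\varphi(v)\}$, so $\varphi(\varphi(v))=v$ and the endpoints of $e_1$ are exchanged. Thus $\varphi$ is an inversion.

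\emph{Case $l\ge 1$ otherwise.} We show $\varphi$ is a translation. First we claim that $\varphi(e_1)\neq e_l$, i.e. the concatenation of $[v,\varphi(v)]$ and $[\varphi(v),\varphi^2(v)]=\varphi([v,\varphi(v)])$ does not backtrack at $\varphi(v)$.

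Suppose instead $\varphi(e_1)=e_l$, and let $w$ be the other endpoint of $e_1$. Then $\varphi(w)$ is the endpoint of $e_l$ different from $\varphi(v)$.
\begin{itemize}
\item If $l\ge 2$, then $\varphi(w)$ lies on $[v,\varphi(v)]$ at distance $l-1$ from $v$. Hence $d(w,\varphi(w))=l-2<l$, contradicting minimality.
\item If $l=1$, then $\varphi(e_1)=e_1$, which is the inversion case already treated.
\end{itemize}

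Since the concatenation has no backtracking and $T$ is a tree, it is a geodesic of length $2l$. Set
\begin{displaymath}
L=\bigcup_{n\in\mathbb{Z}}\varphi^n([v,\varphi(v)]).
\end{displaymath}
Apply $\varphi^n$ to the non-backtracking statement at $\varphi(v)$: each junction $\varphi^n(v)$ is also non-backtracking. A locally geodesic edge path in a tree is a geodesic, so $L$ is a bi-infinite straight path. It is $\varphi$-invariant by construction, and $\varphi$ shifts it by $l$, so $\varphi$ is a translation.

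The main obstacle is the non-backtracking step. Its key input is minimality of $l$, combined with the observation that backtracking shortens the displacement of the neighbour $w$ by $2$. The case $l=1$ is exactly where this shortening argument fails, and that failure is what produces inversions.
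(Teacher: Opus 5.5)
Your proof is correct. It is essentially the standard argument from Serre's \S 6.4, which the paper cites rather than reproduces: take a vertex $v$ of minimal displacement $l$, use minimality to show that $[v,\varphi(v)]\cup[\varphi(v),\varphi^2(v)]$ does not backtrack unless $l=1$ and $\varphi$ inverts $e_1$, and take the union of the $\varphi$-translates of $[v,\varphi(v)]$ as the invariant straight path. Your construction also shows that every vertex of this path has displacement exactly $l$, which is the form in which the paper uses the lemma in the proof of Proposition~\ref{prop:l}.
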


If $f$ fixes a component $T_{j/k}$ ($j/k\in\{1/0,0/1,1/1\}$), we can construct an embedded closed surface $F_{j/k}$ in $M_f=T^2\times I/\sim_f$ as follows, which intersects $T^2\times\{0\}$ at an essential simple closed curve $c_{j/k}\times\{0\}$ with $[c_{j/k}]\in V_{j/k}$.
\begin{itemize}
	\item If $f$ acts on $T_{j/k}$ as a rotation, up to isotopy there is an $f$-invariant essential simple closed curve $c_{j/k}$ on $T^2$ with $[c_{j/k}]\in V_{j/k}$. 
	Then $c_{j/k} \times I \subset T^2 \times I$ provides an embedded surface in $M_f$, denoted by $F_{j/k}$.
    It is either a torus or a Klein bottle, according to whether $f$ preserves the orientation of $c_{j/k}$.
	
	\item If $f$ acts on $T_{j/k}$ as an inversion or translation, there is an essential simple closed curve $c_{j/k}$ on $T^2$ with $[c_{j/k}]\in V_{j/k}$ and $d([c_{j/k}],[f(c_{j/k})])=l_{j/k}$.
	As shown in Section \ref{subsect:torus}, we can construct an embedded $\Pi_{l_{j/k},2}$ in $T^2\times I$ which is bounded by $(c_{j/k}\times\{0\})\cup(f(c_{j/k})\times\{1\})$.
	This provides an embedded $\Pi_{l_{j/k}+2}$ in $M_f$, denoted by $F_{j/k}$.
\end{itemize}
Denote the modulo 2 intersection number of $\alpha\in H_2(M_f;\mathbb{Z}_2)$ and $\beta\in H_1(M_f;\mathbb{Z}_2)$ by $\alpha\cdot\beta$.
We see that $[F_{j/k}]\cdot \gamma_\lambda=k$ and $[F_{j/k}]\cdot \gamma_\mu=j$.
Particularly, $F_{j/k}$ represents a nontrivial element in $H_2(M_f;\mathbb{Z}_2)$.

\begin{prop}
	\label{prop:Fjk}
	Suppose that $f\in{\rm Aut}(T^2)$ fixes $T_{j/k}$ with $j/k\in\{1/0,0/1,1/1\}$, and $F_{j/k}$ is the embedded surface in $M_f$ constructed as above. 
	Then $||[F_{j/k}]||_{\mathbb{Z}_2}= l_{j/k}$.
\end{prop}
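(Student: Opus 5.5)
The upper bound comes straight from the construction. If $f$ acts on $T_{j/k}$ as a rotation, then $l_{j/k}=0$, and $F_{j/k}$ is a torus or a Klein bottle with $\chi_-=0$. Otherwise $F_{j/k}\cong\Pi_{l_{j/k}+2}$, which has $\chi=-l_{j/k}$. Either way $||[F_{j/k}]||_{\mathbb{Z}_2}\le l_{j/k}$, so the work lies in the lower bound. We may assume $l_{j/k}>0$. Since $M_f$ is irreducible and contains no $\mathbb{R}P^2$, the class $[F_{j/k}]$ is represented by an incompressible $\mathbb{Z}_2$-taut surface $F$ with $-\chi(F)=||[F_{j/k}]||_{\mathbb{Z}_2}$. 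Every component of $F$ then has $\chi\le 0$, so $\chi_-(F)=-\chi(F)$. Note also that every essential curve on a fiber represents the zero class in $\mathbb{Z}_2$-homology or a class in some $V_{j'/k'}$, and this ties intersection numbers with $\gamma_\lambda,\gamma_\mu$ to slopes mod 2.

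Apply Lemma \ref{lem:fiber} to get a fiber $T_0$ with $F\cap T_0$ a union of essential curves, all parallel, say of slope $v$, with $m$ components. Cut $M_f$ along $T_0$. Then $F$ becomes an incompressible surface $F'$ in $T^2\times I$, and Corollary \ref{cor:thickened-torus} applies. Its boundary consists of $m$ copies of $v$ on $T^2\times\{0\}$ and $m$ copies of $f(v)$ on $T^2\times\{1\}$, and $\chi(F)=\chi(F')$.

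Consider the three cases of Corollary \ref{cor:thickened-torus} in turn. In case (1), $F$ is a union of tori parallel to fibers together with annuli glued up. In case (2), the vertical annuli force $f(v)=v$, and the $\partial$-parallel annuli contribute nothing to $\gamma_\lambda,\gamma_\mu$ beyond a reflection. In both of these cases we compute the intersection numbers $[F]\cdot\gamma_\lambda$ and $[F]\cdot\gamma_\mu$ mod 2. If $f(v)=v$ with $v\in V_{j/k}$, then $l_{j/k}=0$, a contradiction. If $v\notin V_{j/k}$, or $m$ is even, then $[F]\cdot\gamma_\lambda\equiv mq$ and $[F]\cdot\gamma_\mu\equiv mp$ (where $v=p/q$) do not match $(k,j)$, another contradiction. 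The key computation here is that $[F]\cdot\gamma_\lambda$ equals $m$ times the mod-2 intersection of $v$ with $\lambda$, since $\gamma_\lambda$ can be pushed into $T_0$; similarly for $\mu$.

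In case (3), the non-orientable component joins a curve of slope $v$ to one of slope $f(v)$, so $v\ne f(v)$. The $\partial$-parallel annuli pair up the remaining boundary curves on each side. By the same intersection count, nontriviality of $[F]\cdot(\gamma_\lambda,\gamma_\mu)=(k,j)$ forces $m$ odd and $v\in V_{j/k}$. The non-orientable piece has genus $d(v,f(v))$, hence $\chi=-d(v,f(v))$. Since the annuli have $\chi=0$, we get $-\chi(F)=d(v,f(v))\ge l_{j/k}$.

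The main obstacle is making the case analysis rigorous about how the $\partial$-parallel annuli and the parity of $m$ interact with the homology class, i.e.\ justifying the formula $[F]\cdot\gamma_\lambda\equiv m\,i(v,\lambda)$ mod 2, and ruling out stray configurations in cases (1) and (2). I would handle this by pushing $\gamma_\lambda$ and $\gamma_\mu$ into $T_0$, where they meet $F$ only along $F\cap T_0$, so each count is simply $m$ times a slope parity.
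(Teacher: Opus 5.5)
Your proposal is correct and follows essentially the paper's argument. You get the upper bound from the construction, then take an incompressible $\mathbb{Z}_2$-taut representative meeting a fiber $T_0$ (Lemma \ref{lem:fiber}) in $m$ curves of slope $v$. The mod $2$ intersection counts with $\gamma_\lambda,\gamma_\mu$ force $m$ odd and $v\in V_{j/k}$, and Corollary \ref{cor:thickened-torus} then gives $-\chi(F)=d(v,f(v))\ge l_{j/k}$.

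The differences are cosmetic. You run the case split before the parity count, which in case (1) implicitly uses that $\partial$-parallel annuli force $m$ even. You also let the $\partial$-parallel annuli contribute $\chi=0$ instead of isotoping them away. Your justification of $[F]\cdot\gamma_\lambda\equiv mq$, by pushing $\gamma_\lambda$ into $T_0$, supplies a step the paper states without comment.
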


\begin{proof}
	If $f$ acts on $T_{j/k}$ as a rotation, then $F_{j/k}$ is a torus or a Klein bottle and thus $||[F_{j/k}]||_{\mathbb{Z}_2}=0=l_{j/k}$. 
	Below we assume that $f$ acts on $T_{j/k}$ as an inversion or a translation.
	In this case, $F_{j/k}$ is a non-orientable surface of genus $l_{j/k}+2$ and thus $||[F_{j/k}]||_{\mathbb{Z}_2}\leq l_{j/k}$.
	Take a $\mathbb{Z}_2$-taut, incompressible surface $F$ such that $[F]=[F_{j/k}]$.
	By Lemma \ref{lem:fiber}, we may assume that $F$ intersects a torus fiber $T_0$ of $M_f$ at essential simple closed curves $c_1,c_2,\cdots,c_m$ with slope $p/q$.
	Since $mp\equiv[F_{j/k}]\cdot \gamma_\mu=j\bmod 2$ and $mq\equiv[F_{j/k}]\cdot \gamma_\lambda=k\bmod 2$, $m$ must be odd and $p/q\in V_{j/k}$.
	Cut $M_f$ along $T_0$ to obtain a thickened torus $T^2\times I$.
	Then $F$ becomes an incompressible surface in $T^2\times I$, denoted by $P$, with $\partial P$ consisting of $m$ $p/q$-curves on $T^2\times\{0\}$ and $m$ $f(p/q)$-curves on $T^2\times\{1\}$.
    Moreover, we may assume that among all components of $P$ there is no $\partial$-parallel annulus, for otherwise we can eliminate it by pushing $F$ in $M_f$.
    Note that $f(p/q)\neq p/q$.
    By Corollary \ref{cor:thickened-torus}, $P$ can only be a connected non-orientable surface of genus $d(p/q,f(p/q))$, with $m=1$.
    So $F$ is non-orientable and of genus $d(p/q,f(p/q))+2$, which implies $||[F_{j/k}]||_{\mathbb{Z}_2}=d(p/q,f(p/q))\geq l_{j/k}$.
	This completes the proof.
\end{proof}

The values of $l_{1/0},l_{0/1},l_{1/1}$ can be determined as follows.

\begin{prop}
	\label{prop:l}
	Suppose that $f\in{\rm Aut}(T^2)$ preserves $T_{j/k}$ for some $j/k\in\{1/0,0/1,1/1\}$, and $v\in V_{j/k}$ is an arbitrary vertex.
	
	\begin{enumerate}
		\item If $f^2(v)\neq v$, then $l_{j/k}$ equals $d(v,f^2(v))-d(v,f(v))$.
		\item If $f^2(v)=v$ and $d(v,f(v))$ is odd, then $l_{j/k}$ equals $1$.
		\item If $f^2(v)=v$ and $d(v,f(v))$ is even, then $l_{j/k}$ equals $0$.
	\end{enumerate}
\end{prop}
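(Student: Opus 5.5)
We argue purely combinatorially with $\varphi=f|_{T_{j/k}}$, an isometry of the tree $T_{j/k}$ (distance $d$ as in Lemma \ref{lem:distance}). Then $l_{j/k}=\inf_{v}d(v,\varphi(v))$ is the minimal displacement of $\varphi$, and we use Serre's trichotomy (rotation, inversion, translation). The two ingredients are the standard facts about tree isometries: a translation has an invariant axis $L$ (a bi-infinite geodesic) on which $\varphi$ shifts by $\ell>0$, and for every vertex $v$ we have $d(v,\varphi(v))=\ell+2\,d(v,L)$, so that $l_{j/k}=\ell$. For a rotation or an inversion, $\varphi^2$ fixes a vertex.

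First we treat case (1), $f^2(v)\neq v$. We claim $\varphi$ is a translation. If $\varphi$ were a rotation fixing $x$, let $m$ be the point where the geodesic $[v,x]$ first meets the fixed set of $\varphi$, at distance $r$ from $v$. Then $[v,m]\cup[m,\varphi(v)]$ is a geodesic unless $\varphi$ fixes the first edge of $[m,v]$, which contradicts the choice of $m$. So $d(v,\varphi(v))=2r$. The same argument for $\varphi^2$ (fixed set containing that of $\varphi$) shows $d(v,\varphi^2(v))\le 2r$. Since $\varphi^2(v)\ne v$ we could still have a rotation, and this is where care is needed. A cleaner route: $d(v,\varphi^2(v))-d(v,\varphi(v))$ equals $\ell$ for a translation, because the projections of $v,\varphi(v),\varphi^2(v)$ onto $L$ are $\ell$ apart successively, giving $d(v,\varphi^2(v))=2\ell+2d(v,L)$. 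For a rotation or inversion, the displacement of $\varphi^2$ is at most the displacement of $\varphi$, so the formula in (1) would return a value $\le 0$. We therefore have to show directly that $f^2(v)\ne v$ forces a translation. This is \emph{the main obstacle}, and it needs more than general tree theory: a general tree rotation can have order $>2$. The extra input is that $T_{j/k}$ embeds in $\mathbb{H}^2$ and that $f$ acts through $GL(2,\mathbb{Z})$. A finite-order element of $GL(2,\mathbb{Z})$ fixing a vertex $p/q$ has, after conjugating by Lemma \ref{lem:transitivity} to $p/q=0/1$, the form $\pm\begin{pmatrix}1&0\\ n&\pm1\end{pmatrix}$ up to sign. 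An element of order $>2$ of this form, hence elliptic of order 3, 4 or 6, fixes no rational slope. An infinite-order one is a power of a parabolic-type twist, and one checks it is not an automorphism fixing a vertex of $T_{j/k}$ with all other orbits finite. I would instead handle this concretely: show that if $\varphi$ fixes a vertex then $\varphi^2=\mathrm{id}$ on $T_{j/k}$. The stabilizer of $0/1$ in $PGL(2,\mathbb{Z})$ consists of the classes of $\begin{pmatrix}\pm1&0\\ n&1\end{pmatrix}$, and $T_{0/1}$ is preserved only when $n$ is even. Squaring gives either the identity (when the sign is $-$) or a shear by $2n$, which is the identity only when $n=0$. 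So a nontrivial even shear fixing $0/1$ would be a rotation with $\varphi^2(v)\ne v$. We must then check the formula directly: for the shear by $2n$, $d(v,\varphi^2(v))-d(v,\varphi(v))$ should equal $0$ for every $v$. I would verify this using the fact that the shear fixes $0/1$ and rotates the branches at $0/1$, so that $d(v,\varphi^m(v))=2d(v,0/1)$ for all $m\ne0$ whenever $v$'s branch moves. Thus (1) holds for rotations as well, with $0=l$. The inversion case is similar: $\varphi^2$ fixes both endpoints of an edge, so $d(v,\varphi^2(v))$ is $2\,d(v,e)$ or $0$, while $d(v,\varphi(v))=2d(v,e)+1$. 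Hence inversions with $\varphi^2(v)\ne v$ would give $-1$, so we must rule them out by the analogous matrix check, which I expect to work for the same reason.

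Cases (2) and (3) are then easy. If $\varphi^2(v)=v$, then $\varphi$ is not a translation, since translations have no periodic vertices. For a rotation every displacement is even, by the computation $d(v,\varphi(v))=2r$. For an inversion every displacement is odd, being $2d(v,e)+1$. So the parity of $d(v,\varphi(v))$ distinguishes inversion ($l=1$) from rotation ($l=0$).
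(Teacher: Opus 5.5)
Your skeleton is the paper's: Serre's trichotomy, projection onto the axis for a translation, a unique fixed vertex for a rotation, and bipartite parity for (2)--(3). However, case (1) is not finished.

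\textbf{Inversions.} You correctly note that for an inversion of an edge $e$ one has $d(v,f(v))=2d(v,e)+1>2d(v,e)\ge d(v,f^2(v))$, so the formula in (1) would be negative. It is therefore essential to show that every inversion of $T_{j/k}$ satisfies $f^2(v)=v$, and you only say you ``expect'' the matrix check to work. This fails for general tree automorphisms, so it has to come from the $GL(2,\mathbb{Z})$-action; the paper merely asserts ``$f$ is not an inversion'' here. The proof is short. By Lemma~\ref{lem:transitivity}, conjugate the inverted edge to $\{1/1,-1/1\}$. An integer matrix sending $(1,1)^T\mapsto\pm(1,-1)^T$ and $(1,-1)^T\mapsto\pm(1,1)^T$ is $\pm\begin{pmatrix}1&0\\0&-1\end{pmatrix}$ or $\pm\begin{pmatrix}0&1\\-1&0\end{pmatrix}$. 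Each has square $\pm I$, so $f^2$ fixes every slope.

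\textbf{Rotations.} Your assertion that $T_{0/1}$ is preserved only when $n$ is even is wrong. Any element fixing $0/1$ preserves $T_{0/1}$; odd $n$ only swaps $T_{1/0}$ and $T_{1/1}$. So your rotation case must treat every shear $p/q\mapsto p/(np+q)$ with $n\neq 0$, not only even ones. Your branch argument covers all of them once made precise:
\begin{itemize}
\item The neighbours of $0/1$ in $T_{0/1}$ are the slopes $2/q$ with $q$ odd.
\item The shear sends $2/q\mapsto 2/(q+2n)$, which has no periodic points. Hence $\mathrm{Fix}(f)=\mathrm{Fix}(f^2)=\{0/1\}$ and $d(v,f(v))=d(v,f^2(v))=2d(v,0/1)$.
\item The reflections $\begin{pmatrix}-1&0\\n&1\end{pmatrix}$ square to the identity, so they are excluded by $f^2(v)\neq v$.
\end{itemize}
This is exactly the paper's ``unique fixed point'' step, now justified.

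Finally, delete your earlier claims that $f^2(v)\neq v$ forces a translation and that a vertex-fixing $f$ has $f^2=\mathrm{id}$. Nontrivial shears refute both, as you yourself observe.
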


\begin{proof}
	(1) When $f^2(v)\neq v$, $f$ is not an inversion on $T_{j/k}$. 
	If $f$ acts on $T_{j,k}$ as a translation, then there is an $f$-invariant straight path $\gamma$ with $l_{j/k}=d(v',f(v'))$ for any $v'\in\gamma$. 
	Let $v'$ be the projection of $v$ on $\gamma$. 
	Then
	\begin{displaymath}
		\begin{split}
			& d(v,f^2(v))-d(v,f(v))\\
			=& (d(v,v')+d(v',f^2(v'))+d(f^2(v'),f^2(v)))-(d(v,v')+d(v',f(v'))+d(f(v'),f(v)))\\
			=& (d(v,v')+2l_{j/k}+d(v',v))-(d(v,v')+l_{j/k}+d(v',v))\\
			=& l_{j/k}
		\end{split}
	\end{displaymath}
	(see Figure \ref{fig:translation}). 
	If $f$ acts as a rotation with $f^2(v)\neq v$, then it has a unique fixed point on $T_{j/k}$.
    So we have $l_{j/k}=0$ and for any $f^k(v)\neq v$, $d(v,f^k(v))$ equals 2 times the distance between $v$ and the fixed point. 
    Thus $l_{j/k}=d(v,f^2(v))-d(v,f(v))$ still holds.
	
	(2) \& (3) When $f^2(v)=v$, $f$ acts on $T_{j/k}$ either as a rotation with $l_{j/k}=0$ or as an inversion with $l_{j/k}=1$.
	In either case we have $d(v,f(v))\equiv l_{j/k}\bmod 2$ and thus the conclusion follows.
\end{proof}

\begin{figure}[htbp]
	\centering
	\begin{picture}(240, 90)(0,0)
		\put(0,0){\includegraphics{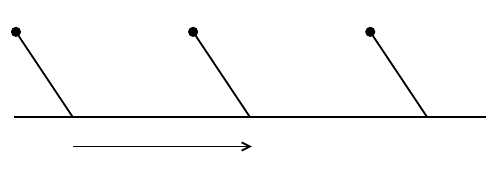}}
		\put(16,75){$v$}
		\put(100,75){$f(v)$}
		\put(185,75){$f^2(v)$}
		\put(38,42){$v'$}
		\put(122,42){$f(v')$}
		\put(207,42){$f^2(v')$}
		\put(73,10){$l_{j,k}$}
		\put(220,25){$\gamma$}
	\end{picture}
	\caption{\label{fig:translation}
		A translation on a tree.}
\end{figure}

\begin{cor}
	\label{cor:l}
	Suppose that the matrix of $f\in{\rm Aut}(T^2)$ is
	$A_f= \left(
	\begin{matrix}
		a & c \\
		b & d
	\end{matrix}
	\right)\in GL(2,\mathbb{Z})$.
	
	If $a$ is odd and $b$ is even, then
	\begin{displaymath}
		l_{1/0}=
		\begin{cases}
			0, & \text{ if } b(a+d)=0 \text{ and } b/2 \text{ is even};\\
			1, & \text{ if } b(a+d)=0 \text{ and } b/2 \text{ is odd};\\
			N(b(a+d),a^2+bc)-N(b,a), & \text{ if } b(a+d)\neq 0;
		\end{cases}
	\end{displaymath}
	otherwise $l_{1/0}=\infty$.
	
	If $c$ is even and $d$ is odd, then
	\begin{displaymath}
		l_{0/1}=
		\begin{cases}
			0, & \text{ if } c(a+d)=0 \text{ and } c/2 \text{ is even};\\
			1, & \text{ if } c(a+d)=0 \text{ and } c/2 \text{ is odd};\\
			N(c(a+d),bc+d^2)-N(c,d), & \text{ if } c(a+d)\neq 0;
		\end{cases}
	\end{displaymath}
	otherwise $l_{0/1}=\infty$.
	
	If $a+c$ and $b+d$ are both odd, then
	\begin{displaymath}
		l_{1/1}=
		\begin{cases}
			0, &\text{ if } (b-a)(a+c)+(d-c)(b+d)=0 \text{ and } (b+d-a-c)/2 \text{ is even};\\
			
			1, &\text{ if } (b-a)(a+c)+(d-c)(b+d)=0 \text{ and } (b+d-a-c)/2 \text{ is odd};\\
			
			& \hspace{-20pt} N((b-a)(a+c)+(d-c)(b+d),  a(a+c)+c(b+d))-N(b+d-a-c,a+c),\\
			
			&\text{ if } (b-a)(a+c)+(d-c)(b+d) \neq 0;\\
		\end{cases}
	\end{displaymath}
	otherwise $l_{1/1}=\infty$.
\end{cor}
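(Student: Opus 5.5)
We will apply Proposition \ref{prop:l} to each tree $T_{j/k}$, using a convenient base vertex: $v=1/0$ for $T_{1/0}$, $v=0/1$ for $T_{0/1}$, and $v=1/1$ for $T_{1/1}$. The distances will be computed with Corollary \ref{cor:distance}, i.e. $d(1/0,p/q)=N(q,p)$, $d(0/1,p/q)=N(p,q)$, and $d(1/1,p/q)=N(q-p,p)$.

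\emph{When is each tree preserved.} Since $f(1/0)=a/b$ and $f$ permutes the three trees, $f$ preserves $T_{1/0}$ iff $a/b\in V_{1/0}$, i.e. $a$ odd and $b$ even. Otherwise $f$ moves $T_{1/0}$ to another component, so $d(v,f(v))=\infty$ for every $v\in V_{1/0}$ and $l_{1/0}=\infty$. The same reasoning applies to $f(0/1)=c/d$ and to $f(1/1)=(a+c)/(b+d)$.

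\emph{Computing $f^2(v)$.} We have
\begin{displaymath}
	A_f^2=\left(\begin{matrix} a^2+bc & c(a+d)\\ b(a+d) & bc+d^2\end{matrix}\right),
\end{displaymath}
so $f^2(1/0)=(a^2+bc)/(b(a+d))$. Hence $d(1/0,f^2(1/0))=N(b(a+d),a^2+bc)$ and $d(1/0,f(1/0))=N(b,a)$. The condition $f^2(v)=v$ holds iff the second coordinate $b(a+d)$ vanishes. In that case:
\begin{itemize}
	\item If $b\neq0$, then $d(v,f(v))=N(b,a)\equiv b/2 \bmod 2$ by Theorem \ref{thm:BW1969}.
	\item If $b=0$, then $f(v)=v$ and the distance is $0$, consistent with $b/2$ even.
\end{itemize}
Proposition \ref{prop:l}(2),(3) then gives $l_{1/0}\in\{0,1\}$ according to the parity of $b/2$, and Proposition \ref{prop:l}(1) gives $l_{1/0}=N(b(a+d),a^2+bc)-N(b,a)$ when $b(a+d)\neq0$. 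The case $v=0/1$ is symmetric, using the second column $(c(a+d),bc+d^2)$ of $A_f^2$.

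\emph{The $1/1$ case and the main obstacle.} For $v=1/1$ we have $f(v)=(a+c)/(b+d)$, so $d(v,f(v))=N(b+d-a-c,a+c)$. Next,
\begin{displaymath}
	f^2(v)=\frac{a(a+c)+c(b+d)}{b(a+c)+d(b+d)},
\end{displaymath}
so $d(v,f^2(v))=N\bigl((b-a)(a+c)+(d-c)(b+d),\,a(a+c)+c(b+d)\bigr)$. Here $f^2(v)=v$ iff numerator equals denominator, i.e. the first argument vanishes. The parity statement in that case again follows from $N(p,q)\equiv p/2 \bmod 2$, handling $b+d-a-c=0$ separately (then $f(v)=v$).

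The main care point is a sign and normalization issue. $N(p,q)$ is defined via $|p|/|q|$, and $p/q$ versus $-p/-q$ give the same vertex. We will therefore check that $N$ is invariant under simultaneous sign changes, which is immediate from the definition using absolute values. We must also confirm the degenerate conventions $N(0,\pm1)=0$ and the case $q=0$, for example when $a^2+bc=0$ cannot occur with $b(a+d)\neq0$ by coprimality. Beyond this, the whole corollary is a direct substitution into Proposition \ref{prop:l}.
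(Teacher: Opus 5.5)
Your proposal is correct and follows the paper's proof: apply Proposition~\ref{prop:l} at the base vertices $1/0$, $0/1$, $1/1$, read off $f(v)$ and $f^2(v)$ from $A_f$ and $A_f^2$, convert to distances via Corollary~\ref{cor:distance}, and get the parity in the case $f^2(v)=v$ from Theorem~\ref{thm:BW1969}. Your separate handling of the degenerate cases $b=0$ (resp.\ $c=0$, $b+d-a-c=0$) is a small extra precision the paper leaves implicit, since there Theorem~\ref{thm:BW1969} does not literally apply and the convention $N(0,\pm1)=0$ is needed. Also, the second arguments $a^2+bc$, $bc+d^2$, $a(a+c)+c(b+d)$ are nonzero most simply because they are odd.
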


\begin{proof}
	From $f(1/0)=a/b$ we see that $l_{1/0}<\infty$ if and only if $a$ is odd and $b$ is even.
	When $l_{1/0}<\infty$, applying Proposition \ref{prop:l} and Corollary \ref{cor:distance} to $v=1/0$, $f(v)=a/b$ and $f^2(v)=(a^2+bc)/(b(a+d))$, we get the value of $l_{1/0}$.
	Note that $d(v,f(v))=N(b,a)\equiv b/2\bmod 2$ by Theorem \ref{thm:BW1969}.
	
	For $l_{0/1}$ or $l_{1/1}$, similarly we can take $v=0/1$, $f(v)=c/d$, $f^2(v)=(c(a+d))/(bc+d^2)$
	or $v=1/1$, $f(v)=(a+c)/(b+d)$, $f^2(v)=(a(a+c)+c(b+d))/(b(a+c)+d(b+d))$
	to get the result.
\end{proof}

Denote the $\mathbb{Z}_2$-homology class represented by $T^2\times\{0\}$ in $M_f=T^2\times I/\sim_f$ by $\tau$.
Obviously, we have $||\tau||_{\mathbb{Z}_2}=0$.
The modulo 2 intersection number $\tau \cdot \gamma_x$ is 1 and $\tau \cdot \gamma_\lambda=\tau \cdot \gamma_\mu=0$.
That is to say, $\tau$ is dual to $\gamma_x$.

\begin{prop}
	\label{prop:sum}
	For each $\alpha\in H_2(M_f;\mathbb{Z}_2)$, we have $||\alpha||_{\mathbb{Z}_2}=||\alpha+\tau||_{\mathbb{Z}_2}$.
\end{prop}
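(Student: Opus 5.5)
By symmetry it suffices to prove $||\alpha+\tau||_{\mathbb{Z}_2}\leq||\alpha||_{\mathbb{Z}_2}$ for every $\alpha$. Applying this inequality to $\alpha+\tau$ in place of $\alpha$ gives the reverse inequality, since $2\tau=0$. If $\alpha=0$, then $\alpha+\tau=\tau$ is represented by a torus fiber, so both norms vanish. Hence we may assume $\alpha\neq 0$.

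Since $M_f$ is irreducible and contains no embedded $\mathbb{R}P^2$, the remark at the beginning of Section \ref{sect:norm} lets us choose a $\mathbb{Z}_2$-taut, incompressible surface $F$ with $[F]=\alpha$. Then $||\alpha||_{\mathbb{Z}_2}=-\chi(F)$, and no component of $F$ is a sphere or a projective plane. By Lemma \ref{lem:fiber} there is a torus fiber $T_0$ such that $F\cap T_0$ is either empty or a union of disjoint essential simple closed curves $c_1,\dots,c_m$ on $T_0$.

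If $F\cap T_0=\emptyset$, then $F\sqcup T_0$ represents $\alpha+\tau$ and $\chi_-(F\sqcup T_0)=\chi_-(F)$, so we are done. Otherwise we perform the standard cut-and-paste (double-curve sum) of $F$ and $T_0$ along each curve $c_i$. At each $c_i$ we replace the transverse ``X''-shaped neighbourhood by one of its two smoothings. This produces an embedded closed surface $F'$ with $[F']=[F]+[T_0]=\alpha+\tau$ in $H_2(M_f;\mathbb{Z}_2)$. Because the $c_i$ are circles, $\chi(F')=\chi(F)+\chi(T_0)=\chi(F)$.

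The main point is to ensure that no component of $F'$ has positive Euler characteristic; otherwise $\chi_-(F')$ could exceed $-\chi(F')$. To see this, observe that $F'$ is assembled, along the curves $c_i$, from two kinds of pieces. The first kind are the components of $T_0-\bigcup c_i$, which are open annuli because the $c_i$ are essential and parallel. The second kind are the components of $F$ cut along $T_0$. Cutting $M_f$ along $T_0$ yields $T^2\times I$, and the latter pieces become incompressible surfaces there. The incompressibility of $F$ passes to these pieces by the usual innermost-disk argument, since the boundary curves are essential in $T_0$. Corollary \ref{cor:thickened-torus} then shows that each such piece is an annulus, a torus, or a non-orientable surface with two boundary components, so each piece has $\chi\leq 0$. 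Each component of $F'$ is a union of such pieces glued along circles, so it has $\chi\leq 0$ as well. Consequently
$$\chi_-(F')=-\chi(F')=-\chi(F)=||\alpha||_{\mathbb{Z}_2},$$
and therefore $||\alpha+\tau||_{\mathbb{Z}_2}\leq||\alpha||_{\mathbb{Z}_2}$.

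I expect the only delicate point to be the verification that the pieces of $F$ in $T^2\times I$ are incompressible, so that Corollary \ref{cor:thickened-torus} applies. This is what rules out sphere components in $F'$. Discarding such components instead would not work, because removing spheres lowers $\sum\chi_i$ and hence raises $\chi_-$.
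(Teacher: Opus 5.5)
Your proof is correct and is essentially the paper's argument. It uses the same steps: a $\mathbb{Z}_2$-taut incompressible representative $F$, a fiber $T_0$ from Lemma \ref{lem:fiber}, the sum $F+T_0$ with $\chi(F+T_0)=\chi(F)$, the classification in $T^2\times I$ to rule out components of positive Euler characteristic, and the symmetry $\alpha\leftrightarrow\alpha+\tau$. The step you flag as delicate is the incompressibility of the pieces of $F$ in $T^2\times I$, which the paper also uses without comment; it holds because $T_0$ is $\pi_1$-injective in $M_f$, so no $c_i$ bounds a disk in $M_f$. Hence no piece is a disk, and any disk in $F$ bounded by a curve in a piece misses $T_0$ and therefore lies in that piece.
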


Before proving the above proposition, we define the \textbf{sum} of two embedded surfaces.
Suppose that $F_1$ and $F_2$ are two embedded surfaces in a 3-manifold $M$, and $F_1\cap F_2$ consists of simple closed curves $c_1,\cdots,c_n$.
If for each $i=1,\cdots,n$, $c_i$ has a tubular neighborhood $N(c_i)$ homeomorphic to the solid torus, then we can construct an embedded surface, denoted by $F_1+F_2$, such that $\chi(F_1+F_2)=\chi(F_1)+\chi(F_2)$ and $[F_1+F_2]=[F_1]+[F_2]\in H_2(M;\mathbb{Z}_2)$.
In fact, we may assume that each $N(c_i)\cap F_j$ ($i=1,\cdots,n$, $j=1,2$) is either an annulus or a M\"obius band.
Note that $N(c_i)\cap F_1$ and $N(c_i)\cap F_2$ provide a trivialization for the normal bundle of $c_i$, whose total space is homeomorphic to $N(c_i)$.
Since $N(c_i)$ is a solid torus, we must have $N(c_i)\cap F_1\cong N(c_i)\cap F_2$.
In fact, cutting $N(c_i)$ along a meridian disk, up to isotopy we get the upper left or the upper right in Figure \ref{fig:sum}. 
Then we can do surgery in each $N(c_i)$ as in the figure and obtain an embedded surface $F_1+F_2$ in $M$. 
It intersects each $N(c_i)$ at one or two annuli, which implies $\chi(F_1+F_2)=\chi(F_1)+\chi(F_2)$.

\begin{figure}[htbp]
	\centering
	\begin{picture}(350, 190)(0,0)
		\put(0,0){\includegraphics{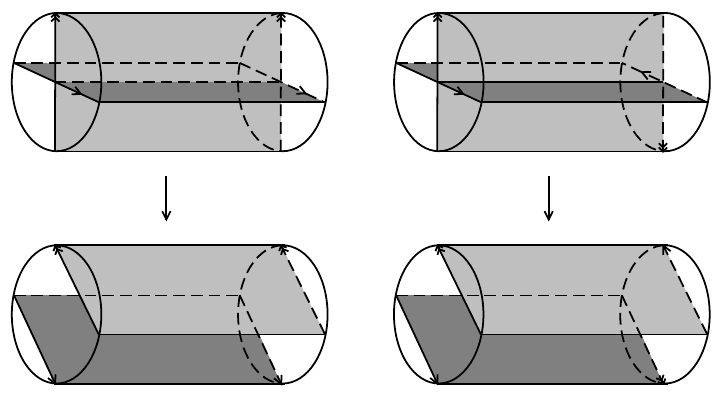}}
		\put(0,108){$N(c_i)$}
		\put(75,154){$c_i$}
		\put(8,145){$F_1$}
		\put(75,122){$F_2$}
		\put(84,93){surgery}
	\end{picture}
	\caption{\label{fig:sum}
		The sum of two embedded surfaces.}
\end{figure}

\begin{proof}[Proof of Theorem \ref{prop:sum}]
	We may assume that $\alpha\neq 0,\tau$.
	Let $F$ be a $\mathbb{Z}_2$-taut, incompressible, closed surface in $M_f$ with $[F]=\alpha$.
	By Lemma \ref{lem:fiber}, we may assume that $F$ intersects a torus fiber $T_0$ of $M_f$ at several essential simple closed curves.
	Take $F+T_0$ with $[F+T_0]=\alpha+\tau$ and $\chi(F+T_0)=\chi(F)$.
	Since each component of $F-F\cap T_0$ has non-positive Euler characteristic (by Theorem \ref{thm:thickened-torus}), there is no sphere component in $F+T_0$.
	Therefore, $||\alpha||_{\mathbb{Z}_2}=-\chi(F)=-\chi(F+T_0)\geq ||\alpha+\tau||_{\mathbb{Z}_2}$.
	Also, for $\alpha+\tau$ we have $||\alpha+\tau||_{\mathbb{Z}_2}\geq ||\alpha+\tau+\tau||_{\mathbb{Z}_2}=||\alpha||_{\mathbb{Z}_2}$.
	So we have $||\alpha||_{\mathbb{Z}_2}=||\alpha+\tau||_{\mathbb{Z}_2}$.
\end{proof}

Now the $\mathbb{Z}_2$-Thurston norm on $H_2(M_f;\mathbb{Z}_2)$ can be determined:

\begin{thm}
	\label{thm:bundle}
	For $f\in{\rm Aut}(T^2)$ with matrix 
	$A_f= \left(
	\begin{matrix}
		a & c \\
		b & d
	\end{matrix}
	\right)\in GL(2,\mathbb{Z})$, 
	$H_2(M_f;\mathbb{Z}_2)$ is
	\begin{itemize}
		\item $\mathbb{Z}_2 \langle \tau \rangle$, if 
		$A_f \equiv \left(
		\begin{matrix}
			1 & 1 \\
			1 & 0
		\end{matrix}
		\right)$
		or 
		$\left(
		\begin{matrix}
			0 & 1 \\
			1 & 1
		\end{matrix}
		\right)
		\bmod 2$;
		
		\item $\mathbb{Z}_2 \langle \tau \rangle \oplus \mathbb{Z}_2 \langle [F_{1/0}] \rangle$, if 
		$A_f \equiv \left(
		\begin{matrix}
			1 & 1 \\
			0 & 1
		\end{matrix}
		\right)
		\bmod 2$;
		
		\item $\mathbb{Z}_2 \langle \tau \rangle \oplus \mathbb{Z}_2 \langle [F_{0/1}] \rangle$, if 
		$A_f \equiv \left(
		\begin{matrix}
			1 & 0 \\
			1 & 1
		\end{matrix}
		\right)
		\bmod 2$;
		
		\item $\mathbb{Z}_2 \langle \tau \rangle \oplus \mathbb{Z}_2 \langle [F_{1/1}] \rangle$, if 
		$A_f \equiv \left(
		\begin{matrix}
			0 & 1 \\
			1 & 0
		\end{matrix}
		\right)
		\bmod 2$;
		
		\item $\mathbb{Z}_2 \langle \tau \rangle \oplus \mathbb{Z}_2 \langle [F_{0/1}] \rangle \oplus \mathbb{Z}_2 \langle [F_{1/0}] \rangle$, if 
		$A_f \equiv \left(
		\begin{matrix}
			1 & 0 \\
			0 & 1
		\end{matrix}
		\right)
		\bmod 2$.
	\end{itemize}
	The $\mathbb{Z}_2$-Thurston norms of all elements are: $||0||_{\mathbb{Z}_2}$; $||\tau||_{\mathbb{Z}_2}=0$, realized by a torus fiber; and if $F_{j/k}$ is defined, $||[F_{j/k}]||_{\mathbb{Z}_2}=l_{j/k}$, realized by $F_{j/k}$; $||[F_{j/k}]+\tau||_{\mathbb{Z}_2}=l_{j/k}$, realized by the sum of $F_{j/k}$ and a torus fiber.
\end{thm}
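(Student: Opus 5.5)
The proof splits into a homology computation and a norm computation, and the norm part is mostly assembling results already proved.

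For the group structure, I will use the presentation of $H_1(M_f;\mathbb{Z}_2)$ given before the lemma on tree automorphisms, together with the duality $H_2(M_f;\mathbb{Z}_2)\cong H_1(M_f;\mathbb{Z}_2)$ coming from the mod 2 intersection pairing. The summand $\mathbb{Z}_2\langle\gamma_x\rangle$ is always free and is dual to $\tau$. The remaining summand is the coinvariant quotient $\mathbb{Z}_2^2/\operatorname{im}(I-\bar A_f)$, where $\bar A_f$ is the reduction of $A_f$ mod 2. The group $GL(2,\mathbb{Z}_2)\cong S_3$ has six elements, and I will check each one.
\begin{itemize}
\item For the two $3$-cycles, $I-\bar A_f$ is invertible, so the quotient is trivial.
\item For the three transvections listed in the statement, the image of $I-\bar A_f$ has rank $1$, so the quotient is $\mathbb{Z}_2$.
\item For the identity, the quotient is $\mathbb{Z}_2^2$.
\end{itemize}
These counts agree with the number of $f$-invariant trees, since $\bar A_f$ permutes the three nonzero vectors $1/0,0/1,1/1$ of $\mathbb{Z}_2^2$, which are the labels of the trees $T_{j/k}$. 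For example, if $\bar A_f=\begin{pmatrix}1&1\\0&1\end{pmatrix}$, then $1/0$ is fixed and the other two are swapped, so only $T_{1/0}$ is invariant.

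To identify generators, I will use the intersection numbers $[F_{j/k}]\cdot\gamma_\lambda=k$, $[F_{j/k}]\cdot\gamma_\mu=j$ and $[F_{j/k}]\cdot\gamma_x$ computed before Proposition \ref{prop:Fjk}, together with $\tau\cdot\gamma_\lambda=\tau\cdot\gamma_\mu=0$. These show that $\tau$ and the available classes $[F_{j/k}]$ pair nondegenerately with $H_1$. Hence they are linearly independent, and by the dimension count they form a basis. In the identity case I use $F_{1/0}$ and $F_{0/1}$. The class of $F_{1/1}$ is then a sum of these and $\tau$, so it needs no separate treatment. If some $[F_{j/k}]\cdot\gamma_x=1$, I will replace $F_{j/k}$ by $F_{j/k}+\tau$ in the independence argument. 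This does not affect the span.

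The norms follow from earlier results. Proposition \ref{prop:Fjk} gives $\|[F_{j/k}]\|_{\mathbb{Z}_2}=l_{j/k}$, and Proposition \ref{prop:sum} gives $\|[F_{j/k}]+\tau\|_{\mathbb{Z}_2}=l_{j/k}$. For realization, the surface $F_{j/k}+T_0$ has the right class and Euler characteristic $\chi(F_{j/k})$, with no sphere components, exactly as in the proof of Proposition \ref{prop:sum}. The value $\|\tau\|_{\mathbb{Z}_2}=0$ is realized by a fiber.

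The main obstacle is the identity case $\bar A_f=I$, where the group is $\mathbb{Z}_2^3$ and there is an eighth class, $[F_{0/1}]+[F_{1/0}]$ (possibly plus $\tau$), not literally covered by the list. I will show that this class equals $[F_{1/1}]$ or $[F_{1/1}]+\tau$. To do this, I compare intersection numbers with $\gamma_\lambda$ and $\gamma_\mu$: both sides give $(1,1)$. Then I fix the $\gamma_x$-coordinate by adding $\tau$ if needed. Proposition \ref{prop:Fjk} and Proposition \ref{prop:sum} then give this class norm $l_{1/1}$, realized by $F_{1/1}$ or by $F_{1/1}$ plus a torus fiber. This shows that every element in the list has its norm determined.
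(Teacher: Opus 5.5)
Your proposal is correct and follows essentially the same route as the paper: identify $H_2(M_f;\mathbb{Z}_2)$ with $H_1(M_f;\mathbb{Z}_2)$ via the mod 2 intersection pairing, detect generators through their pairings with $\gamma_x,\gamma_\lambda,\gamma_\mu$ case by case mod 2, and read off the norms from Propositions \ref{prop:Fjk} and \ref{prop:sum}. You also check explicitly that $[F_{1/0}]+[F_{0/1}]$ equals $[F_{1/1}]$ or $[F_{1/1}]+\tau$ when $A_f\equiv I \bmod 2$, which the paper leaves implicit. Two small corrections: the paper never computes $[F_{j/k}]\cdot\gamma_x$, and you do not need to replace $F_{j/k}$ by $F_{j/k}+\tau$, because $\tau$ pairs only with $\gamma_x$, so the unknown $\gamma_x$-entries do not affect the determinant of the pairing matrix.
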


\begin{proof}
	Note that the modulo 2 intersection pairing gives an isomorphism between $H_2(M_f;\mathbb{Z}_2)$ and 
	\begin{displaymath}
		H_1(M_f;\mathbb{Z}_2)\cong \mathbb{Z}_2\langle \gamma_x \rangle \oplus
		\mathbb{Z}_2 \langle \gamma_\lambda \rangle \oplus \mathbb{Z}_2 \langle \gamma_\mu \rangle/\langle \gamma_\lambda-(a\gamma_\lambda+b\gamma_\mu),\gamma_\mu-(c\gamma_\lambda+d\gamma_\mu)\rangle.
	\end{displaymath} 
	
	If 
	$A_f \equiv \left(
	\begin{matrix}
		1 & 1 \\
		1 & 0
	\end{matrix}
	\right)$
	or 
	$\left(
	\begin{matrix}
		0 & 1 \\
		1 & 1
	\end{matrix}
	\right)
	\bmod 2$,
	then $f$ acts on $\{T_{1/0},T_{0/1},T_{1/1}\}$ transitively, and $H_1(M_f;\mathbb{Z}_2) \cong \mathbb{Z}_2 \langle \gamma_x \rangle$.
	Since $\tau$ is dual to $\gamma_x$, we have $H_2(M_f;\mathbb{Z}_2) \cong \mathbb{Z}_2 \langle \tau \rangle$.
	
	If 
	$A_f \equiv \left(
	\begin{matrix}
		1 & 1 \\
		0 & 1
	\end{matrix}
	\right)
	\bmod 2$,
	then $f$ fixes $T_{1/0}$ and exchanges $T_{0/1},T_{1/1}$, and $H_1(M_f;\mathbb{Z}_2) \cong \mathbb{Z}_2 \langle \gamma_x \rangle \oplus \mathbb{Z}_2 \langle \gamma_\mu \rangle$. 
	Since $[F_{1/0}]\cdot \gamma_\mu=1$, according to the modulo 2 intersection pairing we have $H_2(M_f;\mathbb{Z}_2) \cong \mathbb{Z}_2 \langle \tau \rangle \oplus \mathbb{Z}_2 \langle [F_{1/0}] \rangle$.
	
	If 
	$A_f \equiv \left(
	\begin{matrix}
		1 & 0 \\
		1 & 1
	\end{matrix}
	\right)
	\bmod 2$,
	then $f$ fixes $T_{0/1}$ and exchanges $T_{1/0},T_{1/1}$, and $H_1(M_f;\mathbb{Z}_2) \cong \mathbb{Z}_2 \langle \gamma_x \rangle \oplus \mathbb{Z}_2 \langle \gamma_\lambda \rangle$.
	Since $[F_{0/1}]\cdot \gamma_\lambda=1$, we have $H_2(M_f;\mathbb{Z}_2) \cong \mathbb{Z}_2 \langle \tau \rangle \oplus \mathbb{Z}_2 \langle [F_{0/1}] \rangle$.
	
	If 
	$A_f \equiv \left(
	\begin{matrix}
		0 & 1 \\
		1 & 0
	\end{matrix}
	\right)
	\bmod 2$,
	then $f$ fixes $T_{1/1}$ and exchanges $T_{0/1},T_{1/0}$, and $H_1(M_f;\mathbb{Z}_2) \cong \mathbb{Z}_2 \langle \gamma_x \rangle \oplus \mathbb{Z}_2 \langle \gamma_\lambda \rangle$. 
	Since $[F_{1/1}]\cdot \gamma_\lambda=1$, we have $H_2(M_f;\mathbb{Z}_2) \cong \mathbb{Z}_2 \langle \tau \rangle \oplus \mathbb{Z}_2 \langle [F_{1/1}] \rangle$.
	
	If 
	$A_f \equiv \left(
	\begin{matrix}
		1 & 0 \\
		0 & 1
	\end{matrix}
	\right)
	\bmod 2$,
	then $f$ fixes each component of $\mathcal{C}^{i=2}(T^2)$, and
	$H_1(M_f;\mathbb{Z}_2) \cong \mathbb{Z}_2 \langle \gamma_x \rangle \oplus \mathbb{Z}_2 \langle \gamma_\lambda \rangle \oplus \mathbb{Z}_2 \langle \gamma_\mu \rangle$. 
	Therefore, $H_2(M_f;\mathbb{Z}_2) \cong \mathbb{Z}_2 \langle \tau \rangle \oplus \mathbb{Z}_2 \langle [F_{0/1}] \rangle \oplus \mathbb{Z}_2 \langle [F_{1/0}] \rangle$.
	
	By definition, $||0||_{\mathbb{Z}_2}=||\tau||_{\mathbb{Z}_2}=0$.
	And if $F_{j/k}$ is defined, by Proposition \ref{prop:Fjk} we have $||[F_{j/k}]||_{\mathbb{Z}_2}=||[F_{j/k}]+\tau||_{\mathbb{Z}_2}=l_{j/k}$.
\end{proof}

\subsection{Torus semi-bundles}

Now consider torus semi-bundles. 
The universal covering space of any torus semi-bundle is also homeomorphic to $\mathbb{R}^3$.
So torus semi-bundles are irreducible. 
Moreover, since they are orientable and not homeomorphic to $\mathbb{R}P^3$, there is no embedded projective plane in them.

Take $N,K,\lambda,\mu,\alpha$ as in Section \ref{subsect:essential}.
Each torus semi-bundle can be constructed by gluing two copies $N_1,N_2$ of $N$ along their boundaries, hence can be denoted by $N_f=N_1\cup_f N_2$ with $f:\partial N_2\xrightarrow{\cong} \partial N_1$. 
Let $K_i,\lambda_i,\mu_i,\alpha_i\subset N_i$ ($i=1,2$) be the corresponding copies of $K,\lambda,\mu,\alpha\subset N$ respectively.
The isomorphism $f_*:H_1(\partial N_2;\mathbb{Z})\to H_1(\partial N_1;\mathbb{Z})$ corresponds to a matrix in $GL(2,\mathbb{Z})$, also denoted by $A_f$:
\begin{displaymath}
	(\begin{matrix}
		f_*[\lambda_2] & f_*[\mu_2]
	\end{matrix})
	=(\begin{matrix}
		[\lambda_1] &[\mu_1]
	\end{matrix})\,A_f
	=(\begin{matrix}
		[\lambda_1] &[\mu_1]
	\end{matrix})\,
	\left(
	\begin{matrix}
		a & c \\
		b & d
	\end{matrix}
	\right).
\end{displaymath}
The homology groups of $N_f$ are
\begin{displaymath}
	H_1(N_f;\mathbb{Z}) \cong  \dfrac{\mathbb{Z}\langle[\alpha_1]\rangle \oplus \mathbb{Z}_2\langle[\mu_1]\rangle \oplus \mathbb{Z}\langle[\alpha_2]\rangle \oplus \mathbb{Z}_2\langle[\mu_2]\rangle}{\langle 2[\alpha_2]-(2a[\alpha_1]+b[\mu_1]),\, [\mu_2]-(2c[\alpha_2]+d[\mu_1])\rangle},
\end{displaymath}
\begin{displaymath}
	\begin{split}
		H_1(N_f;\mathbb{Z}_2) & \cong  \dfrac{\mathbb{Z}_2\langle[\alpha_1]\rangle \oplus \mathbb{Z}_2\langle[\mu_1]\rangle \oplus \mathbb{Z}_2\langle[\alpha_2]\rangle \oplus \mathbb{Z}_2\langle[\mu_2]\rangle}{\langle -b[\mu_1],\, [\mu_2]-d[\mu_1]\rangle}\\
		& \cong 
		\begin{cases}
			\mathbb{Z}_2\langle[\alpha_1]\rangle \oplus \mathbb{Z}_2\langle[\alpha_2]\rangle, & \text{ if } b \text{ is odd}; \\
			\mathbb{Z}_2\langle[\alpha_1]\rangle \oplus \mathbb{Z}_2\langle[\alpha_2]\rangle \oplus \mathbb{Z}_2\langle[\mu_1]\rangle \text{ with } [\mu_2]=[\mu_1], & \text{ if } b \text{ is even},
		\end{cases}
	\end{split}
\end{displaymath}
and $H_2(N_f;\mathbb{Z}_2)\cong H_1(N_f;\mathbb{Z}_2)$, given by the modulo 2 intersection pairing.
Since $[K_1]\cdot[\alpha_1]=[K_2]\cdot[\alpha_2]=1$, we see that $H_2(N_f;\mathbb{Z}_2)$ can be viewed as a direct sum of two summands: one is $\mathbb{Z}_2\langle[K_1]\rangle \oplus \mathbb{Z}_2\langle[K_2]\rangle$, which is dual to $\mathbb{Z}_2\langle[\alpha_1]\rangle \oplus \mathbb{Z}_2\langle[\alpha_2]\rangle$; and the other is trivial or $\mathbb{Z}_2$ according to whether $b$ is odd or even.

Below we may assume that $b$ is even, and $F$ is a $\mathbb{Z}_2$-taut, incompressible surface in $N_f$ which generates the second summand $\mathbb{Z}_2$ in $H_2(N_f;\mathbb{Z}_2)$ with $[F]\cdot[\mu_1]=[F]\cdot[\mu_2]=1$.
Let $F_2$ be $F\cap N_2$.
It is incompressible in $N_2$.
If it is $\partial$-compressible, we can $\partial$-compress it along a disk $D$, or equivalently, push part of $F$ into $N_1$ along a regular neighborhood of $D$.
By this means, we can make $F_2$ $\partial$-incompressible.
Then we cut $N_1$ along a $\partial$-parallel torus, into a thickened torus $T^2\times I$ and $N_1'\cong N_1$.
With a similar argument, we may assume that the part of $F$ in $N_1'$, denoted by $F_1$, is essential.
Let $F_T$ be the part of $F$ in the thickened torus $T^2\times I$, which is incompressible.
According to Corollary \ref{cor:thickened-torus} and Theorem \ref{thm:twisted-bundle}, the structures of $F_1,F_2,F_T$ are known. 
We now discuss them case by case.
Since $[F]\cdot[\mu_1]=[F]\cdot[\mu_2]=1$, we see that in each $F_i$ ($i=1,2$) there is exactly one component of type (4) in Theorem \ref{thm:twisted-bundle}, for components of other types contribute nothing to the module 2 intersection number with $[\mu_i]$.
Then the other components can only be of type (2).
So $F_i$ consists of a M\"obius band and $m_i$ vertical annuli with $m_i\geq 0$.
Now consider $F_T$ in $T^2\times I$.
Its boundary $\partial F_T$ is the union of $2m_1+1$ parallel curves on $T^2\times\{0\}$ and $2m_2+1$ parallel curves on $T^2\times\{1\}$.
Since $T^2\times\{0\}$ and $T^2\times\{1\}$ are identified with $\partial N_1$ and $\partial N_2$ respectively, we may assume that the $2m_1+1$ curves on $T^2\times\{0\}$ project to $1/0$-curves on $T^2$, and the $2m_2+1$ curves on $T^2\times\{1\}$ project to $a/b$-curves.
Moreover, we may assume that among all components of $F_T$ there is no $\partial$-parallel annulus, for otherwise we can eliminate it by pushing it into $N_1$ or $N_2$.
Then according to Corollary \ref{cor:thickened-torus}, there can only be 2 cases:
\begin{enumerate}
	\item $b=0$, $F_T$ is a union of several vertical annuli, and $F$ is a union of several tori and a Klein bottle, with $||[F]||_{\mathbb{Z}_2}=-\chi(F)=0$;
	\item $b$ is a nonzero even integer, $F_T$ is a non-orientable surface with two boundary components, and $F$ is a closed non-orientable surface of genus $N(b,a)+2$, with $||[F]||_{\mathbb{Z}_2}=N(b,a)$.
\end{enumerate}
Conversely, if $b$ is even, we can construct a $\mathbb{Z}_2$-taut, incompressible, non-orientable surface $F_{b/a}$ with $[F_{b/a}]\cdot[\mu_1]=1$:
\begin{enumerate}
	\item if $b=0$, $F_{b/a}$ can be taken as a Klein bottle, which is a union of two M\"obius bands in $N_1,N_2$ of type (4) in Theorem \ref{thm:twisted-bundle};
	\item if $b$ is a nonzero even integer, $F_{b/a}$ can be taken as a non-orientable surface of genus $N(b,a)+2$, which is a union of two M\"obius bands in $N_1',N_2$ of type (4) in Theorem \ref{thm:twisted-bundle} and a non-orientable surface of genus $N(b,a)$ in $T^2\times I$ bounded by a $1/0$-curve on $T^2\times\{0\}$ and a $b/a$-curve on $T^2\times\{1\}$.
\end{enumerate}

\begin{thm}
	\label{thm:semi-bundle}
	For $f:\partial N_2\xrightarrow{\cong} \partial N_1$ with matrix 
	$A_f= \left(
	\begin{matrix}
		a & c \\
		b & d
	\end{matrix}
	\right)\in GL(2,\mathbb{Z})$, 
	$H_2(N_f;\mathbb{Z}_2)$ is
	\begin{itemize}
		\item $\mathbb{Z}_2\langle[K_1]\rangle \oplus \mathbb{Z}_2\langle[K_2]\rangle$, if $b$ is odd;
		
		\item $\mathbb{Z}_2\langle[K_1]\rangle \oplus \mathbb{Z}_2\langle[K_2]\rangle \oplus \mathbb{Z}_2\langle[F_{b/a}]\rangle$, if $b$ is even. 
	\end{itemize}
	If $b$ is odd or $b=0$, the $\mathbb{Z}_2$-Thurston norms are all zero.
	If $b$ is even and $b\neq 0$, then for any $\alpha\in\mathbb{Z}_2\langle[K_1]\rangle \oplus \mathbb{Z}_2\langle[K_2]\rangle$, $||\alpha||_{\mathbb{Z}_2}=0$ and $||[F_{b/a}]||_{\mathbb{Z}_2}=||[F_{b/a}]+\alpha||_{\mathbb{Z}_2}=N(b,a)$.
	The $\mathbb{Z}_2$-Thurston norms of nontrivial elements are realized by $K_1, K_2, K_1\cup K_2, F_{b/a}$ and their sums.
\end{thm}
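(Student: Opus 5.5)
The plan is to assemble the theorem from three ingredients: the homology computation displayed just before the statement, the case analysis of a $\mathbb{Z}_2$-taut incompressible surface $F$ with $[F]\cdot[\mu_1]=[F]\cdot[\mu_2]=1$, and a surgery argument in the style of Proposition \ref{prop:sum}.

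\textbf{Generators of $H_2(N_f;\mathbb{Z}_2)$.} The intersection pairing identifies $H_2(N_f;\mathbb{Z}_2)$ with the dual of $H_1(N_f;\mathbb{Z}_2)$.

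When $b$ is odd, $H_1(N_f;\mathbb{Z}_2)$ is spanned by $[\alpha_1],[\alpha_2]$. We have $[K_i]\cdot[\alpha_i]=1$, and $[K_i]\cdot[\alpha_j]=0$ for $i\neq j$ since $K_i\subset {\rm Int}(N_i)$ is disjoint from $\alpha_j$. So $[K_1],[K_2]$ form the dual basis.

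When $b$ is even, $H_1(N_f;\mathbb{Z}_2)$ is spanned by $[\alpha_1],[\alpha_2],[\mu_1]$. Here $K_i\cdot\mu_1=0$ because $\mu_1\subset\partial N_1$ can be pushed off $K_1$, and $\mu_1=\mu_2$ in homology takes care of $K_2$. The constructed surface $F_{b/a}$ has $[F_{b/a}]\cdot[\mu_1]=1$. Its pairings with $[\alpha_1],[\alpha_2]$ may be nonzero, but we can correct them by adding $[K_i]$'s. The matrix is then triangular, so $\{[K_1],[K_2],[F_{b/a}]\}$ is a basis.

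\textbf{Norms.}
\begin{itemize}
\item Every class in $\mathbb{Z}_2\langle[K_1]\rangle\oplus\mathbb{Z}_2\langle[K_2]\rangle$ is represented by a disjoint union of Klein bottles ($K_1$, $K_2$, or $K_1\cup K_2$), so its norm is $0$. This already settles the case $b$ odd.
\item When $b=0$, $F_{b/a}$ is a Klein bottle, so $[F_{b/a}]$ has norm $0$.
\item For the classes $[F_{b/a}]+\alpha$ with $b=0$, take the sum of $F_{b/a}$ with $K_1$ and/or $K_2$, as defined before Proposition \ref{prop:sum}. The sum needs each intersection curve to have a solid-torus neighborhood, which holds since $N_f$ is orientable. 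It preserves $\chi=0$. Sphere components cannot arise, because there is no $\mathbb{R}P^2$ and spheres only appear if a piece had positive Euler characteristic.
\end{itemize}

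\textbf{The case $b\neq0$ even.} Any class $\beta$ with $\beta\cdot[\mu_1]=1$ is represented by a $\mathbb{Z}_2$-taut incompressible $F$. There are no $\mathbb{R}P^2$'s and $N_f$ is irreducible, so we may drop sphere components and keep $\chi_-$ unchanged.

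The argument preceding the theorem applies verbatim to any such $F$. It never used which of the four classes $F$ represents, only that $[F]\cdot[\mu_i]=1$. It shows that $F$ has at most toroidal/Klein-bottle components plus one closed non-orientable surface of genus $N(b,a)+2$, using Theorem \ref{thm:twisted-bundle} and Corollary \ref{cor:thickened-torus}. So $\|\beta\|_{\mathbb{Z}_2}\ge N(b,a)$.

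For the upper bound, use $F_{b/a}$, and for $[F_{b/a}]+\alpha$ use the sum of $F_{b/a}$ with the appropriate $K_i$. The sum keeps $\chi=-N(b,a)$, so the norm is exactly $N(b,a)$ on all four classes.

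\textbf{Main obstacle.} The delicate step is ruling out sphere components in the sums $F_{b/a}+K_i$. I would isotope $K_i$ and $F_{b/a}$ so that they meet in essential curves; in $N_i$ the Möbius band of type (4) and $K_i$ can be arranged to meet in a single core-type curve. Then every piece of each surface cut along the intersection curves has $\chi\le 0$, which prevents spheres. If this arrangement proves awkward, it suffices to use the lower-bound argument together with the inequality $\|\beta+[K_i]\|_{\mathbb{Z}_2}\le\|\beta\|_{\mathbb{Z}_2}+\|[K_i]\|_{\mathbb{Z}_2}$. The intended route to that inequality is again the sum construction, but taking the components of $F_{b/a}+K_i$ with $\chi>0$ and noting that they are spheres; discarding them only lowers $\chi_-$.
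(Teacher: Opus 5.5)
Your main line is correct and is essentially the paper's argument. It has the same three ingredients:
\begin{enumerate}
\item the dual basis under the intersection pairing;
\item the structural case analysis preceding the theorem, for the lower bound;
\item a sum with $K_1,K_2$ in the spirit of Proposition~\ref{prop:sum}. Spheres are excluded by placing $K_i$ so that it meets the type~(4) M\"obius band of $F_{b/a}$ exactly in its core $\alpha_i$. Then every piece of either surface, cut along the intersection, has $\chi\le 0$.
\end{enumerate}

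The one genuine difference is how you handle the classes $[F_{b/a}]+\alpha$.
\begin{itemize}
\item You apply the case analysis directly to a $\mathbb{Z}_2$-taut representative of each such class. This is legitimate, because the analysis only uses $[F]\cdot[\mu_1]=[F]\cdot[\mu_2]=1$, and $[K_i]\cdot[\mu_j]=0$.
\item Since that representative is taut, its $-\chi$ is the norm. So the analysis gives $\|[F_{b/a}]+\alpha\|_{\mathbb{Z}_2}=N(b,a)$ outright, not just ``$\ge$''.
\item The paper instead proves $\|[F_{b/a}]\|_{\mathbb{Z}_2}=\|[F_{b/a}]+\alpha\|_{\mathbb{Z}_2}$ by surgery in both directions. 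The reverse direction implicitly requires controlling how an arbitrary taut representative of $[F_{b/a}]+\alpha$ meets $K_i$, which amounts to the same structural analysis anyway.
\end{itemize}
In your route, the sum construction is needed only for the final claim that $K_1,K_2,K_1\cup K_2,F_{b/a}$ and their sums realize the norms. That claim is exactly what your core-intersection arrangement provides.

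Drop the fallback in your last paragraph; its reasoning is wrong.
\begin{itemize}
\item A sphere contributes $\max\{0,-2\}=0$ to $\chi_-$, so discarding sphere components lowers nothing.
\item The real effect goes the other way. If $F_{b/a}+K_i$ had $s$ sphere components, the remaining components would have total Euler characteristic $\chi(F_{b/a})+\chi(K_i)-2s=-N(b,a)-2s$. There is no $\mathbb{R}P^2$, so $\chi_-(F_{b/a}+K_i)=N(b,a)+2s$.
\item Hence that route yields no inequality of the form $\|\beta+[K_i]\|_{\mathbb{Z}_2}\le\|\beta\|_{\mathbb{Z}_2}+\|[K_i]\|_{\mathbb{Z}_2}$.
\end{itemize}
You do not need it: the norm values follow from the case analysis alone, and the realization statement follows from the core-intersection placement.
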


\begin{proof}
	We only need to show that $||[F_{b/a}]||_{\mathbb{Z}_2}=||[F_{b/a}]+\alpha||_{\mathbb{Z}_2}$ for $\alpha\in\mathbb{Z}_2\langle[K_1]\rangle \oplus \mathbb{Z}_2\langle[K_2]\rangle$.
	The proof is similar to that of Proposition \ref{prop:sum}.
\end{proof}

\section{Embeddability of non-orientable closed surfaces}\label{sect:embeddability}

In this section we consider the following question: which non-orientable closed surfaces can be embedded in a given 3-manifold $M$?
Once $\Pi_g$ is embedded in $M$, we see that each $\Pi_{g+2k}$ with $k$ any positive integer can be embedded in $M$ by attaching handles to the embedded $\Pi_g$.
Therefore, we only need to find the following two values for $M$:
\begin{itemize}
	\item the \textbf{minimum odd genus}
	\begin{displaymath}
		mog(M)\coloneqq\inf\left\{\{g\geq 1:g\text{ is odd and }\Pi_g\text{ can be embedded in }M\}\cup\{\infty\}\right\};
	\end{displaymath}
	\item the \textbf{minimum even genus} 
	\begin{displaymath}
		meg(M)\coloneqq\inf\left\{\{g\geq 2:g\text{ is even and }\Pi_g\text{ can be embedded in }M\}\cup\{\infty\}\right\}.
	\end{displaymath}
\end{itemize}
If $M$ is non-orientable, then $meg(M)=2$. 
In fact, a regular neighborhood of an orientation-reversing loop is a solid Klein bottle and its boundary is an embedded Klein bottle.
For an orientable closed 3-manifold $M$, it is known that $\min \{mog(M),meg(M)\}<\infty$ if and only if $H_2(M;\mathbb{Z}_2)$ is nontrivial \cite[Proposition 2.2]{BW1969}; $mog(M)<\infty$ if and only if there exists some $\alpha\in H^1(M;\mathbb{Z}_2)$ with $\alpha^3\neq 0\in H^3(M;\mathbb{Z}_2)$.
A similar result is that for a general (not necessarily orientable or compact) 3-manifold $M$, if the Steenrod square $H^1(M;\mathbb{Z}_2)\to H^2(M;\mathbb{Z}_2)$ defined by $\alpha\mapsto\alpha^2$ is trivial, then $mog(M)=\infty$ \cite[Corollary 4.6]{End1992}.
If an embedded surface $F$ realizes $mog(M)$, then $F$ must be incompressible.
According to a famous result of Haken \cite{Haken1961} that an incompressible surface in a triangulated irreducible 3-manifold is isotopic to a normal surface, for irreducible 3-manifolds we can compute $mog(M)$ with an algorithm.
For example, from an input triangulation of $M$, the software Regina (\href{https://regina-normal.github.io/}{https://regina-normal.github.io/}) can solve the coordinate equations of normal surfaces to find all incompressible surfaces.
Also, by the following lemma, for irreducible manifolds there is an algorithm to decide whether $meg(M)$ equals $2$.

\begin{lem}
	\label{lem:incompressible}
	Each embedded Klein bottle in an irreducible orientable 3-manifold is incompressible.
\end{lem}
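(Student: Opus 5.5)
We argue by contradiction: take an embedded Klein bottle $K$ in an irreducible orientable 3-manifold $M$ and suppose it has a nontrivial compressing disk $D$, i.e.\ $\partial D$ is a simple closed curve on $K$ that bounds no disk in $K$.

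The first step is to use orientability of $M$ to restrict $\partial D$. A compressing disk has a product neighborhood $D\times[-1,1]$ meeting $K$ in the annulus $\partial D\times[-1,1]$. So $\partial D$ is two-sided in $K$ with a neighborhood that is an annulus, not a M\"obius band. The essential two-sided simple closed curves on a Klein bottle fall into two isotopy classes. One is the curve that separates $K$ into two M\"obius bands. The other is a nonseparating curve whose complement in $K$ is an annulus. Together with the hypothesis that $\partial D$ bounds no disk in $K$, this leaves two cases.

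In the separating case, compressing $K$ along $D$ replaces the annulus $\partial D\times[-1,1]$ by $D\times\{\pm1\}$. Each M\"obius band is capped off by a disk, giving two disjoint embedded projective planes in $M$. In the nonseparating case, compression turns the annulus complement into a sphere $S$ meeting $D$, and $S$ bounds a ball $B$ by irreducibility. Then $K$ is recovered from $S$ by attaching a tube along an arc, so a regular neighborhood of $K\cup D$ lies in $B\cup N(D)$.

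To finish the separating case, we note that a two-sided projective plane cannot embed in an orientable manifold, and every embedded $\mathbb{R}P^2$ in an orientable 3-manifold is two-sided. So such a projective plane has a neighborhood that is a twisted $I$-bundle, whose boundary is a sphere. Irreducibility then forces $M\cong\mathbb{R}P^3$, or else gives a contradiction. The $\mathbb{R}P^3$ case must be handled directly: we would exclude it by checking that $\mathbb{R}P^3$ contains no Klein bottle at all, which follows from $mog$/$meg$ data for lens spaces in Theorem~\ref{thm:BW1969} ($N(2,1)=1$, so only odd genera embed).

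For the nonseparating case, the tubing picture shows that $K$ is obtained from the sphere $S=\partial B$ by attaching a tube along an arc. If the tube runs outside $B$, the resulting surface is orientable, a torus. If the tube runs inside $B$, the surface would be a Klein bottle inside a ball. We then need a contradiction from $K\subset B\subset\mathbb{R}^3$. This is standard: $\mathbb{R}^3$ has $H_2(\,\cdot\,;\mathbb{Z}_2)=0$, so any closed surface separates it and is therefore two-sided, hence orientable. Alternatively, we could check the orientation of the tube attachment directly.

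The main obstacle is making this nonseparating-case bookkeeping rigorous, namely showing that $K$ lies in a ball. The step I would attempt first is a homological route: $K$ is compressible along the nonseparating curve, so $[K]=[S]=0$ in $H_2(M;\mathbb{Z}_2)$. Then $K$ separates $M$, so it is two-sided; since $M$ is orientable, $K$ would be orientable, which is a contradiction. This argument also covers the separating case cleanly, once we note that compression preserves the $\mathbb{Z}_2$-class and an $\mathbb{R}P^2$ in an irreducible manifold other than $\mathbb{R}P^3$ is impossible.
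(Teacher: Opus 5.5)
Your argument is correct and has the same skeleton as the paper's. You compress $K$ along $D$, obtain either a sphere or two disjoint projective planes, and derive a contradiction with irreducibility or orientability.

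\textbf{Projective-plane case.} Here your route genuinely differs. You use $P_1$ only to conclude $M\cong\mathbb{R}P^3$, and then import Bredon--Wood (Theorem~\ref{thm:BW1969} with $N(2,1)=1$) to exclude Klein bottles in $\mathbb{R}P^3$. This is valid, but it is unnecessary. The paper takes $N(P_1)$ inside $M-P_2$. Then the complementary side of the sphere $\partial N(P_1)$ contains $P_2$, so it cannot be a ball either, and the $\mathbb{R}P^3$ case never arises. Your own homological remark would also dispose of $\mathbb{R}P^3$. There both $P_i$ are one-sided, hence both represent the generator of $H_2(\mathbb{R}P^3;\mathbb{Z}_2)$. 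So $[K]=[P_1]+[P_2]=0$, and $K$ would separate, which is impossible.

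\textbf{Sphere case.} Here you are slightly more careful than the paper. The paper only writes that $K$ bounds $B\cup N(D)$, and omits the possibility $N(D)\subset B$; in that case $K=\partial\,\overline{B-N(D)}$. Both of your arguments cover both possibilities: the inside/outside ball argument, and the homological route ($[K]=[S]=0$, so $K$ separates, is two-sided, hence orientable).

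\textbf{A slip to fix.} The sentence ``every embedded $\mathbb{R}P^2$ in an orientable 3-manifold is two-sided'' is false: such a projective plane is one-sided. One-sidedness is exactly what your next sentence uses, namely the twisted $I$-bundle neighborhood with sphere boundary.
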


\begin{proof}
	Suppose $M$ is an irreducible orientable 3-manifold and $K$ is an embedded Klein bottle in $M$.
	If $K$ is compressible, then we can compress it along a disk $D$ to obtain a surface $K'$, which is either a sphere or a union of two projective planes.
	
	Assume that $K'=P_1\cup P_2$ with $P_1\cong P_2\cong\mathbb{R}P^2$.
	Take a tubular neighborhood $N(P_1)$ of $P_1$ in $M-P_2$.
	Since $M$ is orientable, $\partial N(P_1)$ is a sphere, which divides $M$ into two parts.
	However, each part contains an embedded projective plane and thus is not a 3-ball.
	This is in contradiction to the assumption that $M$ is irreducible. 
	
	Assume that $K'$ is a sphere, which bounds an embedded 3-ball $B$ in $M$.
	Then $K$ bounds the union of $B$ and a tubular neighborhood of $D$. 
	However, this implies that $M$ is non-orientable, a contradiction.
\end{proof}

\begin{rem}
	Suppose that $M$ is orientable and closed.
	If a projective plane is embedded in $M$, then its regular neighborhood is homeomorphic to $\mathbb{R}P^3$ with a 3-ball removed.
	So $mog(M)=1$ if and only if there is a $\mathbb{R}P^3$ summand in the prime decomposition of $M$.
	If a Klein bottle is embedded in $M$, then its regular neighborhood is homeomorphic to the twisted $I$-bundle over the Klein bottle, whose boundary is a torus.
	So $meg(M)=2$ if and only if either there is a component in the JSJ decomposition of $M$ that is homeomorphic to the twisted $I$-bundle over the Klein bottle, or there is an $S^2\times S^1$ summand in the prime decomposition of $M$.
\end{rem}

According to \cite{BW1969, Jaco1970, End1992, Rannard1996}, when $M$ is a lens space, or $M= F\times S^1$ with $F$ being any surface, the values of $mog(M)$ and $meg(M)$ are known.
We turn to the case where $M$ is a torus bundle over $S^1$ (not necessarily orientable) or a torus semi-bundle.

According to \cite[\S 2]{BW1969}, if two embedded surfaces $F_1,F_2$ in a closed 3-manifold $M$ satisfy $[F_1]=[F_2]\in H_2(M;\mathbb{Z})$, then $\chi(F_1)\equiv\chi(F_2)\bmod 2$.
Particularly, any embedded non-orientable closed surface of an odd genus is nontrivial in $H_2(M;\mathbb{Z}_2)$.
Therefore, if the $\mathbb{Z}_2$-Thurston norm on $H_2(M;\mathbb{Z}_2)$ is known, then $mog(M)$ can be determined.
As a consequence, for torus bundles over $S^1$ and torus semi-bundles constructed as in Section \ref{sect:norm}, we have the following results.

\begin{thm}
	\label{thm:mog}
	For $f\in{\rm Aut}(T^2)$, $mog(M_f)$ is $\infty$ if there is no odd number among $l_{1/0},l_{0/1},l_{1/1}$; otherwise $mog(M_f)$ equals
	\begin{displaymath}
		2+\min\{l_{j/k}:l_{j/k} \text{ is odd},\, j/k\in\{1/0,0/1,1/1\}\}.
	\end{displaymath}
\end{thm}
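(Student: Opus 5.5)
Two directions, both resting on Theorem \ref{thm:bundle} and on the fact from \cite[\S 2]{BW1969} that $\chi$ mod 2 is a $\mathbb{Z}_2$-homology invariant, so that an embedded $\Pi_g$ with $g$ odd represents a nonzero class.

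\emph{Upper bound.} Suppose $l_{j/k}$ is odd for some $j/k$. Then $l_{j/k}\geq 1$, so $f$ acts on $T_{j/k}$ as an inversion or a translation. By construction $F_{j/k}$ is then an embedded $\Pi_{l_{j/k}+2}$, of odd genus. Hence $mog(M_f)\leq 2+\min\{l_{j/k}: l_{j/k}\text{ odd}\}$.

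\emph{Lower bound.} Let $\Pi_g\subset M_f$ be embedded with $g$ odd, and put $\beta=[\Pi_g]$, which is nonzero. The plan is to show that $\beta=[F_{j/k}]$ or $[F_{j/k}]+\tau$ for some $j/k$ with $l_{j/k}$ odd. Granting this, Theorem \ref{thm:bundle} gives $g-2=-\chi(\Pi_g)\geq ||\beta||_{\mathbb{Z}_2}=l_{j/k}$.

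To carry this out I would study the parity $\chi(F)\bmod 2$ as a function on $H_2(M_f;\mathbb{Z}_2)$. By Bredon--Wood this is well defined, and it is additive under the sum operation of Proposition \ref{prop:sum}. The point to check is that the sum $F_1+F_2$ is defined for any two incompressible surfaces meeting $T_0$ in essential curves, because their intersection curves have solid-torus neighbourhoods. So the parity is a homomorphism $\phi:H_2(M_f;\mathbb{Z}_2)\to\mathbb{Z}_2$.

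Next I evaluate $\phi$ on generators. We have $\phi(\tau)=0$. For each defined $F_{j/k}$, $\phi([F_{j/k}])\equiv l_{j/k}\bmod 2$: in the rotation case $F_{j/k}$ is a torus or Klein bottle and $l_{j/k}=0$, and otherwise $\chi(F_{j/k})=-l_{j/k}$.

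In every case of Theorem \ref{thm:bundle} except the last, the nonzero classes are $\tau$, $[F_{j/k}]$ and $[F_{j/k}]+\tau$. So a class with $\phi=1$ must be $[F_{j/k}]$ or $[F_{j/k}]+\tau$ with $l_{j/k}$ odd, and we are done.

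The main obstacle is the case $A_f\equiv I\bmod 2$. There the class $[F_{1/0}]+[F_{0/1}]$, possibly plus $\tau$, is the one that should correspond to $T_{1/1}$. I would show $[F_{1/1}]=[F_{1/0}]+[F_{0/1}]$ or that sum plus $\tau$. This follows from the intersection numbers with $\gamma_\lambda,\gamma_\mu$, since $[F_{j/k}]\cdot\gamma_\lambda=k$ and $[F_{j/k}]\cdot\gamma_\mu=j$. Then $\phi$-additivity forces $l_{1/1}\equiv l_{1/0}+l_{0/1}\pmod 2$. Every nonzero class other than $\tau$ is then $[F_{j/k}]$ or $[F_{j/k}]+\tau$ for one of the three $j/k$, and Proposition \ref{prop:Fjk} and Proposition \ref{prop:sum} give its norm as $l_{j/k}$, which completes the lower bound.

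Finally, if no $l_{j/k}$ is odd, then $\phi\equiv 0$, so no odd-genus $\Pi_g$ embeds and $mog(M_f)=\infty$.
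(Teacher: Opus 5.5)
\textbf{Verdict: the conclusion holds and the route is essentially the paper's, but one claim is false (though not load-bearing).} You use the surfaces $F_{j/k}$ for the upper bound. For the lower bound you combine the Bredon--Wood fact that $\chi \bmod 2$ depends only on the $\mathbb{Z}_2$-class with the norm values of Theorem \ref{thm:bundle}. The false claim is that $\phi$ is a homomorphism. It fails when $f$ reverses orientation, and the theorem covers that case.

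\textbf{Why the homomorphism claim fails.} How $F_1$ and $F_2$ meet $T_0$ says nothing about $F_1\cap F_2$. In a non-orientable $M_f$ this intersection can contain orientation-reversing loops, whose neighbourhoods are solid Klein bottles. Around such a loop the normal monodromy preserves both sheets but exchanges the two smoothings of Figure \ref{fig:sum}, so the sum is not defined.

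\textbf{Counterexample.} Take $A_f=A_3=\mathrm{diag}(1,-1)$, so $M_f\cong S^1\times K$.
\begin{itemize}
\item $f$ fixes $1/0$ and $0/1$, so $l_{1/0}=l_{0/1}=0$.
\item $f$ swaps the adjacent vertices $\pm 1/1$ of $T_{1/1}$, so $l_{1/1}=1$.
\end{itemize}
Hence your congruence $l_{1/1}\equiv l_{1/0}+l_{0/1}$ is false here. Concretely, the torus $F_{1/0}$ and the Klein bottle $F_{0/1}$ meet exactly along the orientation-reversing loop $\{x\}\times I/\sim_f$. The sum of their classes is represented, up to $\tau$, by $F_{1/1}\cong\Pi_3$, which has odd $\chi$. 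In general, for $F$ dual to $a$ one has $\chi(F)\equiv\langle a^3+w_1(M)^2a,[M]\rangle$. The defect of additivity is $\langle w_1(M)ab,[M]\rangle$, which vanishes only when $M$ is orientable.

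\textbf{Why the lower bound still holds.} You never need full additivity, only the parity of explicit representatives.
\begin{itemize}
\item Additivity with $\tau$ is genuinely valid. An incompressible $F$ meets a fiber $T_0$ in curves of the two-sided torus $T_0$, which have solid-torus neighbourhoods, so $F+T_0$ exists as in Proposition \ref{prop:sum}.
\item Every nonzero class other than $\tau$ is $[F_{j/k}]$ or $[F_{j/k}]+\tau$. When $A_f\equiv I$, use $F_{1/1}$ for $[F_{1/0}]+[F_{0/1}]$ modulo $\tau$.
\item The parity of such a class is therefore $l_{j/k}\bmod 2$, read off from $F_{j/k}$ or $F_{j/k}+T_0$.
\end{itemize}
So deleting the homomorphism claim and the congruence leaves a complete argument.

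\textbf{A further small gap.} You should note that $g\ge 3$, because $\mathbb{R}P^2$ does not embed in $M_f$. Otherwise the inequality $-\chi(\Pi_g)\ge||\beta||_{\mathbb{Z}_2}$ fails for $g=1$.

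\textbf{The paper's shorter version.} Take a $\mathbb{Z}_2$-taut representative $F$ of $\beta=[\Pi_g]$. Bredon--Wood gives $||\beta||_{\mathbb{Z}_2}=-\chi(F)\equiv g\pmod 2$, so $||\beta||_{\mathbb{Z}_2}$ is odd. By Theorem \ref{thm:bundle} it then equals an odd $l_{j/k}$, with no need to identify classes at all.
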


\begin{thm}
	\label{thm:meg-mog}
	For $f:\partial N_2\xrightarrow{\cong} \partial N_1$ with matrix 
	$A_f= \left(
	\begin{matrix}
		a & c \\
		b & d
	\end{matrix}
	\right)\in GL(2,\mathbb{Z})$, 
	we have $meg(N_f)=2$, and
	\begin{displaymath}
		mog(N_f)=
		\begin{cases}
			N(b,a)+2, & \text{ if } b\equiv 2\bmod 4;\\
			\infty, & \text{ if } b\not\equiv 2\bmod 4.
		\end{cases}
	\end{displaymath}
\end{thm}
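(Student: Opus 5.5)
The argument has two parts: an explicit Klein bottle gives $meg(N_f)=2$, and the parity fact from \cite[\S 2]{BW1969} combined with Theorem \ref{thm:semi-bundle} gives $mog(N_f)$.

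\emph{The even genus.} $N_f$ contains the embedded Klein bottle $K_1$, so $meg(N_f)=2$ immediately, since $2$ is the smallest even genus allowed.

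\emph{The odd genus, reduction.} Let $\Pi_g$ with $g$ odd be embedded in $N_f$. Then $\chi(\Pi_g)=2-g$ is odd, and by the parity fact its class $\beta=[\Pi_g]$ is nontrivial. I will check which classes can carry a surface of odd Euler characteristic. Every class in $\mathbb{Z}_2\langle[K_1]\rangle\oplus\mathbb{Z}_2\langle[K_2]\rangle$ is represented by $K_1$, $K_2$ or $K_1\cup K_2$, all of Euler characteristic $0$. By the parity fact, every surface in such a class therefore has even Euler characteristic. Hence $b$ must be even and $\beta=[F_{b/a}]+\alpha$ for some $\alpha$ in that summand. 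The surface $F_{b/a}$ has genus $N(b,a)+2$, or is a Klein bottle if $b=0$; the sum of $F_{b/a}$ with $K_1,K_2$ (as in Proposition \ref{prop:sum}) preserves Euler characteristic. So every surface in $\beta$ has $\chi\equiv -N(b,a)\bmod 2$, with $N(0,\pm1)=0$. By Theorem \ref{thm:BW1969}, $N(b,a)\equiv b/2\bmod 2$ for $b\neq0$. Thus an odd-genus surface can exist only when $b/2$ is odd, i.e.\ $b\equiv 2\bmod 4$; when $b\equiv0\bmod4$ (including $b=0$) or $b$ is odd, $mog(N_f)=\infty$.

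\emph{The odd genus, value when $b\equiv 2 \bmod 4$.} Here $F_{b/a}$ is a non-orientable surface of odd genus $N(b,a)+2$, so $mog(N_f)\le N(b,a)+2$. For the reverse inequality, take an embedded $\Pi_g$ with $g$ odd. It is connected, non-orientable and not $\mathbb{R}P^2$, because $N_f$ contains no projective plane. So $-\chi(\Pi_g)=g-2\geq \|\beta\|_{\mathbb{Z}_2}=N(b,a)$ by Theorem \ref{thm:semi-bundle}, which gives $g\ge N(b,a)+2$.

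The main obstacle is verifying the mod-$2$ Euler characteristic of each class, in particular that the summed surfaces keep the right parity. This follows from the equality $\chi(F_1+F_2)=\chi(F_1)+\chi(F_2)$ established for the sum construction.
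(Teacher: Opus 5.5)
Your proof is correct and follows the paper's route: the paper also derives this theorem from the Bredon--Wood fact that $\chi \bmod 2$ depends only on the $\mathbb{Z}_2$-homology class, together with Theorem~\ref{thm:semi-bundle}. It uses $K_1$ to get $meg(N_f)=2$, and uses $N(b,a)\equiv b/2 \bmod 2$ to single out the case $b\equiv 2\bmod 4$. You have written out the parity bookkeeping that the paper leaves implicit, namely the representatives $K_1,K_2,K_1\cup K_2$ and the Euler-characteristic-preserving sums with $F_{b/a}$.
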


Now consider $meg(M_f)$ with $f\in{\rm Aut}(T^2)$. 
If $f$ is orientation-reversing, then $M_f$ is non-orientable, and thus $meg(M_f)=2$.
If $f$ is orientation-preserving, by an isotopy we may assume that $f$ fixes a point $x\in T^2$.
Then $\{x\}\times I/\sim_f$ is a simple closed curve in $M_f$, which intersects any $T^2$ fiber at exactly one point.
An embedded $\Pi_4$ in $M_f$ can be constructed by attaching a handle guided by $\{x\}\times I/\sim_f$ to a $T^2$ fiber as in Figure \ref{fig:handle}.
Therefore, $meg(M_f)$ equals either $2$ or $4$.
To determine $meg(M_f)$ with $f$ orientation-preserving, the only task is to determine whether the Klein bottle can be embedded in $M_f$.

\begin{figure}[htbp]
	\centering
	\begin{picture}(282, 100)(0,0)
		\put(0,0){\includegraphics{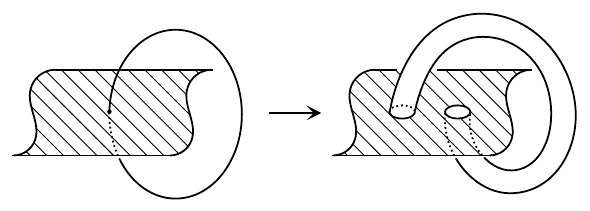}}
		\put(6,13){$T^2$}
		\put(6,82){$\{x\}\times I/\sim_f$}
		\put(160,13){$\Pi_4$}
	\end{picture}
	\caption{\label{fig:handle}
		Attaching a handle along a curve.}
\end{figure}

\begin{thm}
	\label{thm:meg}
	For $f\in{\rm Aut}(T^2)$ with matrix $A_f\in GL(2,\mathbb{Z})$, $meg(M_f)=2$ if $f$ is orientation-reversing or $A_f$ is conjugate to 
	$\left(
	\begin{matrix}
		-1 & 0\\
		n & -1 
	\end{matrix}
	\right)$
	for some $n\in\mathbb{Z}$; otherwise $meg(M_f)=4$.
\end{thm}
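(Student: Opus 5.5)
The orientation-reversing case and the upper bound $meg(M_f)\le 4$ are already handled in the discussion above, so it remains to decide, for orientation-preserving $f$, exactly when a Klein bottle embeds in $M_f$.

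\emph{Sufficiency.} Suppose $A_f=\begin{pmatrix}-1&0\\ n&-1\end{pmatrix}$ (conjugation only changes $M_f$ up to homeomorphism). Then $f(1/0)=-1/n$. This curve class is never $1/0$ unless $n=0$, so we argue with the invariant slope instead. Note $f_*[\mu]=-[\mu]$, so $f$ preserves the slope $0/1$ while reversing the orientation of a $0/1$-curve. Hence $f$ fixes the vertex $0/1$ of $T_{0/1}$ and acts as a rotation there. The construction before Proposition \ref{prop:Fjk} then gives an invariant curve $c$ of slope $0/1$, and $c\times I/\sim_f$ is a Klein bottle, because $f$ reverses the orientation of $c$. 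So $meg(M_f)=2$.

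\emph{Necessity.} Let $K\subset M_f$ be an embedded Klein bottle with $f$ orientation-preserving. By Lemma \ref{lem:incompressible}, $K$ is incompressible. By Lemma \ref{lem:fiber}, we may choose a fiber $T_0$ meeting $K$ in essential curves, minimizing the number of such curves.

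First suppose $K\cap T_0=\emptyset$. Then $K$ is an incompressible closed surface in $T^2\times I$. By Theorem \ref{thm:thickened-torus}, such a surface must be a $\partial$-parallel torus, which contradicts non-orientability.

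So $K\cap T_0\neq\emptyset$. Cutting along $T_0$ gives a surface $P\subset T^2\times I$ with $\chi(P)=0$ and every component incompressible. By Theorem \ref{thm:thickened-torus}, every component has $\chi\le0$, so each component has $\chi=0$. Components of type (4) have $\chi=-d$ with $d\ge1$, so they are excluded. As in the proof of Proposition \ref{prop:Fjk}, we push away $\partial$-parallel annuli. Hence by Corollary \ref{cor:thickened-torus}, $P$ is a union of $m$ parallel vertical annuli $c\times I$. This forces the gluing $f$ to take the slope of $c$ to itself, so $f(p/q)=p/q$ and $A_f$ has eigenvector $(p,q)^T$ with eigenvalue $\pm1$. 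The annuli glue up to $K$; since $K$ is connected, $f$ permutes the $m$ parallel curves cyclically. The glued surface is a torus or a Klein bottle according to whether the total monodromy along the cycle preserves orientation, that is, according to the sign of the eigenvalue. So $K$ being a Klein bottle forces eigenvalue $-1$.

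\emph{Normal form.} Conjugate so that the invariant primitive vector is $(0,1)^T$. Then $A_f=\begin{pmatrix}x&0\\ y&-1\end{pmatrix}$. Since $\det A_f=1$, we get $x=-1$, which is the stated form.

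The step I expect to need the most care is the cyclic-permutation bookkeeping: I must check that when $m>1$ the orientation character of $K$ is exactly the eigenvalue sign. Consistently orienting the parallel curves on $T_0$ shows that $f$ maps each to the next with sign $\pm1$, and this sign equals the eigenvalue. This makes the claim clear.
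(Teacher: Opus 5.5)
Your proof is essentially the paper's: sufficiency via the Klein bottle $\mu\times I/\sim_f$ after isotoping $f$ so that it preserves $\mu$ and reverses its orientation, and necessity via Lemma \ref{lem:incompressible}, Lemma \ref{lem:fiber}, Corollary \ref{cor:thickened-torus} and $\chi(K)=0$ to reduce to parallel vertical annuli with an $f$-invariant slope. You also add two small things the paper omits: you dispose of the case $K\cap T_0=\emptyset$, and you derive the normal form explicitly from $\det A_f=1$.

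One correction to the step you flagged. Going once around the cycle composes $m$ gluing maps, each of degree $\epsilon$ (the eigenvalue), so the orientation character of $K$ is $\epsilon^m$, not $\epsilon$. For example, $f(z,w)=(\bar z,\bar w)$ swaps the curves $\{\pm{\rm i}\}\times S^1$, and the two annuli close up to a torus although $\epsilon=-1$. The implication you actually use still holds: if $\epsilon=1$, every gluing is orientation-compatible, so a Klein bottle forces $\epsilon=-1$ (and $m$ odd).
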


\begin{proof}
	If $A_f$ is conjugate to such a matrix, by an isotopy we may assume that $f(\mu)=\mu^{-1}$, where $\mu^{-1}$ represents $\mu$ with the orientation reversed.
	Then the image of $\mu\times I\subset T^2\times I$ in $M_f=T^2\times I/\sim_f$ is an embedded Klein bottle and thus $meg(M_f)=2$.
	
	Below we assume that $f$ is orientation-preserving and $K$ is an embedded Klein bottle in $M_f$.
	By Lemma \ref{lem:incompressible}, $K$ is incompressible.
	And by Lemma \ref{lem:fiber}, there is a torus fiber $T_0$ of $M_f$ such that $K$ intersects $T_0$ at $m$ essential simple closed curves transversely with $m\geq 1$ minimized.
	Without loss of generality, we may assume that $T_0$ is given by the image of $T^2\times\{0\}$.
	Then $K$ is given by an incompressible surfaces in $T^2 \times I$, denoted by $F$.
	There is no $\partial$-parallel annuli in $F$ for otherwise we can push it to decrease the value of $m$.
	Then according to Corollary \ref{cor:thickened-torus}, $F$ can only be a non-orientable surface with two boundary components or a union of several vertical annuli.
	Moreover, by $\chi(F)=\chi(K)=0$, we see that the former case can not happen, and thus without loss of generality, each component of $F$ is parallel to $\mu \times I$.
	If follows that $f$ takes $[\mu]\in H_1(M_f;\mathbb{Z})$ to $\pm [\mu]$. 
	Since $K$ can be obtained from $F$, we must have $f_*([\mu])=-[\mu]$. 
	Then the matrix of $f$ has the form
	$\left(
	\begin{matrix}
		-1 & 0\\
		n & -1 
	\end{matrix}
	\right)$
	with $n\in\mathbb{Z}$.
\end{proof}

The mapping torus $M_f$ of $f\in{\rm Aut}(T^2)$ admits a Seifert fibering if and only if $M_f$ admits either $\mathbb{E}^3$ geometry or Nil geometry, i.e., $A_f$ is either periodic or conjugate to
$\pm\left(
\begin{matrix}
	1 & 0 \\
	n & 1
\end{matrix}
\right)$
for some positive integer $n$ \cite[Theorem 5.5]{Scott1983}.
For them we give the values of $meg(M_f)$ and $mog(M_f)$ directly.

\begin{lem}[{\cite[Lemma 3.4]{Hempel1975}}]
	\label{lem:periodic}
	Up to conjugacy, there are $7$ periodic matrices in $GL(2,\mathbb{Z})$:
	\begin{displaymath}
		A_1=\left(
		\begin{matrix}
			1 & 0 \\
			0 & 1
		\end{matrix}
		\right),
		A_2=\left(
		\begin{matrix}
			-1 & 0 \\
			0 & -1
		\end{matrix}
		\right),
		A_3=\left(
		\begin{matrix}
			1 & 0 \\
			0 & -1
		\end{matrix}
		\right),
		A_4=\left(
		\begin{matrix}
			1 & 0 \\
			1 & -1
		\end{matrix}
		\right),
	\end{displaymath}
	\begin{displaymath}
		A_5=\left(
		\begin{matrix}
			0 & 1 \\
			-1 & -1
		\end{matrix}
		\right),
		A_6=\left(
		\begin{matrix}
			0 & -1 \\
			1 & 0
		\end{matrix}
		\right),
		A_7=\left(
		\begin{matrix}
			0 & 1 \\
			-1 & 1
		\end{matrix}
		\right),
	\end{displaymath}
	which are of orders $1,2,2,2,3,4,6$ respectively.
\end{lem}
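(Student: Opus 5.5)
The plan is to reduce to conjugacy classes of finite-order elements of $PGL(2,\mathbb{Z})$, or equivalently finite subgroups of $GL(2,\mathbb{Z})$, using the action on the Farey tessellation. A periodic $A\in GL(2,\mathbb{Z})$ generates a finite cyclic group acting by isometries on $\mathbb{H}^2\cup\partial\mathbb{H}^2$ through the Farey graph $\mathcal{C}^{i=1}(T^2)$. Every finite group acting on the Farey tessellation fixes a point of the dual tree (Serre's fixed point theorem for finite groups acting on trees, cf.\ the trichotomy lemma quoted above). So $A$ fixes either a Farey edge $\{p/q,r/s\}$ (the midpoint of a dual edge) or an ideal triangle (a vertex of the dual tree).

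First step, normalization. Using Lemma \ref{lem:transitivity}-type transitivity of $GL(2,\mathbb{Z})$ on Farey edges and triangles, I will conjugate so that the fixed edge is $\{1/0,0/1\}$ or the fixed triangle is $\{1/0,0/1,1/1\}$. In the edge case, $A$ either fixes both endpoints or swaps them. If it fixes both, $A$ is diagonal with entries $\pm1$, giving $A_1$, $A_2$, $A_3$; note $\mathrm{diag}(-1,1)$ is conjugate to $A_3$ by swapping the basis. If it swaps them, $A=\begin{pmatrix}0&\epsilon\\ \delta&0\end{pmatrix}$. These matrices give $A_6$ (and its inverse, which is conjugate to it via $\mathrm{diag}(1,-1)$) or $\pm\begin{pmatrix}0&1\\1&0\end{pmatrix}$. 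Conjugating $\begin{pmatrix}0&1\\1&0\end{pmatrix}$ by $\begin{pmatrix}1&0\\1&1\end{pmatrix}$-type matrices gives $A_4$, and the negative sign is handled the same way, since $-A_4$ is conjugate to $A_4$. In the triangle case, $A$ permutes $\{1/0,0/1,1/1\}$. A transposition fixes a vertex and the opposite edge, so it is already covered by the edge case. A 3-cycle gives, up to the sign $\pm I$, the order-3 element $A_5$ or the order-6 element $A_7=-A_5^{-1}$ up to conjugacy. I will write these matrices down explicitly as the matrices sending $1/0\mapsto0/1\mapsto1/1$ with both sign choices and check determinants.

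Second step, distinctness. I will show the seven classes are pairwise non-conjugate using traces and determinants together with orders: $A_1$ has trace $2$, $A_2$ trace $-2$, $A_5$ trace $-1$, $A_6$ trace $0$ with determinant $1$, and $A_7$ trace $1$, while $A_3$ and $A_4$ have determinant $-1$. For $A_3$ versus $A_4$, I will use reduction mod $2$, where $A_3\equiv I$ but $A_4\not\equiv I$; alternatively, the coinvariants differ, $\mathbb{Z}\oplus\mathbb{Z}_2$ versus $\mathbb{Z}$. The stated orders are read off directly.

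The main obstacle is the bookkeeping in the reflection and sign cases of the first step: ensuring that every sign variant collapses to $A_3$ or $A_4$ and that no extra class appears. I would handle this by classifying order-2 determinant-$(-1)$ matrices directly. Such a matrix has eigenvalues $\pm1$, so $\mathbb{Z}^2$ is a $\mathbb{Z}[C_2]$-lattice of rank 2 with one trivial and one sign summand rationally. There are exactly two such lattices, split and non-split, and these correspond to $A_3$ and $A_4$.
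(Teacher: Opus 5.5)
The paper gives no proof of this lemma; it simply quotes it from Hempel, so there is no internal argument to compare against. Your proposal is a correct, self-contained proof. The case analysis checks out. The swap case gives $A_6^{\pm1}$ and $\pm S$ with $S=\left(\begin{smallmatrix}0&1\\1&0\end{smallmatrix}\right)$, and $-S$ is conjugate to $S$ by $\mathrm{diag}(1,-1)$. The triangle $3$-cycles are exactly $\pm\left(\begin{smallmatrix}0&1\\-1&1\end{smallmatrix}\right)$: the $+$ sign is literally $A_7$, and the $-$ sign is conjugate to $A_5$ by $\mathrm{diag}(1,-1)$. Trace, determinant and reduction mod $2$ separate all seven classes.

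Two bookkeeping points should be stated rather than left implicit. First, in the triangle case the trivial permutation forces $A=\pm I$, which also lies in the edge case. Second, the conjugator taking $S$ to $A_4$ must be chosen correctly: $P=\left(\begin{smallmatrix}1&1\\0&1\end{smallmatrix}\right)$ gives $PSP^{-1}=A_4$. Conjugating by $\left(\begin{smallmatrix}1&0\\1&1\end{smallmatrix}\right)$ instead gives $\left(\begin{smallmatrix}1&1\\0&-1\end{smallmatrix}\right)$, which is conjugate to $A_4$ but not equal to it.

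Compared with the usual algebraic proof, your route is genuinely different. The algebraic proof observes that a periodic $A$ has eigenvalues that are roots of unity of degree at most $2$. Hence its characteristic polynomial is one of $(x\mp1)^2$, $x^2-1$, $x^2+x+1$, $x^2+1$, $x^2-x+1$, which gives the orders $1,2,3,4,6$ at once. Diagonalizability forces $A=\pm I$ in the repeated-root cases. The irreducible cases each give a single class, by Latimer--MacDuffee and the fact that $\mathbb{Z}[\omega]$ and $\mathbb{Z}[i]$ are PIDs. The case $x^2-1$ splits into the split and non-split $\mathbb{Z}[C_2]$-lattices, which is exactly your fallback argument.

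Your argument instead replaces the number theory with the fixed-point property of finite groups acting on the dual tree of the Farey tessellation. In effect you show that every finite subgroup of $GL(2,\mathbb{Z})$ is conjugate into the stabilizer of the edge $\{1/0,0/1\}$ (dihedral of order $8$) or of the triangle $\{1/0,0/1,1/1\}$ (dihedral of order $12$), and then read off the cyclic subgroups. This fits the paper's curve-complex viewpoint and produces explicit conjugators. The algebraic route gets orders and most of the distinctness for free from the characteristic polynomial. Both routes still need a separate invariant, such as mod $2$ reduction or coinvariants, to distinguish $A_3$ from $A_4$.
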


\begin{cor}
	Suppose that the matrix $A_f$ of $f\in{\rm Aut}(T^2)$ is periodic.
	
	(1) If $A_f$ is of period $2$, then $meg(m_f)=2$; otherwise $meg(M_f)=4$.
	
	(2) If $A_f$ is conjugate to the matrix $A_3$ or $A_6$ in Lemma \ref{lem:periodic}, then $mog(M_f)=3$; otherwise $mog(M_f)=\infty$.
\end{cor}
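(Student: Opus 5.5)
Since $mog(M_f)$ and $meg(M_f)$ are homeomorphism invariants, and conjugating $A_f$ in $GL(2,\mathbb{Z})$ does not change $M_f$ up to homeomorphism, I will assume that $A_f$ is exactly one of $A_1,\dots,A_7$ from Lemma \ref{lem:periodic}. Part (1) then follows from Theorem \ref{thm:meg}, and part (2) from Theorem \ref{thm:mog}. For part (2) the only work is to compute $l_{1/0},l_{0/1},l_{1/1}$ for each of the seven matrices, using Proposition \ref{prop:l} directly.

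For (1), note that $A_3$ and $A_4$ have determinant $-1$, so $f$ reverses orientation and $meg(M_f)=2$. The matrix $A_2=-I$ already has the form $\left(\begin{matrix}-1&0\\ n&-1\end{matrix}\right)$ with $n=0$, so again $meg(M_f)=2$. The remaining matrices $A_1,A_5,A_6,A_7$ have determinant $1$ and traces $2,-1,0,1$. Every matrix of the form $\left(\begin{matrix}-1&0\\ n&-1\end{matrix}\right)$ has trace $-2$, and trace is a conjugacy invariant, so none of these four is conjugate to such a matrix. Theorem \ref{thm:meg} then gives $meg(M_f)=4$.

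For (2), I first reduce each matrix mod $2$ to see which trees $T_{j/k}$ are $f$-invariant; any non-invariant tree has $l_{j/k}=\infty$. Then, for each invariant tree, I take $v\in\{1/0,0/1,1/1\}$ and apply Proposition \ref{prop:l}. The seven cases go as follows.
\begin{itemize}
\item $A_1$ and $A_2$ act trivially on slopes. Every $f$ is a rotation, so all $l_{j/k}=0$.
\item $A_3=\mathrm{diag}(1,-1)$ is the identity mod $2$. It fixes the slopes $1/0$ and $0/1$, so $l_{1/0}=l_{0/1}=0$. It sends $1/1$ to $-1/1$; since $i(1/1,-1/1)=2$ these are adjacent, and $f^2=\mathrm{id}$, so Proposition \ref{prop:l}(2) gives $l_{1/1}=1$.
\item $A_4$ is $\left(\begin{matrix}1&0\\1&1\end{matrix}\right)$ mod $2$, so only $T_{0/1}$ is invariant. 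Since $f(0/1)=0/1$, we get $l_{0/1}=0$, and the other two values are $\infty$.
\item $A_5$ and $A_7$ reduce mod $2$ to matrices permuting the three trees cyclically, so all $l_{j/k}=\infty$.
\item $A_6$ is $\left(\begin{matrix}0&1\\1&0\end{matrix}\right)$ mod $2$, so only $T_{1/1}$ is invariant. We have $f(1/1)=-1/1$, at distance $1$, and $f^2=-I$ fixes every slope, so $l_{1/1}=1$.
\end{itemize}
Theorem \ref{thm:mog} therefore gives $mog(M_f)=2+1=3$ for $A_3$ and $A_6$, and $mog(M_f)=\infty$ in the other five cases, since none of them has an odd $l_{j/k}$.

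The only step needing care is the correct application of Proposition \ref{prop:l} in the period-$2$ and period-$4$ cases, where $f^2$ fixes the chosen vertex. There one must check that $d(v,f(v))=1$, i.e.\ that $i(1/1,-1/1)=2$, so that $f$ acts as an inversion and not as a rotation. The rest is bookkeeping.
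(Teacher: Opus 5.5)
Your proposal is correct and follows essentially the same route as the paper. The paper also reduces to $A_1,\dots,A_7$, obtains (1) from Theorem \ref{thm:meg} via the determinant and the observation that $\mathrm{tr}A_k\neq -2$ for $k=1,5,6,7$, and obtains (2) from Theorem \ref{thm:mog}. The only difference is that you compute the $l_{j/k}$ directly with Proposition \ref{prop:l}, whereas the paper cites the closed formulas of Corollary \ref{cor:l}, which are themselves derived from that proposition; your values agree with those formulas.
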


\begin{proof}
	Up to conjugacy, we may assume that $A_f$ equals some $A_k$ in Lemma \ref{lem:periodic} with $1\leq k\leq 7$.
	Then (2) follows from Theorem \ref{thm:mog} and Corollary \ref{cor:l}.
	And (1) follows from Theorem \ref{thm:meg}.
	Note that for $k=1,5,6,7$, we have ${\rm tr}A_k\neq -2$ and thus $A_k$ is not conjugate to  
	$\left(
	\begin{matrix}
		-1 & 0\\
		n & -1 
	\end{matrix}
	\right)$
	for any $n\in\mathbb{Z}$.
\end{proof}

Similarly, for any Nil $M_f$, we have the following conclusion.

\begin{prop}
	Suppose that $n$ is a positive integer and $B_n,B_n'$ are the following matrices:
	\begin{displaymath}
		B_n=\left(
		\begin{matrix}
			1 & 0 \\
			n & 1
		\end{matrix}
		\right),\,
		B_n'=
		\left(
		\begin{matrix}
			-1 & 0 \\
			n & -1
		\end{matrix}
		\right).
	\end{displaymath}
	If the matrix $A_f$ of $f\in{\rm Aut}(T^2)$ is conjugate to $B_n$, then $meg(M_f)=4$; if $A_f$ of $f\in{\rm Aut}(T^2)$ is conjugate to $B_n'$, then $meg(M_f)=2$.
	In either case, $mog(M_f)$ equals $\infty$ if $n\equiv 0,1,3\bmod 4$, and equals $n/2+2$ if $n\equiv 2\bmod 4$.
\end{prop}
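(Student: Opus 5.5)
The plan is to treat the two invariants separately: $meg(M_f)$ comes from Theorem \ref{thm:meg}, and $mog(M_f)$ comes from Theorem \ref{thm:mog} combined with the explicit formulas of Corollary \ref{cor:l}. Since $M_f$ depends only on the conjugacy class of $A_f$ (up to inversion), we may take $A_f=B_n$ or $A_f=B_n'$ directly.

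For $meg$: both $B_n$ and $B_n'$ have determinant $1$, so $f$ is orientation-preserving. For $B_n'$ we are exactly in the case singled out in Theorem \ref{thm:meg}, so $meg(M_f)=2$. For $B_n$ we have ${\rm tr}\,B_n=2\neq -2={\rm tr}\begin{pmatrix}-1&0\\ m&-1\end{pmatrix}$, so $B_n$ is not conjugate to any such matrix, and Theorem \ref{thm:meg} gives $meg(M_f)=4$.

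For $mog$, I will compute $l_{1/0},l_{0/1},l_{1/1}$ from Corollary \ref{cor:l}. I will repeatedly use that for an even integer $p\neq 0$ and $q=\pm1$, the continued fraction of $|p|/1$ is $[|p|]$, so $N(p,\pm1)=|p|/2$.

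Take $A_f=B_n$, so $a=d=1$, $c=0$, $b=n$.
\begin{itemize}
\item Since $c=0$ gives $c(a+d)=0$ and $c/2=0$ is even, we get $l_{0/1}=0$.
\item If $n$ is odd, then $b$ is odd and $b+d$ is even, so $l_{1/0}=l_{1/1}=\infty$.
\item If $n$ is even, then $b(a+d)=2n\neq0$ and $a^2+bc=1$. Hence $l_{1/0}=N(2n,1)-N(n,1)=n-n/2=n/2$.
\item If $n$ is even, then $(b-a)(a+c)+(d-c)(b+d)=(n-1)+(n+1)=2n$ and $a(a+c)+c(b+d)=1$. Also $b+d-a-c=n$ and $a+c=1$. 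Hence $l_{1/1}=N(2n,1)-N(n,1)=n/2$.
\end{itemize}

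Take $A_f=B_n'$, so $a=d=-1$, $c=0$, $b=n$. The same computation with the signs tracked goes as follows.
\begin{itemize}
\item Again $l_{0/1}=0$, and if $n$ is odd then $l_{1/0}=l_{1/1}=\infty$.
\item If $n$ is even, then $b(a+d)=-2n$ and $a^2+bc=1$. Hence $l_{1/0}=N(-2n,1)-N(n,-1)=n/2$.
\item If $n$ is even, the quantities for $l_{1/1}$ are $-2n$ and $1$, together with $b+d-a-c=n$ and $a+c=-1$. Hence $l_{1/1}=N(-2n,1)-N(n,-1)=n/2$.
\end{itemize}
Here I rely on $N(p,q)$ depending only on $|p|/|q|$ by its definition.

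So in both cases the finite values among $l_{1/0},l_{0/1},l_{1/1}$ are $0$, together with $n/2$ when $n$ is even. An odd value exists exactly when $n$ is even and $n/2$ is odd, i.e. when $n\equiv 2\bmod 4$. Theorem \ref{thm:mog} then gives $mog(M_f)=n/2+2$ in that case and $\infty$ otherwise.

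The only delicate point is the careful bookkeeping of signs and parities in Corollary \ref{cor:l}. In particular one must check that the nonzero branch applies, which holds because $2n\neq0$ for $n>0$. One must also check the degenerate continued fraction case $[|p|]$ with $|p|\ge 2$, which is allowed in the definition of $N$.
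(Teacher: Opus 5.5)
Your proposal is correct and follows the same route the paper intends (the paper says only ``Similarly'', referring to the proof of the periodic-case corollary). As there, $meg$ comes from Theorem~\ref{thm:meg} via the trace obstruction, and $mog$ comes from Theorem~\ref{thm:mog} with the values of Corollary~\ref{cor:l}. Your values $l_{0/1}=0$ and $l_{1/0}=l_{1/1}=n/2$ for even $n$ (and $\infty$ for odd $n$) are correct in both the $B_n$ and $B_n'$ cases.
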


\bibliographystyle{alpha}

\end{document}